\numberwithin{equation}{section}
\newcommand{\Cth}{\mathcal{C}_{H}}
\newcommand{\Fc}{\mathcal{F}}
\newcommand{\D}{\mathbb{D}}
\newcommand{\E}{\mathbb{E}}
\newcommand{\N}{\mathbb{N}}
\newcommand{\Pb}{\mathbb{P}}
\newcommand{\R}{\mathbb{R}}
\newcommand{\Hg}{\mathfrak{H}}
\newcommand{\Sf}{\mathscr{S}}
\newcommand{\wt}{\widetilde}
\newcommand{\lettrequivabien}{\lambda}
\newcommand{\processsymb}{;}
\newcommand{\Norm}[1]{\left\lVert#1\right\rVert}
\newcommand{\Abs}[1]{\left|#1\right|}
\newcommand{\Indi}[1]{\mathbbm{1}_{#1}}
\newcounter{dummy} \numberwithin{dummy}{section}
\newtheorem{Proposition}[dummy]{Proposition}
\newtheorem{Theorem}[dummy]{Theorem}
\newtheorem{Lemma}[dummy]{Lemma}
\newtheorem{Remark}[dummy]{Remark}
\def\RR{\mathbb R}
\def\R{\mathbb R}
\def\E{\mathbb E}
\def\ep{\varepsilon}
\numberwithin{equation}{section}
\begin{document}
\title[Limit theorems for additive functionals of the fractional Brownian motion]{Limit theorems for additive functionals of the fractional Brownian motion}
\author{Arturo Jaramillo, Ivan Nourdin, David Nualart, Giovanni Peccati}

\address{Arturo Jaramillo: Departamento de Probabilidad, Centro de Investigaci\'on en Matem\'aticas.\\
Jalisco S/N, Col. Valenciana CP: 36023 Guanajuato, Gto, M\'exico.}
\email{jagil@cimat.mx}

\address{Ivan Nourdin: Department of Mathematics, Universit\'e du Luxembourg, Maison du Nombre 6, Avenue de la Fonte, L-4364 Esch-sur-Alzette, Luxembourg.}
\email{ivan.nourdin@uni.lu}

\address{David Nualart: Department of Mathematics, University of Kansas, 1450 Jayhawk Blvd., Lawrence, KS 66045. USA.}
\email{nualart@ku.edu}   

\address{Giovanni Peccati: Department of Mathematics, Universit\'e du Luxembourg, Maison du Nombre 6, Avenue de la Fonte, L-4364 Esch-sur-Alzette, Luxembourg.}
\email{giovanni.peccati@uni.lu}

\keywords{Local times, additive functionals, limit theorems, fractional Brownian motion, Clark-Ocone formula}
\date{\today}
\subjclass[2010]{62E17,60F05,60G22, 60J55, 60H07}

\begin{abstract}
We investigate first and second order fluctuations of additive functionals of a fractional Brownian motion (fBm) of the form 
\begin{align}\label{eq:abstractmain}
\left\{\int_{0}^{t}f(n^{H}(B_{s}-\lambda))ds\ ; t\geq 0 \right\}
\end{align}
where $B=\{B_{t};  t \geq 0\}$ is a fBm with Hurst parameter $H\in (0,1)$, $f$ is a suitable test function and $\lambda\in \R$. We develop our study by distinguishing two regimes which exhibit different behaviors.  When $H\in(0,1/3)$, we show that a suitable renormalization of \eqref{eq:abstractmain}, compensated by a multiple of the local time of $B$, converges towards a constant multiple of the derivative of the local time of $B$. In contrast, in the case $H\in[1/3,1)$ we show that \eqref{eq:abstractmain} converges towards an independent Brownian motion  subordinated to the local time of $B$. Our results refine and complement those from \cite{JaNoPe}, \cite{Je}, \cite{HuNuXu}, \cite{NuXu} and solve at the same time the critical case $H=1/3$, which had remained open until now.
\end{abstract}

\maketitle

%=======================================================================================================
\section{Introduction}\label{sec:intro}
\subsection{Overview}
Let $B=\{B_{t}; t\geq0\}$ be a fractional Brownian motion with Hurst parameter $H\in(0,1)$ defined on a probability space $(\Omega, \Fc,\Pb)$. The purpose of this manuscript is to study the asymptotic behavior, as $n$ tends to infinity, of the law of the sequence of processes 
\begin{align}\label{eq:plainadditivefunctional}
\left\{\int_{0}^{t}f(n^{H}(B_{s}-\lambda))ds\  ;  t\geq 0 \right\}
\overset{\rm law}{=}
\left\{\frac1n\int_{0}^{nt}f(B_{s}-n^H\lambda)ds\ ; t\geq 0 \right\},
\end{align}
where $\lambda \in \R$ is fixed.
More specifically, we will describe in full generality the behavior of the first and second order fluctuations of \eqref{eq:plainadditivefunctional}. 
An elementary heuristic understanding of the first order asymptotics of \eqref{eq:plainadditivefunctional} can be obtained by making use of the local time of $B$, formally defined by 
\begin{align}\label{e:lt}
L_{t}(\lambda)
  &:=\int_{0}^{t}\delta_{0}(B_{s}-\lambda)ds,
\end{align} 
for $t>0$ and $\lambda\in\R$. The variable $L_{t}(\lambda)$, which represents the time spent by $B$ at level $\lambda$ up to time $t$, can be rigorously defined by replacing the Dirac delta $\delta_{0}$ by a heat kernel of variance $\varepsilon>0$ and then taking limit in $L^{2}(\Omega)$ as $\varepsilon$ goes to zero (see Section \ref{ss:introlt} for more details). By standard algebraic simplifications, we observe that 
\begin{align*}
\int_{0}^{t}f(n^{H}(B_{s}-\lambda))ds
  &=\int_{0}^{t}\int_{\R}\delta_{0}(B_{s}-y)f(n^{H}(y-\lambda))dyds\\
  &=n^{-H}\int_{0}^{t}\int_{\R}\delta_{0}(B_{s}-n^{-H}y-\lambda)f(y)dyds.
\end{align*}
Applying a mollification procedure to the relations above, the term $n^{-H}y$ vanishes after taking the limit in $L^{2}(\Omega)$ as $n$ tends to infinity, leading to 
\begin{align}\label{eq:heuristicintro}
n^H\int_{0}^{t}f(n^{H}(B_{s}-\lambda))ds
  &\stackrel{L^{2}(\Omega)}{\rightarrow}
	\int_{0}^{t}\int_{\R}\delta_{0}(B_{s}-\lambda)f(y)dyds=L_{t}(\lambda)\int_{\R}f(y)dy.
\end{align}
Such convergence in  $L^2(\Omega)$  motivates the study of the associated second order fluctuations. 
In particular, it is a natural problem to determine whether or not we can find a monotone normalization $\{\beta_{n}\}_{n\in\N}$ such that the difference between the left and right-hand sides of \eqref{eq:heuristicintro}, scaled by $\beta_{n}$, converges towards a non-trivial limit. 
Following \cite{WaPh}, we refer to the situation where $\int_{\R}f(y)dy=0$ as the `zero energy case'.

Although important advances have been made in this direction, up to this date this question has only been partially answered in three special cases: (i) when $\lambda=0$ and $B$ is a Brownian motion (see the work by Papanicolaou, Stroock and Varadhan \cite{PaStVar}), (ii) when $\lambda=0$, $\int_{\R}f(y)dy=0$ and $B$ is a fractional Brownian motion of Hurst parameter $H>\frac{1}{3}$ (see the work by Hu, Nualart and Xu \cite{HuNuXu} and by Nualart and Xu \cite{NuXu},  as well as \cite{Je} for some earlier findings that can be roughly compared to \cite{HuNuXu, NuXu}) and  (iii) when $\lambda = 0 = \int_{\R}f(y)dy$, $B$ is a fractional Brownian motion of Hurst parameter $H<\frac{1}{3}$ and the integral in \eqref{eq:plainadditivefunctional} is replaced by a sum (see \cite{JaNoPe}). 
In particular, in \cite{HuNuXu} and the subsequent follow-up paper \cite{NuXu}, the following result was proved by means of the method of moments. 
\begin{Theorem}\label{thm:nualart}
Suppose that $H>\frac{1}{3}$ and $f:\R\rightarrow\R$ satisfies 
$\int_{\R}|f(y)||y|^{\frac{1}{H}-1}dy<\infty$ as well as the the zero energy condition $\int_{\R}f(y)dy=0$. Then, 
\begin{align*}
\left\{n^{\frac{H+1}{2}}\int_{0}^{t}f(n^{H}B_{s})ds\ ;\ t\geq0 \right\}
  &\stackrel{Law}{\rightarrow}\{C_{f}W_{L_{t}(0)}\ ;\ t\geq0\},
\end{align*}
where $C_{f}>0$ is a non-zero constant depending on $f$ and $W$ is a standard Brownian motion independent of $B$. The convergence takes place in the topology of uniform convergence over compact sets.
\end{Theorem}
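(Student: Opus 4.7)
The plan is to use the method of moments, following the Fourier-analytic approach of \cite{HuNuXu, NuXu}. By the self-similarity of fBm, $\{n^{H}B_{s}\}_{s\geq 0} \stackrel{\rm law}{=} \{B_{ns}\}_{s\geq 0}$, so the process of interest is equidistributed with
$$X_{n}(t) := n^{(H-1)/2}\int_{0}^{nt} f(B_{u})\,du,$$
which is the customary scaling in the literature. The first step would be to invoke the Fourier inversion formula and write
$$X_{n}(t) = \frac{n^{(H-1)/2}}{2\pi}\int_{\R} \hat{f}(\xi)\int_{0}^{nt}e^{i\xi B_{u}}\,du\,d\xi,$$
thereby reducing every moment computation to a deterministic multi-dimensional integral against a Gaussian characteristic function of the form $\exp(-\tfrac{1}{2}\xi^{\top}R\xi)$, where $R$ is the covariance of the corresponding fBm values.

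Next, to prove convergence of finite-dimensional distributions, I would compute the limit of $\E[\prod_{j} X_{n}(t_{j})^{m_{j}}]$ and match it to $\E[\prod_{j}(C_{f}W_{L_{t_{j}}(0)})^{m_{j}}]$. Since $W$ is independent of $B$, conditioning on $B$ reveals that the target moments are those of centered Gaussians with (random) variance $C_{f}^{2}L_{t_{j}}(0)$; it therefore suffices to check that all \emph{odd}-order conditional moments vanish asymptotically, while the $(2k)$-th even moment converges to $C_{f}^{2k}(2k-1)!!\,\E[L_{t_{j}}(0)^{k}]$. The actual moment computation proceeds by rescaling $u_{i}=nv_{i}$ and analysing the resulting $2m$-fold integrals; the dominant contribution comes from configurations in which the integration variables cluster into well-separated pairs, the inner pair-separation producing the constant $C_{f}$ (an $L^{2}$-type quantity involving $\hat{f}$) and the outer, slowly varying scale producing the local time $L_{t}(0)$ through the occupation density of $B$ at the origin.

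The main obstacle lies precisely in controlling these $2m$-fold integrals: one must prove that pairings different from the ``diagonal'' ones give a negligible contribution as $n\to\infty$, and that the nonlocal fBm covariance $R(u,v) = \tfrac{1}{2}(u^{2H}+v^{2H}-|u-v|^{2H})$ does not destroy the asymptotic Gaussian-plus-independent-increment structure. This is exactly where the threshold $H>1/3$ becomes critical: it is the minimal regularity that keeps the relevant off-diagonal singularities integrable at the correct scale, so that cross-terms do not blow up. Simultaneously, the hypothesis $\int_{\R}|f(y)||y|^{1/H-1}dy<\infty$ translates via Fourier considerations into sufficient decay of $\hat{f}$ near infinity to make the constant $C_{f}$ finite and the integrals in question absolutely convergent.

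Finally, tightness in the topology of uniform convergence on compacts would follow from a Kolmogorov-type criterion $\E[|X_{n}(t)-X_{n}(s)|^{2m}]\leq C|t-s|^{1+\delta}$ for sufficiently large $m$, which is obtained as a byproduct of the same moment analysis applied to increments of $X_{n}$ over subintervals $[s,t]$. Combined with the fdd convergence, this yields weak convergence to $\{C_{f}W_{L_{t}(0)}\}_{t\geq 0}$ as claimed.
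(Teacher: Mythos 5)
Your plan is not the route of this paper at all, but it is essentially the route of the papers the statement is quoted from: the present article never proves Theorem \ref{thm:nualart} — it is cited as a known result of \cite{HuNuXu} and \cite{NuXu}, where it was indeed established by the method of moments you outline (Fourier inversion, rescaling $u=nv$, pairing analysis of the $2m$-fold Gaussian integrals). What this paper does instead is prove the more general Theorem \ref{thm:main}, which contains your statement as the special case $\lambda=0$, $d=1$, $\int_\R f=0$ (the compensating term $n^{-H}L_t(\lambda)\int f$ then vanishes), by a genuinely different mechanism: the Clark--Ocone formula yields a stochastic integral representation of the local time, the compensated additive functional is written as a martingale $\int_0^t G^{(f,n)}_{r,t}dW_r$ plus a remainder shown to be uniformly negligible, and the mixed Gaussian limit comes from the asymptotic Knight theorem after verifying that $\langle M^{(n)}\rangle_u$ converges to $\mathcal{A}_H[f,f]\,L_{t\wedge u}(\lambda)$ and that $\langle M^{(n)},W\rangle_u\to 0$ (conditions \textbf{(A1)}--\textbf{(A2)}). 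The martingale route buys what the moment method does not easily give: arbitrary levels $\lambda$, vector-valued test functions without the zero-energy assumption, and, crucially, the critical case $H=\tfrac13$ with its logarithmic normalization; your route is self-contained Fourier analysis but is tied to $\lambda=0$, $\int f=0$, and $H>\tfrac13$.

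As a proof, your proposal also leaves the decisive work undone: showing that non-paired configurations in the $2m$-fold integrals are negligible and that the paired ones factorize into $C_f^{2k}(2k-1)!!\,\E[L_t(0)^k]$ requires the local nondeterminism property of fBm and delicate singular-integral estimates — this is the bulk of \cite{HuNuXu,NuXu} and cannot be taken for granted; the same applies to the moment bounds needed for tightness. One further slip: the hypothesis $\int_\R|f(y)||y|^{1/H-1}dy<\infty$, combined with $\int_\R f=0$, controls $\hat f$ \emph{near the origin} (it gives $|\hat f(\xi)|\le C|\xi|^{(1/H-1)\wedge 1}$ as $\xi\to 0$), and this is exactly what makes the limiting variance constant — an integral of $|\hat f(\xi)|^2$ against a weight singular like $|\xi|^{-1/H}$ at $\xi=0$ — finite precisely when $H>\tfrac13$; it is not a decay condition on $\hat f$ at infinity as you assert.
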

In view of Theorem \ref{thm:nualart}, one could conjecture that in the general non-zero energy case, the process \eqref{eq:plainadditivefunctional},  standardized by the local time $L_{t}(\lambda)$, could exhibit an asymptotic mixed Gaussian as well. Indeed, this will shown to be true, as illustrated in our Theorem \ref{thm:main}. More precisely, we will prove that if we allow  $f$ to be replaced by a vector-valued function  $(f_{1}\dots, f_{d}):\R\rightarrow\R^{d}$, with $d\in\N$, then the compensation of the vector-valued process 
\begin{align*}
\bigg{\{}n^{\frac{H+1}{2}}\int_{0}^{t}\big(f_{1}(n^{H}B_{s}), \dots, f_{d}(n^{H}B_{s})\big)ds\ ;\ t\geq0\bigg{\}}
\end{align*}
converges towards an explicit linear transformation of a $\R^{d}$-dimensional Brownian motion $W=(W_{1},\dots,  W_{d})$ independent of $B$, subordinated to the local time of $B$. Although it is a phenomenon already observed  in \cite[Theorem 1]{Je} for a vector of functionals slightly different from the one we consider here,  it is worth noting that such a limit may seem surprising at first glance, because we could have rather expected that no additional sources of randomness should have emerged when we consider vector-valued test functions instead of real-valued ones. Our approach relies on Fourier analysis and martingale techniques, which are adapted to the problem under consideration by means of Malliavin calcus and the Clark-Ocone formula.

  The critical case $H=\frac{1}{3}$ is covered by our analysis as well, and is part of the main contributions of our paper. For this regime, in comparison to the case $H>\frac{1}{3}$,  we show asymptotic mixed normality after normalizing by an additional logarithmic factor. 

 Concerning the case $H<\frac{1}{3}$, we will show that the standardizations of \eqref{eq:plainadditivefunctional} converge in $L^{2}(\Omega)$, as $n$ tends to infinity, towards a constant times the spatial derivative of the local time of $B$, formally defined by 
\begin{align}\label{e:ltd}
L_{t}^{\prime}(\lambda)
  &:=\int_{0}^{t}\delta_{0}^{\prime}(B_{s}-\lambda)ds.
\end{align} 
As for (\ref{e:lt}), a rigorous definition of (\ref{e:ltd}) can be obtained by replacing $\delta_{0}^{\prime}(B_{s}-\lambda)$ by a suitable approximating mollifier (see Section \ref{ss:introlt}). The existence of $L_{t}^{\prime}(\lambda)$ can be guaranteed only in the case $H<\frac{1}{3}$ and its trajectories in time are H\"older continuous of order $\beta$ for every $\beta<1-2H$, as discussed in \cite{JaNoPe}. For this problem, the approach we follow consists on directly computing the $L^{2}(\Omega)$-norm of the  associated error by means of a suitable Fourier representation of the local time and its derivative.

%=============================================================================================================
\subsection{Main results}\label{sec:mainresults}
In this section we present in detail our main results, which are stated in full generality in Theorem \ref{thm:main} and Theorem \ref{thm:main2} below, 
and require a non-negligible amount of notation. In what follows, 
$B=\{B_{t}\ \processsymb\ t\geq0\}$ denotes a fractional Brownian motion defined on a probability space $(\Omega,\Fc,\Pb)$. Namely, $B$ is a centered Gaussian process with covariance function $\E\left[B_{s} B_{t} \right]=R(s,t)$, where
\begin{equation}  \label{covfbm}
 R(s,t)
  :=\frac{1}{2}(s^{2H}+t^{2H}-|t-s|^{2H}).
\end{equation}

Set  

\begin{equation}  \label{Cth}
\Cth^2= \left\{ 
\begin{array}{lll}
\frac{H(2H-1)\Gamma(\frac 32-H)}{ \Gamma(2-2H) \Gamma(H-\frac 12)}&&  \text{if}  \,\, H>\frac 12  \\
1&&  \text{if}  \,\, H=\frac 12  \\
\frac{2H\Gamma(\frac 32-H)}{ (1-2H)\Gamma(1-2H) \Gamma(H+\frac 12)}&&  \text{if} \,\,  H<\frac 12.
\end{array}
\right.
 \end{equation}

Let $\beta_{H,1},\beta_{H,2}$ be the constants
\begin{align}
\beta_{H,1}
  :=
  \left\{
  \begin{array}{lll}
  \Cth(H-1/2)^{-1}   &  \text{ if }   & H> 1/2\\
  \Cth   &  \text{ if }   & H\leq  1/2
  \end{array}\right.
\ \ \ \ \ \ \ \text{ and }\ \ \ \ \ \ \ 
\beta_{H,2}
  :=(2H)^{-1}\beta_{H,1}^2.\label{eq:betadmain}
\end{align} 
 Define the function $\beta_{H,3}:\R_{+}^2\rightarrow\R_{+}$ by  
\begin{align}
\beta_{H,3}(s_1,s_2)
  &:=\Cth^2|H-1/2|^{-2}\int_{0}^{\infty}((\theta+s_1)^{H-\frac{1}{2}}-(\theta+s_2)^{H-\frac{1}{2}})^2d\theta.\label{eq:c2Hssdefmain}
\end{align}
In the sequel, for every measurable function $f:\R\rightarrow\R$ and any $w>0$ we set  
  \begin{equation} \label{wnorm}
  \| f\|_w= \int_{\R} |f(x)| (1+ |x|^w) dx
  \end{equation}
  and we denote by $\Xi_w$ the space of functions $f$ such that  $ \|f\|_w<\infty$.

The following lemma, whose proof is postponed at the beginning of Section \ref{ref:technicallemmas}, states that the quantity $\mathcal{A}_H[f, g]$ appearing in one of our main results (the forthcoming Theorem \ref{thm:main}) is indeed well-defined when $H>\frac{1}{3}$ and $f,g\in\Xi_1$.
\begin{Lemma}\label{lemma1.2}
If $H>\frac{1}{3}$ and $f,g\in\Xi_1$, the following integral  is absolutely convergent
\begin{align}\label{eq:AHfdefHbig}
\mathcal{A}_H[f, g]
  &:=\frac { \beta_{H,1}^2}{\pi}   \int_{\R_{+}^2}\int_{\R}\eta^2  \mathcal{B}_{\eta}[f,g ](s_1s_2)^{H-\frac{1}{2}}
	e^{-\frac{1}{2}(\beta_{H,2}(s_1^{2H}+s_2^{2H})+\beta_{H,3}(s_1,s_2))\eta^2}d\eta d\vec{s},
\end{align}
where  $\vec{s}=(s_1, s_2)$, and  where $\mathcal{B}_{\eta}[f,g]$ is defined as 
\begin{align*}
\mathcal{B}_{\eta}[f,g]
  &:=\int_{\R^2} f(x )g(\wt{x})
	(e^{\textbf{i} \eta  x }-1\big)\big(1-e^{-\textbf{i}\eta  \wt{x}}\big)d\eta d\vec{x},
\end{align*}
with $\vec{x}= (x ,\wt{x})$.
\end{Lemma}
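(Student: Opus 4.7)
The plan is to dominate the absolute value of the integrand in $\mathcal{A}_H[f,g]$ by two elementary bounds on $\mathcal{B}_\eta[f,g]$, integrate out $\eta$ via the Gaussian exponential, and then verify absolute integrability in $\vec s$ by passing to polar coordinates. Using $|e^{\textbf{i}\eta x}-1|\le 2$ and $|e^{\textbf{i}\eta x}-1|\le |\eta x|$ together with the analogues in $\tilde x$, one obtains, for every $\eta\in \R$,
\begin{equation*}
|\mathcal{B}_\eta[f,g]|\le C\|f\|_1\|g\|_1
\qquad\text{and}\qquad
|\mathcal{B}_\eta[f,g]|\le C\eta^2\|f\|_1\|g\|_1,
\end{equation*}
both right-hand sides being finite because $f,g\in \Xi_1$ (note $\int|x||f(x)|\,dx\le \|f\|_1$). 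Since $\beta_{H,2}>0$ and $\beta_{H,3}(s_1,s_2)\ge 0$ by its integral representation, the exponent
$\sigma^2(\vec s):=\beta_{H,2}(s_1^{2H}+s_2^{2H})+\beta_{H,3}(s_1,s_2)$
is bounded below by $\beta_{H,2}(s_1^{2H}+s_2^{2H})$, which is positive on $\R_{+}^2$; standard Gaussian moments then yield $\int_\R \eta^2 e^{-\sigma^2\eta^2/2}\,d\eta\lesssim \sigma^{-3}$ and $\int_\R \eta^4 e^{-\sigma^2\eta^2/2}\,d\eta\lesssim \sigma^{-5}$.

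Next, I would split the outer integration into the regions $\{|\vec s|\le 1\}$ and $\{|\vec s|>1\}$. On the bounded part, combining the constant bound on $\mathcal{B}_\eta$ with the $\sigma^{-3}$ Gaussian estimate gives the integrand bound $C(s_1s_2)^{H-1/2}(s_1^{2H}+s_2^{2H})^{-3/2}$; on the tail, combining the $\eta^2$ bound on $\mathcal{B}_\eta$ with the $\sigma^{-5}$ Gaussian estimate gives $C(s_1s_2)^{H-1/2}(s_1^{2H}+s_2^{2H})^{-5/2}$. Passing to polar coordinates $s_1=r\cos\theta$, $s_2=r\sin\theta$ with $\theta\in(0,\pi/2)$, these integrands reduce respectively to $r^{-H}\phi_3(\theta)\,dr\,d\theta$ and $r^{-3H}\phi_5(\theta)\,dr\,d\theta$, where $\phi_k(\theta):=(\cos\theta\sin\theta)^{H-1/2}(\cos^{2H}\theta+\sin^{2H}\theta)^{-k/2}$. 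The angular factor is integrable on $(0,\pi/2)$ because $(\cos\theta\sin\theta)^{H-1/2}\sim \theta^{H-1/2}$ near the endpoints, integrable for $H>0$, and $\cos^{2H}\theta+\sin^{2H}\theta$ stays bounded away from $0$. The radial integral $\int_0^1 r^{-H}\,dr$ is finite for all $H<1$, while the tail $\int_1^\infty r^{-3H}\,dr$ is finite precisely when $3H>1$, i.e.\ $H>\tfrac13$, which is exactly the hypothesis.

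The main obstacle is recognizing that no single bound on $\mathcal{B}_\eta[f,g]$ suffices across all scales of $\vec s$: the constant bound is optimal when $\sigma$ is small (near the origin of $\R_+^2$), whereas the quadratic $\eta^2$ bound is essential to generate enough radial decay at infinity, and it is precisely this latter tail estimate that selects the threshold $H=\tfrac13$. The detailed structure of $\beta_{H,3}$ plays no role beyond its non-negativity, as the crude lower bound $\sigma^2\ge \beta_{H,2}(s_1^{2H}+s_2^{2H})$ already suffices; this also explains why the same strategy cannot be pushed below $H=\tfrac13$, which is consistent with the genuinely different asymptotic regime described in the paper for $H<\tfrac13$.
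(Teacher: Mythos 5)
Your proof is correct and follows essentially the same route as the paper: dominate $\mathcal{B}_{\eta}[f,g]$, discard the nonnegative $\beta_{H,3}$ term, integrate the Gaussian in $\eta$, and verify that the resulting $\vec{s}$-integrand (behaving like $(s_1s_2)^{H-\frac12}(s_1^{2H}+s_2^{2H})^{-\frac32}$ near the origin and like $(s_1s_2)^{H-\frac12}(s_1^{2H}+s_2^{2H})^{-\frac52}$ at infinity) is integrable precisely when $H>\frac13$. The only cosmetic difference is that the paper uses the single interpolated bound $|e^{\textbf{i}a}-1|\leq 2(1\wedge|a|)$ together with Lemma \ref{lem:integralfourierfinite} and an iterated integral over $\{s_1\leq s_2\}$, whereas you use the two crude bounds on $\mathcal{B}_{\eta}[f,g]$ chosen according to the regions $|\vec{s}|\leq 1$ and $|\vec{s}|>1$ and conclude via polar coordinates.
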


In the case $H=\frac{1}{3}$,  for $f,g \in \Xi_1$ we will use the notation
\begin{align}\label{eq:AHfdefHsmall}   
\mathcal{A}_{\frac{1}{3}}[f,g]
  &:=\frac{6 \beta^2_{\frac 13,1}}{\sqrt{\pi}}   \left( \int_{\R} xf(x) dx \right)   \left( \int_{\R} xg(x) dx \right)
  \int_0^1 s^{-\frac 16} ( \beta_{\frac 13,2} (1+ s^{\frac 23}) + \beta_{\frac 13,3} (1,s))^{-\frac 52} ds,
  \end{align}
which is also well-defined as a product of three absolutely convergent integrals.

Finally, we will make use of the normalizing constants
\begin{equation} \label{NM}
 \ell_{n,H}:= \Indi{\{H>\frac{1}{3}\}}+(\log n)^{-\frac{1}{2}}\Indi{\{H=\frac{1}{3}\}},\quad \mbox{for }H\in[\frac13,1).
\end{equation}

We are now in a position to present our main results. We start with the case $H\geq\frac13$.
 
\begin{Theorem}\label{thm:main}
Fix $H\geq\frac{1}{3}$, and recall the definition (\ref{NM}) of $\ell_{n,H}$ and the definition (\ref{eq:AHfdefHbig}) of $\mathcal{A}_H[g_{1},g_{2}]$, for $g_{1},g_{2}\in\Xi_1$. Consider a  function $f:\R\rightarrow\R^{d}$ of the form $f=(f_{1}\dots, f_{d})$ with 
$f_{i}\in\Xi_1$ if $H>\frac 13$ and $f_i \in \Xi_2$ if $H=\frac 13$. Define the  matrix $\mathcal{C}_{H}[f]=\{\mathcal{C}_{H}^{(i,j)}[f]\ ;\ 1\leq i,j\leq d\}$ given by $\mathcal{C}_{H}^{(i,j)}[f]:=\mathcal{A}_H[f_{i},f_{j}]$. Then, as  $n$ tends to infinity,
\begin{align}  \notag
& \left \{n^{\frac {H+1}2}\ell_{n,H}\bigg( \int_0^{t } f(n^H(B_{s}-\lambda)) ds-n^{-H}L_{t}(\lambda)\int_{\R}f(x)dx\bigg)
\ ;\ t\geq0 \right\}  \\
  & \qquad \stackrel{f.d.d}{\rightarrow}\{\sqrt{\mathcal{C}_H[f]}\,\, \wt{W}_{L_{t}(\lambda)}\ ;\ t\geq0\}, \label{eq:thmmain}
\end{align}
where $\wt{W}=\{\wt{W}_{t}\ ;\ t\geq0\}$ is a $d$-dimensional Brownian motion independent of $B$, $\sqrt{ \mathcal{C}_H[f]}$ denotes the square root matrix of $\mathcal{C}_H[f]$ and {\it f.d.d.}\!\! means  the  convergence of the finite-dimensional distributions.
\end{Theorem}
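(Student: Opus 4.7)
The plan is to combine a Fourier representation of $f$ with the Clark-Ocone formula, in order to express the centered process as a continuous $(\mathcal{F}_t^W)$-martingale (with respect to the Brownian motion $W$ appearing in the Volterra representation $B_t = \int_0^t K_H(t,u)dW_u$) plus an asymptotically negligible remainder, and then to apply a stable martingale central limit theorem. The starting point is the Fourier identities $f_i(y) = (2\pi)^{-1}\int_\R \widehat{f_i}(\eta) e^{i\eta y} d\eta$ and $L_t(\lambda) = (2\pi)^{-1}\int_\R e^{-i\mu\lambda}\int_0^t e^{i\mu B_s} ds\, d\mu$ (the second understood in a suitable mollified sense). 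Combined with $\int f_i = \widehat{f_i}(0)$, they yield
\[
\int_0^t f_i(n^H(B_s-\lambda))\,ds - n^{-H}L_t(\lambda)\int f_i = \frac{1}{2\pi}\int_{\R}\bigl(\widehat{f_i}(\eta) - \widehat{f_i}(0)\bigr) e^{-i n^H\eta\lambda}\int_0^t e^{i n^H\eta B_s}\,ds\,d\eta,
\]
and the factor $\widehat{f_i}(\eta) - \widehat{f_i}(0) = \int f_i(y)(e^{-i\eta y}-1)dy$ is precisely what generates the bilinear form $\mathcal{B}_\eta[f_i, f_j]$ in the limiting covariance.

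Next, I would apply the Clark-Ocone formula to each $e^{i n^H\eta B_s}$: since $D_u e^{i n^H\eta B_s}= i n^H\eta K_H(s,u)\mathbf{1}_{\{u\leq s\}}e^{i n^H\eta B_s}$ and Gaussian conditioning provides an explicit closed form for $E[e^{i n^H\eta B_s}|\mathcal{F}_u^W]$ (in terms of the conditional mean $E[B_s|\mathcal{F}_u^W]$ and the residual variance), the centered functional rewrites, modulo negligible boundary terms, as an It\^o integral $\int_0^t \Phi_n^{(i)}(u,t)\,dW_u$ with an explicit adapted integrand $\Phi_n^{(i)}$ built from $K_H(s,u)$, $\widehat{f_i}(\eta)-\widehat{f_i}(0)$, Gaussian factors in $\eta$, and an $s$-integration over $[u,t]$. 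The decisive step is then to show that the predictable covariation
\[
n^{H+1}\ell_{n,H}^2 \int_0^t \Phi_n^{(i)}(u,t)\Phi_n^{(j)}(u,t)\,du\ \stackrel{\Pb}{\longrightarrow}\ \mathcal{A}_H[f_i,f_j]\,L_t(\lambda).
\]
After the change of variables $u=t-\tau/n$ and a further rescaling of Fourier variables, the integrand factorises asymptotically into an occupation-density factor at level $\lambda$, whose integral in $\tau$ converges to $L_t(\lambda)$, and the purely analytic constant $\mathcal{A}_H[f_i,f_j]$ given by \eqref{eq:AHfdefHbig}. A vector-valued stable martingale CLT (in the spirit of Jacod-Shiryaev) then delivers the mixed-Gaussian limit $\sqrt{\mathcal{C}_H[f]}\,\widetilde{W}_{L_t(\lambda)}$, with $\widetilde{W}$ Brownian and independent of $B$; the finite-dimensional statement at times $t_1<\cdots<t_m$ follows from the same argument applied to the full joint covariation matrix.

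\textbf{Main obstacle.} The hardest step is the convergence of the predictable quadratic variation, particularly at the critical Hurst exponent $H=1/3$. For $H>1/3$, absolute convergence of the limit integral is precisely the content of Lemma \ref{lemma1.2}: the Gaussian factor $\exp(-\tfrac12(\beta_{H,2}(s_1^{2H}+s_2^{2H})+\beta_{H,3}(s_1,s_2))\eta^2)$, combined with the estimate $\mathcal{B}_\eta[f_i,f_j]=O(\eta^2)$ as $\eta\to 0$, controls the integrand both at infinity in $\eta$ and across the singular locus where the prefactor $(s_1s_2)^{H-1/2}$ blows up. At $H=1/3$ this integrability becomes logarithmically borderline and the naive limiting expression \eqref{eq:AHfdefHbig} diverges; the leading-order Taylor expansion $e^{i\eta x}-1=i\eta x+O(\eta^2)$ reduces the limiting constant to the explicit formula \eqref{eq:AHfdefHsmall} involving only the first moments $\int xf_i\,dx$, and the resulting $\log n$ divergence is exactly absorbed by the prefactor $\ell_{n,1/3}^2=(\log n)^{-1}$. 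Making this cancellation rigorous requires a careful splitting of the integration region in $\eta$ and $(s_1,s_2)$ and a delicate dominated-convergence argument; this is also the reason behind the strengthened moment hypothesis $f_i\in\Xi_2$ in the critical case.
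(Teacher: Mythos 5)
Your overall route---Fourier inversion, Clark--Ocone to get an It\^o-integral representation with respect to the underlying Brownian motion $W$, convergence of the predictable bracket, then a martingale limit theorem---is essentially the paper's strategy (the paper applies Clark--Ocone to the mollified local time rather than to the exponentials $e^{\textbf{i}n^{H}\eta B_s}$, but after Fourier inversion the computational object is the same). The genuine gap is in how you invoke the limit theorem. Convergence of the predictable covariation $n^{H+1}\ell_{n,H}^2\int_0^t\Phi_n^{(i)}(u,t)\Phi_n^{(j)}(u,t)\,du\to\mathcal{A}_H[f_i,f_j]\,L_t(\lambda)$ is \emph{not} sufficient to conclude convergence to $\sqrt{\mathcal{C}_H[f]}\,\wt{W}_{L_t(\lambda)}$ with $\wt{W}$ independent of $B$: both the asymptotic Knight theorem used in the paper and the Jacod--Shiryaev type stable CLTs you cite require, in addition, that the bracket with the driving martingale vanishes, i.e. $\langle M^{(n)},W\rangle_u\to 0$ in probability (the paper's conditions \textbf{(C2)}/\textbf{(A2)}). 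Without this orthogonality the limit need not be independent of $W$ at all---the trivial example $M^{(n)}=W$ has a convergent bracket but produces no independent $\wt{W}$---so the mixed-Gaussian identification of the limit law is unsupported. In the present problem this condition is not automatic: the paper devotes an entire subsection to proving $n^{\frac{1-H}{2}}\ell_{n,H}\int_0^u F^{(f,n)}_{r,t}\,dr\to 0$ in $L^2(\Omega)$, using the local nondeterminism of $B$ and careful kernel estimates, and the hypothesis $H\geq\tfrac13$ is used there a second time. Your proposal presents the bracket convergence as ``the decisive step'' and never mentions this requirement, so as written the argument cannot deliver the stated independence of $\wt{W}$ from $B$.

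Two further execution points, less structural but worth noting. First, your heuristic for extracting $L_t(\lambda)$ via the change of variables $u=t-\tau/n$ is misdirected: the $1/n$-scale lives in the inner time variable $s-u$ (the paper's substitution $s\mapsto u+s/n$) and in the Fourier variable $\xi=n^{H}\eta$, while the outer variable $u$ stays macroscopic and yields the occupation density $\frac{1}{2\pi}\int_0^t\int e^{-\textbf{i}\wt{\eta}(B_u-\lambda)}d\wt{\eta}\,du=L_t(\lambda)$ only after a truncation $|\wt{\eta}|\le m$ and a separate argument that the high-frequency part is negligible (the paper's term $\Lambda_3$, handled with conditional-variance lower bounds). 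Second, in the bracket the ``fast'' factor carrying the increment $B_{r,r+s_1/n}-B_{r,r+s_2/n}$ must be decoupled from the ``slow'' factor $e^{-\textbf{i}\wt{\eta}(B_r-\lambda)}$ before any pointwise limit can be taken; this replacement of an oscillating exponential by its expectation (the paper's term $\Lambda_2$) is a genuine estimate, not a dominated-convergence step, and your sketch glosses over it. Your discussion of the $H=\tfrac13$ logarithmic renormalization and of Lemma \ref{lemma1.2} for $H>\tfrac13$ is, by contrast, accurate and matches the paper's treatment.
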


The $H<\frac{1}{3}$ case is covered by the following result.
\begin{Theorem}\label{thm:main2}
Suppose that $H<\frac{1}{3}$. Let $f:\R\rightarrow\R$ be a measurable function satisfying $\int_{\R}|f(y)|(1+|y|^{1+\nu})dy$ for some $\nu>0$. Then,  for  every $t>0$
and $\lambda \in \R$, we have that 
\begin{align}\label{eq:thmmain2}
n^H\bigg( n^H\int_0^{t } f(n^H(B_{s}-\lambda)) ds-L_{t}(\lambda)\int_{\R}f(x)dx\bigg)
  &\stackrel{L^{2}(\Omega)}{\rightarrow}  L_{t}^{\prime}(\lambda)\int_{\R}yf(y)dy,
\end{align}
where $L_{t}^{\prime}(\lambda)$ denotes the derivative of the local time of $B$ up to time $t$ at the level $\lambda\in\R$,
see Section \ref{ss:introlt}.
\end{Theorem}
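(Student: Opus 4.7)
The plan is to represent both sides of \eqref{eq:thmmain2} using the Fourier inversion of the Dirac $\delta$, so that the left-hand side and the candidate limit become integrals in a single frequency variable $\eta\in\R$ of deterministic weights against the random process $A_t(\eta):=\int_0^t e^{i\eta(B_s-\lambda)}\,ds$, and then to compute the resulting $L^2(\Omega)$-error directly. Writing $\hat f(\xi)=\int_\R e^{-i\xi y}f(y)\,dy$ and performing the substitution $\eta=n^H\xi$ inside the Fourier inversion, one obtains
\begin{equation*}
n^H\int_{0}^{t}f(n^H(B_s-\lambda))\,ds=\frac{1}{2\pi}\int_{\R}\hat f(\eta/n^H)\,A_t(\eta)\,d\eta,
\end{equation*}
while the identities $L_t(\lambda)=\frac{1}{2\pi}\int_\R A_t(\eta)\,d\eta$, $L_t'(\lambda)=\frac{1}{2\pi}\int_\R i\eta\, A_t(\eta)\,d\eta$ and $\int y f(y)\,dy=i\hat f'(0)$ together allow me to rewrite the difference between the two sides of \eqref{eq:thmmain2}, up to an inessential overall sign, as
\begin{equation*}
I_n:=\frac{1}{2\pi}\int_{\R}\psi_n(\eta)\,A_t(\eta)\,d\eta,\qquad \psi_n(\eta):=n^H\big[\hat f(\eta/n^H)-\hat f(0)\big]-\eta\,\hat f'(0).
\end{equation*}

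Squaring, applying Fubini and using the Gaussian characteristic function of the increments of $B$, one then arrives at
\begin{equation*}
\E|I_n|^2=\frac{1}{4\pi^2}\int_{\R^2}\psi_n(\eta)\overline{\psi_n(\tilde\eta)}\,e^{-i(\eta-\tilde\eta)\lambda}\,K_t(\eta,\tilde\eta)\,d\eta\,d\tilde\eta,\quad K_t(\eta,\tilde\eta):=\int_{[0,t]^2}\!\!e^{-\frac12\Var(\eta B_s-\tilde\eta B_r)}ds\,dr.
\end{equation*}
Since $|e^{-i(\eta-\tilde\eta)\lambda}|=1$, a dominated convergence argument will give $\E|I_n|^2\to 0$ provided: (i) $\psi_n(\eta)\to 0$ pointwise, and (ii) there is a bound $|\psi_n(\eta)|\le g(\eta)$ uniform in $n$ with $\int g(\eta)g(\tilde\eta)K_t(\eta,\tilde\eta)d\eta d\tilde\eta<\infty$. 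For (i), the elementary Taylor inequality $|e^{-i\xi y}-1+i\xi y|\le C|\xi y|^{1+\nu}$ (valid for $\nu\in(0,1]$, which is the relevant range after possibly reducing $\nu$) combined with the hypothesis $\int|f(y)|(1+|y|^{1+\nu})dy<\infty$ yields $|\psi_n(\eta)|\le Cn^{-H\nu}|\eta|^{1+\nu}$. For (ii), differentiating $\hat f$ at the origin gives the $n$-independent bound $|\psi_n(\eta)|\le C|\eta|\int|y||f(y)|dy$, so one may take $g(\eta)=C|\eta|$.

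The main technical point -- and the place where the assumption $H<\tfrac13$ really enters -- is then to verify
\begin{equation*}
\int_{\R^2}|\eta||\tilde\eta|\,K_t(\eta,\tilde\eta)\,d\eta\,d\tilde\eta<\infty.
\end{equation*}
This is, up to constants, the same computation that underlies the $L^2(\Omega)$-existence of $L_t'(\lambda)$ established in \cite{JaNoPe}. On $\{0\le s\le r\le t\}$, switching to the variables $u=\eta-\tilde\eta$, $v=\tilde\eta$ and invoking the local nondeterminism property of fBm provides a lower bound of the form
\begin{equation*}
\Var(\eta B_s-\tilde\eta B_r)\ge c\big(u^2\,s^{2H}+v^2\,(r-s)^{2H}\big);
\end{equation*}
performing the Gaussian integral in $(u,v)$ then produces a kernel of order $s^{-H}(r-s)^{-3H}$ (together with a subleading contribution of order $s^{-2H}(r-s)^{-2H}$), whose integrability on $\{0\le s\le r\le t\}$ is equivalent to the condition $3H<1$. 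The principal obstacle is to make this local nondeterminism estimate and the resulting fractional-powers bookkeeping go through cleanly; once this is in hand, dominated convergence combined with the pointwise decay of $\psi_n$ delivers $\E|I_n|^2\to 0$, which is exactly \eqref{eq:thmmain2}.
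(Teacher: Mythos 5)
Your argument is correct and rests on the same core ingredients as the paper's proof --- the Fourier representations of $L_t$ and $L'_t$, the Taylor bound $|e^{\textbf{i}z}-1-\textbf{i}z|\le C|z|^{1+\nu}$, and a local nondeterminism estimate that reduces everything to the integrability of a Gaussian kernel, which is exactly where $3H<1$ enters --- but you organize it differently. The paper stays on the space side: via the occupation formula it writes the error as $\int_\R f(y)\Psi_n(y)dy$ with $\Psi_n(y)=L_t(\lambda+yn^{-H})-L_t(\lambda)-yn^{-H}L'_t(\lambda)$, applies the Taylor bound with an exponent $1+\alpha$ chosen so that $3H+2\alpha H<1$, and then integrates out the frequencies using Lemma 5.1 of \cite{JaNoPe}; this yields the quantitative estimate $\E[n^{2H}\mathcal{D}_n^2(t)]\le Cn^{-\alpha H}$. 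You pass to the frequency side from the start, encode the whole compensation in the multiplier $\psi_n(\eta)=n^H\int_\R f(y)\big(e^{-\textbf{i}y\eta n^{-H}}-1+\textbf{i}y\eta n^{-H}\big)dy$, and conclude by dominated convergence: the Taylor bound is used only for the pointwise decay $|\psi_n(\eta)|\le Cn^{-H\nu}|\eta|^{1+\nu}$, while the $n$-uniform domination $|\psi_n(\eta)|\le C|\eta|$ reduces matters to $\int_{\R^2}|\eta||\wt\eta|K_t(\eta,\wt\eta)\,d\eta\,d\wt\eta<\infty$, which is precisely the second-moment condition guaranteeing $L'_t\in L^2(\Omega)$, verified by the same local nondeterminism computation (your exponent bookkeeping $s^{-H}(r-s)^{-3H}+s^{-2H}(r-s)^{-2H}$ is right). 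This buys slightly cleaner exponents --- no $\alpha$-tradeoff is needed --- at the price of losing the explicit rate $n^{-\alpha H}$, which the theorem does not require. Two points deserve a line of care in a written version: the identity $I_n=\frac{1}{2\pi}\int_\R\psi_n(\eta)A_t(\eta)d\eta$ is not an absolutely convergent integral identity (neither $\hat f$ nor the Fourier integrals defining $L_t,L'_t$ are absolutely integrable), so it must be obtained as an $L^2(\Omega)$ limit of symmetric truncations or by mollifying $f$, in the spirit of \eqref{eq:FourierrepL}--\eqref{eq:FourierrepLprime}; and the sign of the limit depends on whether $L'_t$ is understood as $\partial_\lambda L_t(\lambda)$ or as $\int_0^t\delta'_0(B_s-\lambda)ds$ --- your ``inessential overall sign'' is the same convention ambiguity already visible in the paper (compare \eqref{e:ltd} and \eqref{eq:FourierrepLprime} with the definition of $\Psi_n$ in Section \ref{sec:main2}), so it should be fixed once and tracked rather than waved at.
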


The rest of the paper is organized as follows. In Section \ref{sec:prelim}, we present some preliminary results on local times and Malliavin calculus. In Sections  \ref{sec:main} and \ref{sec:main2} we present the proofs of our main results, Theorems \ref{thm:main} and \ref{thm:main2}. Finally, in Section \ref{ref:technicallemmas} we prove some technical identities that are used in the proof of the main theorems.

\begin{Remark}
{\rm
We would like to mention that we have exclusively focused on $\R$-valued fractional Brownian motions. However, the above problem can be set-up for the $d$-dimensional Brownian motion as well. In this case, the interaction between $H$ and $d$ plays a fundamental role in the behavior of the law of \eqref{eq:plainadditivefunctional}. The reader is referred to the work by Kallianpur and Robbins \cite{KallRobb}, Kasahara and  Kotani \cite{KasKot}, K\^ono \cite{Kono} and Hu, Nualart and Xu  (\cite{HuNuXu} and \cite{NuXu}) for a detailed description of the state of the art of this  problem.
}
\end{Remark}

%\textcolor{red}{Question: Shouldn't we discuss tightness?}

%=======================================================================================================
\section{Preliminaries}\label{sec:prelim}
\subsection{Malliavin caculus for  Gaussian processes}\label{subsec:Malliavin}
In this section we provide some notation and introduce the basic elements  of Malliavin calculus. The reader is referred to \cite{Nualart} and \cite{NoPebook} for a comprehensive presentation of this topic. Throughout the paper, $W=\{W_{t}\ \processsymb\ t\geq0\}$ will denote a standard $\R$-valued Brownian motion defined on a probability space $(\Omega,\Fc,\Pb)$.
 
Let $K_H:\R_{+}^2\rightarrow\R$ be the kernel given by 
\begin{align}\label{eq:KhdefHbig}
K_H(t,s)
  &:=\Cth s^{\frac{1}{2}-H}\int_{s}^t(u-s)^{H-\frac{3}{2}}u^{H-\frac{1}{2}}du,
\end{align}
if $H>\frac{1}{2}$, and by 
\begin{align}\label{eq:KhdefHbig2}
K_H(t,s)
  &:=\Cth\left[\left(\frac{t}{s}\right)^{H-\frac{1}{2}}(t-s)^{H-\frac{1}{2}}-(H-\frac{1}{2})s^{\frac{1}{2}-H}\int_{s}^tu^{H-\frac{3}{2}}(u-s)^{H-\frac{1}{2}}du\right],
\end{align}
if $H< \frac{1}{2}$, with the convention that $K_H(t,s)=0$ if $t\ge s$, and where $\Cth$ 
is the constant introduced in \eqref{Cth}. 
  For $H=\frac{1}{2}$ we simply set   $K_H(t,s)={\bf 1}_{\{s<t\}}$.
  
  Let $B=\{B_{t}\ \processsymb\ t\geq0\}$ be the unique (up to indistinguishability) continuous modification of the process
\begin{align*}
B_{t}
  &:=\int_{0}^{t}K_H(t,s)dW_s,
\end{align*}
where $H\in(0,1)$. It is well-known that $B$  is a fractional Brownian motion of Hurst parameter $H$, namely,  $B$ is a centered Gaussian process with covariance function   given by \eqref{covfbm}.
A detailed proof of this fact can be found in \cite[Proposition~2.5]{NourdinfBm}.

The mapping $\Indi{[0,t]} \mapsto W_{t}$ can be extended to a linear isometry between $\Hg:=L^2([0,\infty))$ and the linear Gaussian subspace of ${L}^{2}\left(\Omega\right)$ generated by the process $W$. We denote this isometry by $h\to W(h)$.  Let $\Sf$ denote the set  of all cylindrical random variables of the form
\begin{align}\label{eq:smoothF}
F= g(W(h_{1}),\dots, W(h_{n})),
\end{align} 
where $g:\R^{n}\rightarrow\R$ is an infinitely differentiable function with compact support  and $h_{1},\dots, h_{n}$ are step functions defined over $[0,\infty)$. In the sequel, we  refer to the elements of $\Sf$ as ``smooth random variables''. The derivative operator of a random variable $F\in \Sf$ is the   $\Hg$-valued random variable   $DF= \{ D_tF; t\ge 0\}$, defined by
\begin{align*}
D_tF
  &:=\sum_{j=1}^n\frac{\partial f}{\partial x_j}(W(h_1),\dots, W(h_n))h_j(t).
\end{align*} 
For $p\geq1$, the space $\D^{1,p}$ denotes the closure of $\Sf$ with respect to the norm $\Norm{\cdot}_{\D^{1,p}}$, defined by 
\begin{align}\label{eq:seminorm}
\Norm{F}_{\D^{1,p}}
  &:=\left(\E\left[\Abs{F}^{p}\right]+\E\left[\Norm{D F}_{\Hg}^{p}\right]\right)^{\frac{1}{p}}.
\end{align}
The operator $D$ can be consistently extended to the space $\D^{1,p}$. One of the key ingredients for proving Theorem \ref{thm:main} is the celebrated Clark-Ocone formula, which establishes that every random variable $F\in\mathbb{D}^{1,2}$ satisfies the stochastic integral representation
\begin{align}\label{eq:ClarkOcone}
F
  &=\E[F] +\int_{\R_{+}}\E[D_rF\ |\ \mathcal{F}_r]dW_r,
\end{align}
where $\mathcal{F}_t$ denotes the natural $\sigma$-algebra generated by $\{W_{s}\ ;\ s\leq t\}$.
%==================================================================================================================
\subsection{Local times}\label{ss:introlt} We recall that, for $t>0$ and $\lettrequivabien\in\R$, the {\it local time} of $B$ up to time $t$ at the level $\lambda $ and its spatial derivative are formally by \eqref{e:lt} and \eqref{e:ltd}. A rigorous definition of these objects can be obtained by considering the approximating random variables
%regarding \eqref{e:lt} and \eqref{e:ltd} as the limit in $L^{2}(\Omega)$ of the approximating sequences
\begin{align*}
L_{t,\varepsilon}(\lettrequivabien)
  :=\int_{0}^{t}p_{\varepsilon}(B_{s}-\lettrequivabien)ds,\ \ \ \ \ \ \ \ \ 
L_{t,\varepsilon}^{\prime}(\lettrequivabien)
  :=\int_{0}^{t}p_{\varepsilon}^{\prime}(B_{s}-\lettrequivabien)ds,
\end{align*}
where $p_{\varepsilon}(x):=(2\pi\varepsilon)^{-\frac{1}{2}}\exp\{-\frac{1}{2\varepsilon}x^2\}$ denotes the heat kernel of variance $\varepsilon>0$. Then, by \cite[Lemma 11]{JaNoPe}, we have that for all $H\in (0,1)$,  as  $\ep$ tends to zero, 
\begin{align*}
L_{t,\varepsilon}(\lettrequivabien)
  &\stackrel{L^{2}(\Omega)}{\rightarrow}L_{t}(\lettrequivabien).
\end{align*}
On the other hand, the family $L_{t,\varepsilon}^{\prime}(\lettrequivabien)$ can be shown to be divergent as $\ep$ tends to zero in $L^{2}(\Omega)$ when $\lambda=0$ and $H>\frac{1}{3}$, while in the case $H<\frac{1}{3}$,  for any $\lambda \in \R$,
\begin{align*}
L_{t,\varepsilon}^{\prime}(\lettrequivabien)
  &\stackrel{L^{2}(\Omega)}{\rightarrow}L_{t}^{\prime}(\lettrequivabien).
\end{align*}
The random variable $L_{t}(\lettrequivabien)$ is an ubiquitous object in the theory of stochastic processes, as it naturally emerges in connection with several fundamental topics, such as the extension of It\^o's formula to convex functions, the absolute continuity of the occupation measure of $B$ with respect to the Lebesgue measure, and the study of limit theorems for additive functionals of $B$ --- see \cite{Ber0, Ber, GerHor, HuOk, RY} for some general references on the subject. On the other hand, the study of the variables $L_{t}^{\prime}(\lettrequivabien)$ has recently gained momentum for the effectiveness of these random variables as a tool for describing the asymptotics of high-frequency statistics (see the work by Jaramillo, Nourdin and Peccati \cite{JaNoPe}).

  A fundamental identity that will be used throughout the paper is the Fourier inversion formula for $p_{\varepsilon}(x)$.
  It states that, for all $\varepsilon>0$ and $x\in\R$,
\begin{align}\label{eq:fourierGaussker}
p_{\varepsilon}(x)
  &=\frac 1{ 2\pi} \int_{\R}e^{-\frac{1}{2}\varepsilon\xi^2-\textbf{i}\xi x}d\xi.
\end{align}
This representation can be replaced in \eqref{e:lt} and \eqref{e:ltd} to obtain a Fourier representation for the local time and its spatial derivative. Indeed, in \cite[Lemma 1.1]{JaNoPe}, it was proved that the local time and its derivative can be represented as
\begin{align}
L_{t}(\lambda)
  &=\int_{\R}\int_{0}^{t}e^{\textbf{i}\xi(B_{s}-\lambda)}dsd\xi
	:=\lim_{N\rightarrow\infty}\int_{-N}^{N}\int_{0}^{t}e^{\textbf{i}\xi(B_{s}-\lettrequivabien)}dsd\xi, \label{eq:FourierrepL}\\
L_{t}^{\prime}(\lambda)
  &=\int_{\R}\int_{0}^{t}\textbf{i}\xi e^{\textbf{i}\xi(B_{s}-\lambda)}dsd\xi
	:=-\lim_{N\rightarrow\infty}\int_{-N}^{N}\int_{0}^{t}\textbf{i}\xi e^{\textbf{i}\xi(B_{s}-\lettrequivabien)}dsd\xi, \label{eq:FourierrepLprime}
\end{align}
meaning that, as $N\to \infty$, the right-hand sides of \eqref{eq:FourierrepL}  and  \eqref{eq:FourierrepLprime}  (if $H<\frac 13$)
converge in $L^2(\Omega)$ to  $L_t(\lambda)$ and  $L_{t}^{\prime}(\lettrequivabien)$, respectively.

Along the paper, for any $0\le r \le s$ we will make use of the following notation:
\begin{equation}\label{eq:Brsmursdef}
B_{r,s}:= \int_0^r K_H(s,\theta) dW_\theta
\ \ \ \ \text{ and }\ \ \ \ 
\mu_{r,s}:= \int_r^s K_H^2(s,\theta) d\theta.
\end{equation}
 An important ingredient for proving our results is the following  stochastic integral representation for $L_t(\lambda)$, which easily follows from the Clark-Ocone formula \eqref{eq:ClarkOcone}.
 
\begin{Lemma}\label{eq:LemCarlkoonerep}
For all $t\in\R_{+}$ and $\lambda\in\R$, we have that 
\begin{align}\label{eq:stochrep}
L_t(\lambda)
 &= \int_0^tp_{s^{2H}}(\lambda)ds+  \int_0^t\int_r^t   p'_{\mu_{r,s}} (B_{r,s} -\lambda)  K_H(s,r)dsdW_r,
\end{align}
where $B_{r,s}$ and  $\mu_{r,s}$ are defined in \eqref{eq:Brsmursdef}.
\end{Lemma}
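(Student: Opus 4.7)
The plan is to derive the representation by applying the Clark--Ocone formula \eqref{eq:ClarkOcone} to the mollified local time and then passing to the limit. Write
\begin{align*}
L_{t,\varepsilon}(\lambda) \;=\; \int_{0}^{t} p_{\varepsilon}(B_{s}-\lambda)\,ds,
\end{align*}
so that $L_{t,\varepsilon}(\lambda)\to L_{t}(\lambda)$ in $L^{2}(\Omega)$ by the result of \cite{JaNoPe} quoted in Section \ref{ss:introlt}. For each fixed $s\in(0,t]$, the variable $p_{\varepsilon}(B_{s}-\lambda)$ belongs to $\mathbb{D}^{1,2}$ (standard smoothing argument), and since $B_{s}=\int_{0}^{s}K_{H}(s,\theta)\,dW_{\theta}$, its Malliavin derivative with respect to $W$ is
\begin{align*}
D_{r}\,p_{\varepsilon}(B_{s}-\lambda) \;=\; p_{\varepsilon}'(B_{s}-\lambda)\,K_{H}(s,r)\,\mathbbm{1}_{[0,s]}(r).
\end{align*}

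Next I would compute the conditional expectation $\mathbb{E}[p'_{\varepsilon}(B_{s}-\lambda)\mid\mathcal{F}_{r}]$ by splitting
\begin{align*}
B_{s} \;=\; B_{r,s} + \int_{r}^{s} K_{H}(s,\theta)\,dW_{\theta},
\end{align*}
where $B_{r,s}$ is $\mathcal{F}_{r}$-measurable and the second summand is a centered Gaussian with variance $\mu_{r,s}$ independent of $\mathcal{F}_{r}$. The semigroup identity $p_{a}*p_{b}'=p'_{a+b}$ for heat kernels then gives
\begin{align*}
\mathbb{E}[p'_{\varepsilon}(B_{s}-\lambda)\mid\mathcal{F}_{r}] \;=\; p'_{\varepsilon+\mu_{r,s}}(B_{r,s}-\lambda),
\end{align*}
and analogously $\mathbb{E}[p_{\varepsilon}(B_{s}-\lambda)]=p_{\varepsilon+s^{2H}}(\lambda)$. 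Substituting into the Clark--Ocone formula \eqref{eq:ClarkOcone} applied to $F=p_{\varepsilon}(B_{s}-\lambda)$ yields
\begin{align*}
p_{\varepsilon}(B_{s}-\lambda) \;=\; p_{\varepsilon+s^{2H}}(\lambda) + \int_{0}^{s} p'_{\varepsilon+\mu_{r,s}}(B_{r,s}-\lambda)\,K_{H}(s,r)\,dW_{r}.
\end{align*}

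Integrating in $ds$ over $[0,t]$ and swapping the order of integration via the stochastic Fubini theorem (Protter, Ch.\ IV) leads to
\begin{align*}
L_{t,\varepsilon}(\lambda) \;=\; \int_{0}^{t} p_{\varepsilon+s^{2H}}(\lambda)\,ds + \int_{0}^{t}\!\!\int_{r}^{t} p'_{\varepsilon+\mu_{r,s}}(B_{r,s}-\lambda)\,K_{H}(s,r)\,ds\,dW_{r}.
\end{align*}
To apply stochastic Fubini one needs $\int_{0}^{t}\mathbb{E}\bigl[(\int_{r}^{t} p'_{\varepsilon+\mu_{r,s}}(B_{r,s}-\lambda)\,K_{H}(s,r)\,ds)^{2}\bigr]dr<\infty$, which is routine because $p'_{\varepsilon+\mu_{r,s}}$ is uniformly bounded by $C(\varepsilon+\mu_{r,s})^{-1}$ and $K_H(s,\cdot)\in L^{2}$.

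Finally I would let $\varepsilon\downarrow 0$. The deterministic term converges to $\int_{0}^{t}p_{s^{2H}}(\lambda)\,ds$ by dominated convergence, and the left-hand side converges to $L_{t}(\lambda)$ in $L^{2}(\Omega)$. The hardest part is to verify that the stochastic integral term converges in $L^{2}(\Omega)$ to $\int_{0}^{t}\!\int_{r}^{t}p'_{\mu_{r,s}}(B_{r,s}-\lambda)K_{H}(s,r)\,ds\,dW_{r}$; this amounts to showing the finiteness and continuity in $\varepsilon\ge 0$ of the Itô isometry
\begin{align*}
\int_{0}^{t}\mathbb{E}\Bigl[\Bigl(\int_{r}^{t} p'_{\varepsilon+\mu_{r,s}}(B_{r,s}-\lambda)K_{H}(s,r)\,ds\Bigr)^{2}\Bigr]dr,
\end{align*}
which after taking the Gaussian expectation of $p'_{\varepsilon+\mu_{r,s}}(B_{r,s}-\lambda)p'_{\varepsilon+\mu_{r,u}}(B_{r,u}-\lambda)$ reduces to a tractable deterministic double integral whose convergence as $\varepsilon\to 0$ follows from the bounds on $\mu_{r,s}$ and $K_H$ established for the existence of $L_t(\lambda)$ in the references cited in Section \ref{ss:introlt}.
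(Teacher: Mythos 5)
Your proposal is correct and follows essentially the same route as the paper: mollify the local time with the heat kernel, apply the Clark--Ocone formula, compute the conditional expectation via the decomposition $B_s=B_{r,s}+(B_s-B_{r,s})$ with $\mathrm{Var}[B_s-B_{r,s}]=\mu_{r,s}$, and pass to the limit in $L^2(\Omega)$ as the mollification parameter vanishes. The only (cosmetic) difference is that you apply Clark--Ocone to $p_\varepsilon(B_s-\lambda)$ for each fixed $s$ and then invoke a stochastic Fubini theorem, whereas the paper applies Clark--Ocone directly to the time-integrated functional $L^{(n)}_t(\lambda)$; both arguments, including the level of detail given for the final limit, are in substance the same.
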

\begin{proof}
Let $n\in\N$ and define
\begin{align*}
L_t^{(n)}(\lambda)
  &:=\int_{0}^{t}p_{\frac{1}{n}}(B_{s}-\lambda)ds.
\end{align*}
By \eqref{eq:ClarkOcone}, we can write
\begin{align*}
L_t^{(n)}(\lambda)
  &=\int_{0}^{t}\E[p_{\frac{1}{n}}(B_{s}-\lambda)]ds
	+\int_{0}^{t}\int_{0}^{t}\E[D_rp_{\frac{1}{n}}(B_{s}-\lambda)\ |\ \mathcal{F}_r]dsdW_r\\
	&=\int_{0}^{t}p_{\frac{1}{n}+s^{2H}}(\lambda)ds
	+\int_{0}^{t}\int_{0}^{t}\Indi{\{r\leq s\}}K_H(s,r)\E[ p_{\frac{1}{n}}^{\prime}(B_{r,s}+(B_{s}-B_{r,s})-\lambda)\ |\ \mathcal{F}_r]dsdW_r.
\end{align*}
Using the fact that $B_{s}-B_{r,s}$ is independent of $\mathcal{F}_r$, $B_{r,s}$ is $\mathcal{F}_r$ measurable and $\text{Var}[B_{s}-B_{r,s}]=\mu_{r,s}$, we thus obtain
\begin{align*}
L_t^{(n)}(\lambda)
	&=\int_{0}^{t} p_{\frac{1}{n}+s^{2H}}(\lambda) ds
	+\int_{0}^{t}\int_{r}^{t}K_H(s,r)p_{\frac{1}{n}+\mu_{r,s}}^{\prime}(B_{r,s}-\lambda)dsdW_r.
\end{align*}
The result is obtained by taking the limit as $n\rightarrow\infty$
\end{proof}

\subsection{Local nondeterminism} The following property of the local nondeterminism of the fractional Brownian motion will play a fundamental role in our proofs. For a proof, see e.g. Xiao \cite{Xi} and the references therein.

\begin{Proposition}
Let $B= \{ B_t; t\ge 0\}$ be a fractional Brownian motion with Hurst parameter $H\in (0,1)$. 
Then,  there exists a constant $ \kappa_H$ such that, for any integer $m\geq 1$, any times $t_m>\cdots>t_2>t_1>0$
and $t >0$, we have
\begin{equation} \label{LND}
{\rm Var} [ B_t \ |\  B_{t_1} , \dots, B_{t_m} ]  \ge \kappa_H( \min\{  |t-t_j|, 1\le j \le m\})^{2H}.
\end{equation}
\end{Proposition}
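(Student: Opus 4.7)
The plan is to follow Pitt's harmonic-analytic strategy: combine the harmonizable representation of fBm with a Fourier duality argument, using a smooth bump localized near $t$ at scale $r := \min_{j} |t - t_j|$ to produce the desired lower bound.

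First I would reduce the conditional variance to a deterministic infimum. Since $(B_t, B_{t_1}, \ldots, B_{t_m})$ is centered and jointly Gaussian, the orthogonality principle yields
\[
\Var[B_t \mid B_{t_1}, \ldots, B_{t_m}] \;=\; \inf_{a_1, \ldots, a_m \in \R} \E\!\left[\left(B_t - \sum_{j=1}^m a_j B_{t_j}\right)^{\!2}\right],
\]
so it suffices to bound the right-hand side below by $\kappa_H r^{2H}$ uniformly in the coefficients $a_j \in \R$.

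Next, using the harmonizable representation $B_t = c_H \int_\R (e^{\textbf{i} t \xi} - 1) |\xi|^{-H - 1/2} \, \widetilde{W}(d\xi)$, where $\widetilde{W}$ is a complex Gaussian random measure and $c_H > 0$ is an explicit constant, one obtains the Plancherel-type identity
\[
\E\!\left[\left(B_t - \sum_j a_j B_{t_j}\right)^{\!2}\right] \;=\; c_H^2 \int_\R |\widehat{\mu}(\xi)|^2 \, |\xi|^{-2H - 1} \, d\xi,
\]
where $\mu := \delta_t - \sum_j a_j \delta_{t_j} - \bigl(1 - \sum_j a_j\bigr)\delta_0$ and $\widehat{\mu}$ denotes its Fourier transform. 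Fix now a bump $\phi \in C_c^\infty(\R)$ with $\mathrm{supp}(\phi) \subset (-1/2, 1/2)$ and $\phi(0) = 1$, and set $g(s) := \phi((s-t)/r)$. Then $g(t) = 1$, $g(t_j) = 0$ for each $j$ (since $|t - t_j| \geq r$), and $g(0) = 0$ after appending the cost-free constraint $B_0 = 0$ to the conditioning and, if necessary, tightening $r$ to $\min\{t, \min_j |t - t_j|\}$; hence $\int g \, d\mu = 1$. Parseval combined with Cauchy--Schwarz then yields
\[
(2\pi)^2 \;=\; \left|\int \widehat{\mu}(\xi) \, \overline{\widehat{g}(\xi)} \, d\xi\right|^{\!2} \;\leq\; \left(\int |\widehat{\mu}(\xi)|^2 |\xi|^{-2H - 1} \, d\xi \right)\!\left(\int |\widehat{g}(\xi)|^2 |\xi|^{2H + 1} \, d\xi\right).
\]

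Finally, the change of variables $\eta = r\xi$ gives
\[
\int |\widehat{g}(\xi)|^2 |\xi|^{2H + 1} \, d\xi \;=\; r^{-2H} \int |\widehat{\phi}(\eta)|^2 |\eta|^{2H + 1} \, d\eta,
\]
and the right-hand integral is a finite constant $C(H, \phi)$ thanks to the Schwartz decay of $\widehat{\phi}$. Substituting back, one obtains $\E[(B_t - \sum_j a_j B_{t_j})^2] \geq \kappa_H r^{2H}$ with $\kappa_H = 4\pi^2 c_H^2 / C(H, \phi)$, which is the desired inequality. The main technical obstacle is the boundary bookkeeping near the origin: making sure the bump $g$ annihilates the Dirac mass at $0$ arising from the implicit $B_0 = 0$ constraint. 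This is handled by the free augmentation of the conditioning described above; elsewhere the argument is driven purely by the self-similar scaling of the spectral density $|\xi|^{-2H - 1} d\xi$.
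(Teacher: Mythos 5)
The paper offers no proof of this proposition at all: it simply refers the reader to Xiao and the references therein, which ultimately means Pitt's spectral argument. What you have written is exactly that classical proof, and its core is sound: the reduction of the conditional variance to $\inf_{a_j}\E[(B_t-\sum_j a_jB_{t_j})^2]$, the harmonizable representation converting the prediction error into $c_H^2\int_\R|\widehat{\mu}(\xi)|^2|\xi|^{-2H-1}d\xi$, duality against the rescaled bump $g(\cdot)=\phi((\cdot-t)/r)$ via Parseval and Cauchy--Schwarz, and the exact scaling $\int|\widehat{g}(\xi)|^2|\xi|^{2H+1}d\xi=r^{-2H}C(H,\phi)$ with $C(H,\phi)<\infty$ by Schwartz decay (note also that $\mu(\R)=0$, so the weighted integral of $|\widehat{\mu}|^2$ is finite and the argument is not vacuous). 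Since the paper only cites the literature, your write-up is in effect a self-contained version of the proof behind the citation.

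The one substantive issue is the role of the origin, and it is more than ``boundary bookkeeping.'' Your construction needs $g(0)=0$, which forces $r\lesssim t$; after your tightening $r=\min\{t,\min_j|t-t_j|\}$ the bound you actually obtain is $\kappa_H\bigl(\min\{t,\min_j|t-t_j|\}\bigr)^{2H}$, not the displayed inequality. This is unavoidable, because the proposition as literally stated is false: since $B_0=0$, one always has ${\rm Var}[B_t\mid B_{t_1},\dots,B_{t_m}]\le {\rm Var}[B_t]=t^{2H}$, so taking $m=1$, $t_1=1$ and letting $t\to0$ contradicts any bound $\kappa_H(1-t)^{2H}$ with the minimum taken only over the $|t-t_j|$. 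The correct classical statement (Pitt, Xiao) is precisely the one your argument delivers, with the distance to $0$ included in the minimum, and that version is what the paper's applications of \eqref{LND} actually require (indeed, in the proof of \textbf{(A2)} the authors themselves invoke it with a term of the form $(r_1+\tfrac{s_1}{n})^{2H}$, i.e.\ the distance to the origin, inside the minimum). So your proof is right, but you should state the conclusion with the corrected minimum rather than presenting the shrinking of $r$ as an optional adjustment that still yields the inequality as displayed; alternatively, add the hypothesis $\min_j|t-t_j|\le t$.
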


%\textcolor{red}{Throughout} the paper we will denote by $\delta$ a generic constant depending only on $H$.
%==================================================================================================================
%==================================================================================================================
\section{Proof of Theorem \ref{thm:main}}  \label{sec:main}
In what follows, we fix a time $T>0$, and we assume that the time variable $t$ belongs to the compact interval $[0,T]$. 
Taking into account that the local time is the density of the occupation measure, for every $g\in\cup_{w>0}\Xi_w$, the process
\begin{align*}
Z_{t}^{(n)}[g]
  &:=n^{\frac {H+1}2}\ell_{n,H}\left( \int_0^{t } g(n^H(B_{s}-\lambda)) ds-n^{-H}L_{t}(\lambda)\int_{\R}g(x)dx\right),
\end{align*}
can be rewritten in the   form
\begin{align}\label{eq:Ztanaka}
Z_{t}^{(n)}[g]
  &=n^{\frac {1-H}2}\ell_{n,H}\left(n^H\int_{\RR} g(n^{H}(x-\lambda))L_t(x)dx - \int_{\RR}g(x)L_t(\lambda)dx\right)\nonumber\\
  &=n^{\frac {1-H}2} \ell_{n,H}  \int_{\RR} g( x)  \big(L_t(n^{-H}x+\lambda)-L_t(\lambda)\big)dx,
\end{align}
where the second equality follows from the  change of variables  $n^H(x-\lambda) \to x$. The first step in our proof consists in using \eqref{eq:Ztanaka} 
to write $Z_t^{(n)}[g]$ as the sum of a suitable martingale and a negligible reminder, 
which will later allow us to show the convergence \eqref{eq:thmmain} by means 
of an asymptotic version of Knight's theorem. To achieve this, we  use the stochastic integral representation \eqref{eq:stochrep} to write the spacial increment of 
the local time appearing on the right hand side of \eqref{eq:Ztanaka}, in the form
\begin{align*}
L_t(\lambda+\frac{x}{n^H})-L_t(\lambda)
 &= \int_0^t\big(p_{s^{2H}}(\frac{x}{n^H}+\lambda)-p_{s^{2H}}(\lambda)\big)ds\\
  &+  \int_0^t\int_r^t \big(p'_{\mu_{r,s}} (B_{r,s} -\frac x {n^H}-\lambda)-p'_{\mu_{r,s}} (B_{r,s} -\lambda)\big)  K_H(s,r)dsdW_r,
\end{align*}
where  we recall that $B_{r,s}= \int_0^r K_H(s,\theta) dW_\theta$ and $\mu_{r,s}= \int_r^s K_H^2(s,\theta) d\theta$. This identity, combined with a stochastic 
Fubini's theorem yields
\begin{align}\label{eq:ZFandRdecomp}
Z_{t}^{(n)}[g]
  &=n^{\frac{1-H}{2}}  \ell_{n,H} \int_0^{t}G_{r,t}^{(g,n)}dW_{r}+n^{\frac{1-H}{2}}  \ell_{n,H}  R_{t}^{(g,n)},
\end{align}
where 
\begin{equation}
G_{r,t}^{(g,n)}
  :=\int_\R\int_r^t g(x)\big(p'_{\mu_{r,s}} (B_{r,s} -\frac x {n^H}-\lambda)-p'_{\mu_{r,s}} (B_{r,s} -\lambda)\big)K_H(s,r)dsdx,\label{eq:Fndef}
  \end{equation}
  and
  \[
R_{g,t}^{(n)}
  :=\int_\R\int_0^tg(x)\big(p_{s^{2H}}(\frac{x}{n^H}+\lambda)-p_{s^{2H}}(\lambda)\big)dsdx.
\]
Next we show that the term $R_{t}^{(g,n)}$ satisfies
\begin{align}\label{eq:Rlimitiszero}
\lim_{n\rightarrow\infty}\sup_{0\leq t\leq T}\ell_{n,H}n^{\frac{1-H}{2}}|R_{t}^{(g,n)}|
  &=0.
\end{align}
To this end, using \eqref{eq:fourierGaussker} and making the change of variables $\xi s^H \to \xi$ and $sn \to s$, we can write
\begin{align*}
\int_{0}^{t}\big(p_{s^{2H}}(\lambda+n^{-H}x)-p_{s^{2H}}(\lambda)\big)ds
  &=\frac{1}{2\pi}\int_{0}^t\int_{\R}e^{-\frac{1}{2}s^{2H}\xi^2 -\textbf{i}\xi\lambda}\big(e^{\textbf{i}\frac{\xi x}{n^H}}-1\big)d\xi ds\\
	&=\frac{1}{2\pi}\int_{0}^t\int_{\R}e^{-\frac{1}{2}\xi^2 -\textbf{i}\frac{\xi\lambda}{s^H}}\big(e^{\textbf{i}\frac{\xi x}{(sn)^H}}-1\big)s^{-H}d\xi ds\\
	&=\frac{1}{2\pi}n^{H-1}\int_{0}^{nt}\int_{\R}e^{-\frac{1}{2}\xi^2 -\textbf{i}\frac{n^H\xi\lambda}{s^H}}\big(e^{\textbf{i}\frac{\xi x}{s^H}}-1\big)s^{-H}d\xi ds.
\end{align*}
As a consequence,
\begin{align*}
\limsup_{n\rightarrow\infty}\sup_{0\leq t\leq T}\ell_{n,H}n^{\frac{1-H}{2}}|R_{g,t}^{(n)}|
  &\leq \frac 1{2\pi}  \limsup_{n\rightarrow\infty}\ell_{n,H}n^{\frac{H-1}{2}}\int_{0}^{nT}\int_{\R^2}e^{-\frac{1}{2}\xi^2}
	\big|e^{\textbf{i}\frac{\xi x}{s^H}}-1\big|s^{-H}|g(x)|dxd\xi ds\\
	&\leq 2\limsup_{n\rightarrow\infty}\ell_{n,H}n^{\frac{H-1}{2}}\int_{0}^{1}\int_{\R^2}e^{-\frac{1}{2}\xi^2}s^{-H}|g(x)|dxd\xi ds\\
	&+  \limsup_{n\rightarrow\infty}\ell_{n,H}n^{\frac{H-1}{2}}\int_{1}^{nT}\int_{\R^2}e^{-\frac{1}{2}\xi^2}
	\big|x\xi\big|s^{-2H}|g(x)|dxd\xi ds,
\end{align*}
where the last inequality follows by applying the bound
$$\big|e^{\textbf{i}\frac{\xi x}{s^H}}-1\big|\leq \Indi{\{s\geq 1\}}\big|x\xi\big|s^{-H}+2\Indi{\{s\leq 1\}}.$$ 
From here it follows that 
\begin{align}\label{eq:integralmeanisfiniteww}
\limsup_{n\rightarrow\infty}\sup_{0\leq t\leq T}\ell_{n,H}n^{\frac{1-H}{2}}|R_{g,t}^{(n)}|
  &\leq C\limsup_{n\rightarrow\infty}\ell_{n,H}	n^{\frac{H-1}{2}}\int_{1}^{nT}s^{-2H}ds,
\end{align}
for some constant $C>0$ depending only depending on $g$. Identity \eqref{eq:Rlimitiszero} easily follows from \eqref{eq:integralmeanisfiniteww}, taking into account that
\[
\int_{1}^{nT}s^{-2H}ds =
\begin{cases}
\log(nT) &  \text{if} \,\,  H=\frac 12 \\
\frac 1  {1-2H} [(nT)^{1-2H} -1] &  \text{if} \,\,  H\not=\frac 12
\end{cases}
\]
and $\frac {H-1} 2 + 1-2H= \frac   {1-3H} 2 \le 0$ if $H \ge \frac 13$.

Fix a vector-valued function  $f=(f_{1}\dots, f_{d})$ with 
$f_{i}\in\Xi_1$ if $H>\frac 13$ and $f_i \in \Xi_2$ if $H=\frac 13$.
  From \eqref{eq:ZFandRdecomp} and \eqref{eq:Rlimitiszero}, we deduce that to prove Theorem \ref{thm:main} 
it suffices to show that for every $t:=(t_{1},\dots, t_{Q})\in[0,T]^Q$ and 
$\rho_{i,j}\in\R$, $1\leq i\leq Q$, $1\leq j\leq d$, the martingale	$M^{(n)}:=\{M_u^{(n)}\ ;\ u\in[0,T]\}$, defined by
$$M_u^{(n)}:=n^{\frac{1-H}{2}}\ell_{n,H}\sum_{i=1}^{Q} \sum_{j=1}^d \rho_{i,j}\int_{0}^{u}F_{r,t_i}^{(f_j,n)}dW_{r},$$
with 
\begin{align*}
F_{r,t_i}^{(f_j,n)}
  &:=\int_\R\int_r^{t_{i}} f_{j}(x)\big(p'_{\mu_{r,s}} (B_{r,s} -\frac x {n^H}-\lambda)-p'_{\mu_{r,s}} (B_{r,s} -\lambda)\big)K_H(s,r)dsdx,
\end{align*}
with the convention $F_{r,t_i}^{(f_j,n)} =0$ if $r\ge t_i$,
satisfies 
\begin{align*}
M_u^{(n)}
  &\stackrel{Law}{\rightarrow} \sum_{i=1}^Q\sum_{j=1}^{d}\rho	_{i,j}(\mathcal{C}_{H}[f]^{\frac{1}{2}}\wt{W}_{L_{t_i\wedge u}(\lambda)})_{j},
\end{align*}
where $\mathcal{C}_{H}[f]$ is defined as in Theorem \ref{thm:main} and $(\mathcal{C}_{H}[f]^{\frac{1}{2}} \wt{W}_{L_{t_i\wedge u}(\lambda)})_{j}$ denotes the $j$-th component of $\mathcal{C}_{H}[f]^{\frac{1}{2}}\wt{W}_{L_{t_i\wedge u}(\lambda)}$. By the asymptotic version of Knight's theorem 
(as presented in \cite[Chapter~XIII,~Theorem~2.3]{RY}), it suffices to show that the following two conditions hold:

\noindent\textbf{(C1)} For every $0\leq u\leq T$, $\langle M^{(n)}\rangle_u$ converges in probability, and 
\begin{align*}
\langle M^{(n)}\rangle_u
& =n^{1-H}\ell_{n,H}^2\int_{0}^{u}\left|\sum_{i=1}^Q\sum_{j=1}^{d}\rho_{i,j}F_{r,t_i}^{(f_j,n)}\right|^2dr\\
& \stackrel{\Pb}{\rightarrow}\sum_{1\leq i_{1},i_{2}\leq  Q}\sum_{1\leq j_{1},j_{2}\leq d}\rho_{i_{1},j_{1}}\rho_{i_{2},j_{2}}\mathcal{A}_H[f_{j_{1}},f_{j_{2}}](L_{t_{i_{1}\wedge u }}(\lambda)\wedge L_{t_{i_{2}\wedge u}}(\lambda)).
\end{align*}

\noindent\textbf{(C2)} For every $u\in[0,T]$, 
\[
\langle M^{(n)},W\rangle_u=n^{\frac{1-H}{2}}\ell_{n,H}\sum_{i=1}^Q \sum_{j=1}^d \rho_{i,j} \int_{0}^{u}F_{r,t_i}^{(f_j,n)}dr\stackrel{\Pb}{\rightarrow}0.
\]

By linearity, to show properties  \textbf{(C1)}  and \textbf{(C2)}, it suffices to prove that, for any $t_1,t_2 \in [0,T]$ and any two functions
$f,g $ such that $f,g\in \Xi_1$ if $H>\frac 13$ and $f,g\in \Xi_2$ if $H=\frac 13$, we have the following two properties:

\noindent\textbf{(A1)} For any  $t_1,t_2 \in [0,T]$ 
\begin{equation}  \label{C1}
n^{1-H}\ell_{n,H}^2\int_{0}^{u} F_{r,t_1}^{(f,n)} F_{r,t_2}^{(g,n)} dr
\stackrel{\Pb}{\rightarrow} \mathcal{A}_H[f,g](L_{t_1\wedge u}(\lambda)\wedge L_{t_2\wedge u}(\lambda)).
\end{equation}

\noindent\textbf{(A2)}   For any  $t \in [0,T]$  
\begin{equation} \label{C2}
n^{\frac{1-H}{2}}\ell_{n,H}  \int_{0}^{u}F_{r,t}^{(f,n)}dr\stackrel{\Pb}{\rightarrow}0.
\end{equation}

\subsection{Proof of \textbf{(A1)}}
  Our analysis is divided into several steps. First, we use 
Fourier transform to find a decomposition for 
$F_{r,t_1}^{(f,n)} F_{r,t_2}^{(g,n)} $ as a sum of three processes $\Lambda_{1,r}^{(n,m)}+\Lambda_{2,r}^{(n,m)}+\Lambda_{3,r}^{(n,m)}$ 
that will allow us to easily identify the asymptotic behavior of $F_{r,t_1}^{(f,n)} F_{r,t_2}^{(g,n)} $, for $r$ given. Then, in a series of subsequent steps we will 
analyze individually the behavior of $\int_0^u\Lambda_{a,r}^{(n,m)}dr$, for $a=1,2,3$, which will ultimately lead to \eqref{C1}.\\

\noindent \textbf{Step I}\\
In order to obtain  a more convenient expression for $F_{r,t}^{(f,n)}$, where $0\leq r\leq t\leq T$,
we  use \eqref{eq:Fndef} together with the change of variables  $s\to s/n$  to deduce that  
\begin{align}\label{eq:Ftfn}
F_{r,t}^{(f,n)}
  &=\int_\R\int_0^{t-r} f (x)\big(p'_{\mu_{r,r+s}} (B_{r,r+s} -\frac x {n^H}-\lambda)-p'_{\mu_{r,r+s}} (B_{r,r+s} -\lambda)\big) 
	K_{H}(r+s,r)dsdx\\
	&=\frac{1}{n}\int_\R\int_0^{n(t-r)} f(x)\big(p'_{\mu_{r,r+\frac{s}{n}}}
	(B_{r,r+\frac{s}{n}} -\frac x {n^H}-\lambda)-p'_{\mu_{r,r+\frac{s}{n}}} (B_{r,r+\frac{s}{n}} -\lambda)\big)K_H(r+\frac{s}{n},r)dsdx\nonumber\\
	&=\frac{1}{2\pi n}\int_\R\int_0^{n(t-r)}\int_{\R} f(x)  (- \text{i}\xi) e^{-\frac{1}{2}\mu_{r,r+\frac{s}{n}} \xi^2-\textbf{i}\xi
	(B_{r,r+\frac{s}{n}} -\lambda)}(e^{\textbf{i}\frac{x \xi}{n^H}}-1\big)K_H(r+\frac{s}{n},r)d\xi dsdx.\nonumber
\end{align}
As a consequence, if $t_1,t_2 \in [0,T]$, we have
\begin{align}
F_{r,t_1}^{(f,n)}F_{r,t_2}^{(g,n)}
	&=\frac{-1}{4\pi^2 n^2}\int_{\R^4} \int_0^{n(t_1-r)} \int_0^{n(t_2-r)}  f(x ) g(\wt{x})
	 \xi \wt{\xi} e^{-\frac{1}{2}(   \mu_{r,r+\frac{s_1}{n}} \xi ^2 + \mu_{r,r+\frac{s_2}{n}} \wt{\xi}^2)} \\
	 & \qquad  \times e^{     -\textbf{i}\xi 
	(B_{r,r+\frac{s_1}{n}} -\lambda) - \textbf{i}\wt{\xi}
	(B_{r,r+\frac{s_2}{n}} -\lambda)}
\big(e^{\textbf{i}\frac{x _1\xi }{n^H}}-1\big)(e^{\textbf{i}\frac{\wt{x}  \wt{\xi}}{n^H}}-1\big)\\
& \qquad \times K_H(r+\frac{s_1}{n},r)
	K_H(r+\frac{s_2}{n},r) d\vec{s}  d\vec{\xi}  d\vec{x},
\end{align}
where $\vec{\xi}:=(\xi , \wt{\xi})$, $\vec{s}:=(s_1\,s_2)$ and $\vec{x}:=(x , \wt{x})$. 
The next step is to decompose the integration on $\vec{\xi}$ into the regions $\{|\wt{\xi}| \le |\xi |\}$ and $\{|\wt{\xi}| > |\xi |\}$. By a symmetry argument, it suffices to treat one of these regions. In fact,   we can write
\begin{equation} \label{dec1}
F_{r,t_1}^{(f,n)}F_{r,t_2}^{(g,n)} :=\Lambda^{(n)}_r(f,g,t_1,t_2) + \Lambda^{(n)}_r(g,f,t_2,t_1),
\end{equation}
where
\begin{align*}
\Lambda^{(n)}_r(f,g,t_1,t_2) 
 &:=\frac{-1}{4\pi^2 n^2}\int_{\R^4} \int_0^{n(t_1-r)} \int_0^{n(t_2-r)} \Indi{ \{|\wt{\xi}| \le |\xi |\}}    f(x ) g(\wt{x})  
	 \xi  \wt{\xi} e^{-\frac{1}{2}(   \mu_{r,r+\frac{s_1}{n}} \xi^2_1 + \mu_{r,r+\frac{s_2}{n}} \wt{\xi}^2)} \\
	 & \qquad  \times e^{     -\textbf{i}\xi 
	(B_{r,r+\frac{s_1}{n}} -\lambda) - \textbf{i}\wt{\xi}
	(B_{r,r+\frac{s_2}{n}} -\lambda)}
\big(e^{\textbf{i}\frac{x _1\xi }{n^H}}-1\big)(e^{\textbf{i}\frac{ \wt{x}  \wt{\xi}}{n^H}}-1\big)\\
& \qquad \times K_H(r+\frac{s_1}{n},r)
	K_H(r+\frac{s_2}{n},r) d\vec{s}  d\vec{\xi}  d\vec{x}.
\end{align*}
To simplify the presentation we write  $\Lambda_r^{(n)}$ for $\Lambda^{(n)}_r(f,g,t_1,t_2)$.
Next we apply the change of coordinates $\eta:=n^{-H}\xi $ and $\wt{\eta}:=\xi +\wt{\xi}$  and we use the notation
\begin{align}
\kappa_{r,\vec{s}}^{(n)}
  &:=n^{2H-1}(s_1s_2)^{\frac{1}{2}-H}K_H(r+\frac{s_1}{n},r)K_H(r+\frac{s_2}{n},r)\label{eq:kappadef}\\
\alpha_{r,\vec{s},\vec{\eta}}^{(n)}
  &:=n^{2H}\mu_{r,r+\frac{s_1}{n}} \eta^2+n^{2H}\mu_{r,r+\frac{s_2}{n}} (n^{-H}\wt{\eta}-\eta)^2\label{eq:alphadef}\\
\beta_{r,\vec{s}, {\eta}}^{(n)}
  &:=n^H\eta(B_{r,r+\frac{s_1}{n}} - B_{r,r+\frac{s_2}{n}})\label{eq:beta1def}\\ 
\psi_{\vec{x},\vec{\eta}}^{(n)}
  &:=(e^{\textbf{i}  x \eta }-1\big)(e^{\textbf{i}\wt{x}(n^{-H}\wt{\eta}-\eta)}-1\big)\label{eq:psidef}
\end{align}
for $r>0$, $\vec{s}:=(s,s_2)\in\R_{+}^2$, $\vec{\eta}:=(\eta,\wt{\eta})\in\R^2$ and $n\in\N$,
to obtain
\begin{align*}
\Lambda^{(n)}_r
 &:=\frac{-1}{4\pi^2 } n^{H-1}\int_{\R^4} \int_0^{n(t_1-r)} \int_0^{n(t_2-r)}  \Indi{\{|n^{-H}\wt{\eta}-\eta|\leq |\eta|\}}\eta(n^{-H}\wt{\eta}-\eta)   f(x ) g(\wt{x})   \\
	& \qquad  \times   (s_1s_2)^{H-\frac 12}
 e^{-\frac{1}{2}  \alpha_{r,\vec{s},\vec{\eta}}^{(n)}} e^{-\textbf{i}\beta_{r,\vec{s},\eta}^{(n)}-\textbf{i}\wt{\eta} (B_{r,r+\frac{s_2}{n}} -\lambda)}
\psi_{\vec{x},\vec{\eta}}^{(n)} 
\kappa_{r,\vec{s}}^{(n)}  d\vec{s}  d\vec{\xi}  d\vec{x}.
\end{align*}
Now, we fix $m\ge  1$ and we make the decomposition
\[
n^{1-H}\Lambda^{(n)}_r= \Lambda^{(n,m)}_{1,r}+ \Lambda^{(n,m)}_{2,r}+ \Lambda^{(n,m)}_{3,r},
\]
where
\begin{align}
\Lambda^{(n,m)}_{1,r}  \notag
 &:=\frac{-1}{4\pi^2 }  \int_{\R^4} \int_0^{n(t_1-r)} \int_0^{n(t_2-r)}  \Indi{\{|n^{-H}\wt{\eta}-\eta|\leq |\eta|\}}\eta(n^{-H}\wt{\eta}-\eta) 
 \Indi{ \{| \wt{\eta} | \le m\}}   f(x) g(\wt{x})   \\   \label{Lambda1}
	& \qquad  \times   (s_1s_2)^{\frac{1}{2}-H}
 e^{-\frac{1}{2} ( \alpha_{r,\vec{s},\vec{\eta}}^{(n)} + \E [(\beta^{(n)}_{r,\vec{s}, \eta})^2]   } e^{-\textbf{i}\wt{\eta} (B_{r,r+\frac{s_2}{n}} -\lambda)}
\psi_{\vec{x},\vec{\eta}}^{(n)} 
\kappa_{r,\vec{s}}^{(n)}  d\vec{s}  d\vec{\xi}  d\vec{x},
\end{align}
\begin{align}
\Lambda^{(n,m)}_{2,r}   \notag
 &:=\frac{-1}{4\pi^2 }  \int_{\R^4} \int_0^{n(t_1-r)} \int_0^{n(t_2-r)}  \Indi{\{|n^{-H}\wt{\eta}-\eta|\leq |\eta|\}}\eta(n^{-H}\wt{\eta}-\eta) 
 \Indi{\{ | \wt{\eta} | \le m\}}   f(x) g(\wt{x})   \\  \label{Lambda2}
	& \qquad  \times   (s_1s_2)^{\frac{1}{2}-H}
 e^{-\frac{1}{2}  \alpha_{r,\vec{s},\vec{\eta}}^{(n)} }  \left(e^{-\textbf{i}\beta_{r,\vec{s},\eta}^{(n)} }- e^{ -\frac 12 \E [(\beta^{(n)}_{r,\vec{s}, \eta})^2]   }  \right) e^{-\textbf{i}\wt{\eta} (B_{r,r+\frac{s_2}{n}} -\lambda)}
\psi_{\vec{x},\vec{\eta}}^{(n)} 
\kappa_{r,\vec{s}}^{(n)}  d\vec{s}  d\vec{\xi}  d\vec{x},
\end{align}
and
\begin{align}   \notag
\Lambda^{(n,m)}_{3,r}
 &:=\frac{-1}{4\pi^2 }  \int_{\R^4} \int_0^{n(t_1-r)} \int_0^{n(t_2-r)}  \Indi{\{|n^{-H}\wt{\eta}-\eta|\leq |\eta|\}}\eta(n^{-H}\wt{\eta}-\eta) 
  \Indi{\{ | \wt{\eta} | > m\}}    f(x) g(\wt{x})   \\  \label{Lambda3}
	& \qquad  \times   (s_1s_2)^{\frac{1}{2}-H}
 e^{-\frac{1}{2}  \alpha_{r,\vec{s},\vec{\eta}}^{(n)}} e^{-\textbf{i}\beta_{r,\vec{s},\eta}^{(n)}-\textbf{i}\wt{\eta} (B_{r,r+\frac{s_2}{n}} -\lambda)}
\psi_{\vec{x},\vec{\eta}}^{(n)} 
\kappa_{r,\vec{s}}^{(n)}  d\vec{s}  d\vec{\xi}  d\vec{x}.
\end{align}
The above decomposition 
follows by first splitting the domain of integration  in $\wt{\eta}$ into $\{|\wt{\eta}| \le m\}$ and  $\{|\wt{\eta}| > m\}$, and then expressing 
$e^{\textbf{i} \beta_{r,\vec{s},\vec{\eta}}^{(n)}}$ 
as 
$$e^{\textbf{i} \beta_{r,\vec{s},{\eta}}^{(n)}}=e^{-\frac{1}{2}\E[(e^{\textbf{i} \beta_{r,\vec{s},{\eta}}^{(n)}})^2]}+
(e^{\textbf{i} \beta_{r,\vec{s},{\eta}}^{(n)}}-e^{-\frac{1}{2}\E[(e^{\textbf{i} \beta_{r,\vec{s},{\eta}}^{(n)}})^2]}).$$

Our goal is now to show the following three convergences:
\begin{align}\label{eq:integrallambdatolt}
\lim_{m\rightarrow\infty}\lim_{n\rightarrow\infty} \ell_{n,H}^2\int_{0}^u\Lambda_{1,r}^{(n,m)}dr
  &= \frac 12 \mathcal{A}_H[f,g](L_{t_1 \wedge u}(\lambda)\wedge L_{t_2\wedge u}(\lambda)) \quad \text{in} \quad L^2(\Omega),
\end{align}
\begin{align}\label{eq:goalStepIII}
\lim_{n\rightarrow\infty}  \ell_{n,H}^2  \left \|\int_{0}^u{\Lambda}_{2,r}^{(n,m)}dr \right\|_{L^2(\Omega)}
  &=0 \quad \text{for all} \quad m\ge 1
\end{align}
and
\begin{align}\label{eq:StepIVgoal}
\lim_{m\rightarrow\infty}\sup_{n}\ell_{n,H} ^2 \left \|\int_0^u\Lambda_{3,r}^{(n,m)}dr \right \|_{L^{2}(\Omega)}\rightarrow0,
\end{align}
where   $\mathcal{A}_H[f,g]$ is defined in  \eqref{eq:AHfdefHbig}  for $H>\frac 13$ and \eqref{eq:AHfdefHsmall}  for $H=\frac 13$. 
Then,  \textbf{(A1)} will follow from the convergences \eqref{eq:integrallambdatolt}, \eqref{eq:goalStepIII} and \eqref{eq:StepIVgoal},
taking into account the decomposition  \eqref{dec1} and the fact that the limit in  \eqref{eq:integrallambdatolt} is symmetric in $(f,g)$ and $(t_1,t_2)$.

\medskip
\noindent \textbf{Step II}\\
In this step we will show the convergence \eqref{eq:integrallambdatolt}.
  By applying Fubini's theorem, we can write 
\begin{align*}
\int_0^u\Lambda_{1,r}^{(n,m)}dr
  &=\int_{0}^{nt_1}\int_{0}^{nt_2} \Psi_{\vec{s}}^{(n,m)}d\vec{s},
\end{align*}
where
\begin{align*}
\Psi_{\vec{s}}^{(n,m)}
  &:=\frac{-1}{4\pi^2}   \int_0^{(t_1-\frac {s_1}n) \wedge (t_2 - \frac {s_2} n) \wedge u }  \int_{\R^4 }  
	\Indi{\{  |n^{-H}\wt{\eta}-\eta|\leq |\eta|\}}   \Indi{ \{ | \wt{\eta} | \le m\}} \eta(n^{-H}\wt{\eta}-\eta)
	 f(x) g(\wt{x})   \\
	& \qquad  \times   (s_1s_2)^{\frac{1}{2}-H}
 e^{-\frac{1}{2} ( \alpha_{r,\vec{s},\vec{\eta}}^{(n)} + \E [(\beta^{(n)}_{r,\vec{s}, \eta})^2]   } e^{-\textbf{i}\wt{\eta} (B_{r,r+\frac{s_2}{n}} -\lambda)}
\psi_{\vec{x},\vec{\eta}}^{(n)} 
\kappa_{r,\vec{s}}^{(n)}  dr  d\vec{\eta}  d\vec{x}.
\end{align*}
 Recall that $\beta_{H,1}$ and  $\beta_{H,2}$  are defined in \eqref{eq:betadmain} and  $\beta_{H,2}(\vec{s})$ is given in \eqref{eq:c2Hssdefmain}.  
We observe that \eqref{eq:integrallambdatolt} 
can be obtained by proving the following statements:
\begin{enumerate}
\item[(i)] The function 
\begin{align}\label{eq:Psisinfmdef}
\Psi_{\vec{s}}^{(\infty,m)}
  &:=\frac{\beta_{H,1}^2}{4\pi^2} \int_0^{t_{i_{1}}\wedge t_{i_{2}}\wedge u}
	\int_{\R^4}  \Indi{ \{ | \wt{\eta} | \le m\}} \eta^2(s_1s_2)^{H-\frac{1}{2}}	 f(x) g(\wt{x})  \nonumber\\
	&\times 
	e^{-\frac{1}{2}(\beta_{H,2}(s_1^{2H}+s_2^{2H})+\beta_{H,3}(s,s_2))\eta^2-\textbf{i}\wt{\eta} (B_{r} -\lambda)}
	(e^{\textbf{i} x \eta }-1\big)\big(1-e^{-\textbf{i}\eta \wt{x}}\big) dr d\vec{\eta} d\vec{x},
\end{align}
satisfies
\begin{align}\label{eq:Psininfgoal}
\lim_{n\rightarrow \infty}\big{\|}  \ell^2_{n,H}\int_{[0,nt_1]\times [0,nt_2]}(\Psi_{\vec{s}}^{(n,m)}-\Psi_{\vec{s}}^{(\infty,m)})d\vec{s}\big{\|}_{L^2(\Omega)}^2=0.
\end{align}

\item[(ii)] The convergence
\begin{equation}\label{eq:Psininfgoal2}
\lim_{n\rightarrow \infty} 
\ell_{n,H}^2\int_{[0,nt_1]\times [0,nt_2]}\Psi_{\vec{s}}^{(\infty,m)}d\vec{s}\\
  = \frac 12\mathcal{A}_{H}[f,g] \frac{1}{2\pi}\int_0^{t_{i_{1}}\wedge t_{i_{2}}\wedge u}\int_{[-m,m]} 
	e^{-\textbf{i}\wt{\eta} (B_{r} -\lambda)}d\wt{\eta}dr,
\end{equation}
holds in the topology of $L^{2}(\Omega)$.
\end{enumerate}

\medskip
\noindent
{\it Proof of  \eqref{eq:Psininfgoal}:}
Before giving the details of the proof,  and in order to clarify the presentation, we provide first some heuristic arguments.
Recall definitions (\ref{eq:kappadef}) to (\ref{eq:psidef}), and define
\[
\Theta_{\vec{\eta},\vec{s}}^{(n,m)}
  :=(s_1s_2)^{\frac 12-H} e^{-\frac{1}{2}(\alpha_{r,\vec{s},\vec{\eta}}^{(n)}+\E[(\beta_{r,\vec{s},{\eta}}^{(n)})^2])}
	\psi_{\vec{s},\vec{\eta}}^{(n)}\kappa_{r,\vec{s}}^{(n)}
	\]
	and
		\[
\Theta_{\vec{\eta},\vec{s}}^{(\infty,m)}
  :=\beta_{H,1}^2e^{-\frac{1}{2}(\beta_{H,2}(s_1^{2H}+s_2^{2H})+\beta_{H,3}(s_1,s_2))\eta^2}
	(e^{\textbf{i} x \eta }-1\big)\big(1-e^{-\textbf{i}\eta \wt{x}}\big).
\]
Then, we have $\lim_{n\rightarrow \infty} 	\Theta_{\vec{\eta},\vec{s}}^{(n,m)}= \Theta_{\vec{\eta},\vec{s}}^{(\infty,m)}$.
Indeed, this convergence follows immediately from \eqref{eq:Ktolimrate},  \eqref{eq:murrapproxbound}
and \eqref{eq:betatermvariancelimit}. As a consequence, the integrand of  $\Psi_{\vec{s}}^{(n,m)}$ converges point-wise to that of
$\Psi_{\vec{s}}^{(\infty,m)}$.   Nevertheless, to show the convergence of the integrals we need some additional work.

We first consider the region of integration $[\ep,t_1\wedge t_2 \wedge u]$ for the variable $r$, 
where $\ep\in (0, t_1 \wedge t_2\wedge u)$ is some given positive constant. Define
$\bar{f}:= \max\{|f|,| g |\}$. Notice that
	\begin{align*}
\Psi_{\vec{s}}^{(n,m)}
  &:=\frac{-1}{4\pi^2}n^{-1-2H}   \int_0^{(t_1-\frac sn) \wedge (t_2 - \frac {s_2} n) \wedge u }  \int_{\R^4 }  
	\Indi{\{  |n^{-H}\wt{\eta}-\eta|\leq |\eta|\}}   \Indi{ \{ | \wt{\eta} | \le m\}} \eta(n^{-H}\wt{\eta}-\eta)
	 f(x) g(\wt{x})   \\
	& \qquad  \times   (s_1s_2)^{\frac{1}{2}-H}  \Theta_{\vec{\eta},\vec{s}}^{(n,m)} 
  e^{-\textbf{i}\wt{\eta} (B_{r,r+\frac{s_2}{n}} -\lambda)}
   dr  d\vec{\xi}  d\vec{x}.
\end{align*}
Therefore,  there exists a constant $C>0$ such that 
\begin{align}\label{eq:l2norm}
\|\Psi_{\vec{s}}^{(n,m)}-\Psi_{\vec{s}}^{(\infty,m)}\|_{L^{2}(\Omega)}
  &\leq C(T_1(\vec{s})+T_2(\vec{s})+T_3(\vec{s})+T_4(\vec{s})),
\end{align}
where 
\begin{align*}
T_1(\vec{s})
  &:= \int_{\ep}^{T}\int_{\R^4}  \Indi{\{\frac{|\eta|}{2}\leq |n^{-H}\wt{\eta}-\eta|\leq |\eta|\}}
  \Indi{ \{ | \wt{\eta} | \le m\}} \eta^2(s_1s_2)^{H-\frac{1}{2}}\\
	& \qquad \times  \bar{f}(x)\bar{f}(\wt{x})
	|\Theta_{\vec{\eta},\vec{s}}^{(n,m)}-\Theta_{\vec{\eta},\vec{s}}^{(\infty,m)}|d\wt{\eta}d\eta d\vec{x}dr\nonumber\\
T_2	(\vec{s})
  &:=\int_{0}^{T}\int_{\R^4} \Indi{\{|n^{-H}\wt{\eta}-\eta|<\frac{|\eta|}{2}\}}    \Indi{ \{ | \wt{\eta} | \le m\}}  \eta^2(s_1s_2)^{H-\frac{1}{2}}\nonumber\\
	&\qquad \times \bar{f}(x)\bar{f}(\wt{x})
	(|\Theta_{\vec{\eta},\vec{s}}^{(n,m)}| + |\Theta_{\vec{\eta},\vec{s}}^{(\infty,m)}|)d\wt{\eta}d\eta d\vec{x}dr\nonumber\\
T_3(\vec{s})
  &:=\int_{\ep}^{T}\int_{\R^4} \Indi{\{\frac{|\eta|}{2}\leq |n^{-H}\wt{\eta}-\eta|\leq |\eta|\}}   \Indi{ \{ | \wt{\eta} | \le m\}} \eta^2(s_1s_2)^{H-\frac{1}{2}}\nonumber\\
	& \qquad \times 	\bar{f}(x)\bar{f}(\wt{x})  |\Theta_{\vec{\eta},\vec{s}}^{(n,m)}|
	\|e^{-\textbf{i}\wt{\eta} B_{r,r+\frac{s_2}{n}}}-e^{-\textbf{i}\wt{\eta}B_{r}}\|_{L^{2}(\Omega)}d\wt{\eta}d\eta d\vec{x}dr\nonumber\\
T_4(\vec{s})
  &:=\int_0^{\ep}\int_{\R^4} \Indi{\{\frac{|\eta|}{2}\leq |n^{-H}\wt{\eta}-\eta|\leq |\eta|\}}   \Indi{ \{ | \wt{\eta} | \le m\}} \eta^2(s_1s_2)^{H-\frac{1}{2}}\\
	& \qquad \times  \bar{f}(x)\bar{f}(\wt{x}) |(|\Theta_{\vec{\eta},\vec{s}}^{(n,m)}|+|\Theta_{\vec{\eta},\vec{s}}^{(\infty,m)}|)d\wt{\eta}d\eta d\vec{x}dr.
\end{align*}

\medskip
\noindent
{\it Estimation of the term $T_1(\vec{s})$:}  
Notice that we are integrating in a domain with the restrictions $\frac{|\eta|}{2}\leq|\eta-n^{-H}\wt{\eta}|\leq |\eta|$, $|\wt{\eta}|\leq m$ and $r\geq\ep$.  In the sequel we will denote by $C$ a constant that may depend on $m$,  $\ep$, $T$ and $H$. 
We claim that the following inequalities hold:
\begin{align}
|\kappa_{r,\vec{s}}^{(n)}-\beta_{H,1}^2|
  &\leq C \frac{s_1+s_2}{n},   \label{E1}\\
|\psi_{\vec{x},\vec{\eta}}^{(n)} - (e^{\textbf{i}x\eta}-1)(e^{-\textbf{i}\wt{x}\eta}-1)|
  &\leq C(1\wedge|x\eta|)(1\wedge|n^{-H}\wt{x}|), \label{E2} \\
|\alpha_{r,\vec{s},\vec{\eta}}^{(n)} - \beta_{H,2}(s_1^{2H}+s_2^{2H})\eta^2|
  &\leq C(s^{2H}+s_2^{2H})\left( \frac{s_1+s_2}{n}\eta^2  +  \frac{|\eta|}{n^{H}}\right), \label{E3}\\
|n^{2H}|\eta|^2\E[(B_{r,r+\frac{s_1}{n}} - B_{r,r+\frac{s_2}{n}})^2]-\beta_{H,3}(s_1,s_2)|\eta|^2|
  &\leq C \eta^2\left( (s_1\vee s_2)^2 n^{2H-2} + (s_1\vee s_2)^{2H+1} n^{-1}   \right)\label{E4}
\end{align}
and
\begin{align}\label{E5}
|\psi_{\vec{x},\vec{\eta}}^{(n)}|+|(e^{\textbf{i}x\eta}-1)(e^{\textbf{i}x\wt{\eta}}-1)|
  &\leq C(1\wedge|x\eta|)(1\wedge|\wt{x}\eta|),\\
|\kappa_{r,\vec{s}}^{(n)}|
  &\leq C\label{E6},\\
\alpha_{r,\vec{s},\vec{\eta}}^{(n)}
  &\geq    \delta(s_1^{2H}+s_2^{2H})\eta^2, \label{E7}
\end{align}
where $\delta>0$ denotes a constant depending on $H$.
 Indeed, inequality \eqref{E1} follows from Lemma \ref{lem5.4} taking into account that $r\ge \varepsilon$.   The estimate
 \eqref{E2} is straightforward.  In order to show \eqref{E3}, we write, using Lemma   \ref{lem:Mulimit}  and the estimates 
$ |\eta-n^{-H}\wt{\eta}|\leq |\eta|$ and $|\wt{\eta}|\leq m$,
 \begin{align*}
 |\alpha_{r,\vec{s},\vec{\eta}}^{(n)} - \beta_{H,2}(s_1^{2H}+s_2^{2H})\eta^2| & \le
 \eta^2 | n^{2H} \mu_{r, r+\frac {s_1}n} - \beta_{H,2} s_1^{2H}| + 
  \eta^2 | n^{2H} \mu_{r, r+\frac {s_2}n} - \beta_{H,2} s_2^{2H}|  \\
  & \quad 
  +n^{2H} \mu_{r, r+\frac {s_2}n} | (n^{-H} \wt{\eta} -\eta)^2 - \eta^2| \\
  & \le  C\eta^2 \frac {s_1^{2H+1} + s_2^{2H+1}}n +  C s_2^{2H}  \frac { |\eta|}{n^H}
  \end{align*}
 and this implies \eqref{E3}.   The estimate \eqref{E4} follows from \eqref{eq:betatermvariancelimit}.  The inequality \eqref{E5} is immediate, taking into account that  $|\eta-n^{-H}\wt{\eta}|\leq |\eta|$, \eqref{E6} follows from \eqref{ECU2} and, finally,  \eqref{E7} is due to
 the lower bounds in Lemma \ref{lem:Mulimit}.
 
Thus, using the fact that $|e^{-\alpha}-e^{-\beta}|\leq (1\wedge|\alpha-\beta|)(e^{-\alpha}+e^{-\beta})$, we obtain
\begin{align*}
|\Theta_{\vec{\eta},\vec{s}}^{(n,m)}-\Theta_{\vec{\eta},\vec{s}}^{(\infty,m)}|
  &\leq C\frac{s+s_2}{n}e^{-\delta (s_1^{2H}+s_2^{2H})\eta^2}(1\wedge|x\eta|)(1\wedge|\wt{x}\eta|)\\
	&+Ce^{-\delta (s_1^{2H}+s_2^{2H})\eta^2}(1\wedge|x\eta|)(1\wedge|n^{-H}\wt{x}|)\\
	&+ Ce^{-\delta (s_1^{2H}+s_2^{2H})\eta^2}(1\wedge|x\eta|)(1\wedge|\wt{x}\eta|)\\
	&\times  (s_1\vee s_2)^{2H} \left(   \eta^2 \frac{  s_1\vee s_2} n + \eta^2 \left( \frac  {s_1\vee s_2}n \right)^{2-2H}+ \frac 
	{|\eta|} {n^H} \right),
\end{align*}
where $\delta$ is  a constant depending on $H$.
From here we deduce that 
\begin{align*}
T_1(\vec{s})\leq C(T_{1}^{\prime}(\vec{s})+T_{1}^{\prime\prime}(\vec{s})+T_{1}^{\prime\prime\prime}(\vec{s})),
\end{align*}
where 
\begin{align*}
T_{1}^{\prime}(\vec{s})
  &:= \int_{\R^3}\eta^2(s_1s_2)^{H-\frac{1}{2}}\bar{f}(x)\bar{f}(\wt{x})
	 e^{-\delta (s_1^{2H}+s_2^{2H})\eta^2}
	(1\wedge|x\eta|)(1\wedge|\wt{x}\eta|)\\
	&\qquad \times (s_1\vee s_2)^{2H} \left(   \eta^2 \frac{  s_1\vee s_2} n + \eta^2 \left( \frac  {s_1\vee s_2}n \right)^{2-2H}+ \frac 
	{|\eta|} {n^H} \right)
	d\eta d\vec{x},
\end{align*}
\ 
\begin{align*}
T_{1}^{\prime\prime }(\vec{s})
  &:= \int_{\R^3}\eta^2(s_1s_2)^{H-\frac{1}{2}}\bar{f}(x)\bar{f}(\wt{x})
	 e^{-\delta (s_1^{2H}+s_2^{2H})\eta^2}n^{-H}|\wt{x}|(1\wedge|x\eta|)
	d\eta d\vec{x}.
\end{align*}
and 
\begin{align*}
T_{1}^{\prime\prime\prime}(\vec{s})
  &:= \int_{\R^3}\eta^2(s_1s_2)^{H-\frac{1}{2}}\bar{f}(x)\bar{f}(\wt{x})
	 e^{-\delta (s_1^{2H}+s_2^{2H})\eta^2}(1\wedge|x\eta|)(1\wedge|\wt{x}\eta|)\frac{s_1+s_2}{n}
	d\eta d\vec{x}.
\end{align*}
By Lemma \ref{lem:integralfourierfinite}, there exists a constant $C>0$, such that 
\begin{align*}
T_{1}^{\prime}(\vec{s})   &\leq  C \| \bar{f} \|_1^2 (1\vee \sqrt{ s_1^{2H} +s_2^{2H} })^{-2} ( s_1^{2H}+ s_2^{2H} )^{-\frac 52}
	( s_1 s_2)^{H-\frac 12}\\
	& \qquad \times  \left( \frac { (s_1\vee s_2)^{2H+1}} n + \frac { (s_1\vee s_2)^{2}} {n^{2-2H}} \right) \\
	& \qquad + \frac C {n^H} \| \bar{f} \|_1^2 (1\vee \sqrt{ s^{2H} +s_2^{2H} })^{-2} ( s^{2H}+ s_2^{2H} )^{-2}
	(s\vee s_2)^{2H}( s\wt {s})^{H-\frac 12}.
\end{align*} 
Integrating over the interval $[0,nT]^2$ yields
\begin{align}  \notag
\int_{ [0,nT]^2} T_{1}^{\prime}(\vec{s})   d\vec{s} & \le 
C \| \bar{f} \|_1^2 \int_0^{nT}  (1\vee \ s_2^H )^{-2}   \left[n^{-1}s_2^{1-H}   + n^{2H-2} s_2^{2-3H}  
+ n^{-H}  \right] ds_2  \\  \notag
&= C \| \bar{f} \|_1^2 \int_0^1 \left[n^{-1}s_2^{1-H}   + n^{2H-2} s_2^{2-3H}  
+ n^{-H}   \right] ds_2  \\  \notag
&\qquad + C \| \bar{f} \|_1^2 \int_1^{nT} \left[n^{-1}s_2^{1-3H}   + n^{-2} s_2^{2-3H}  
+ n^{-H} s_2^{-2H} \right] ds_2 \\  \label{R1}
& \le C \left( n^{-1} + n^{2H-2} + n^{-H} + n^{1-3H} \right).
\end{align}
By Lemma \ref{lem:integralfourierfinite}, there exists a constant $C>0$, such that 
\begin{align*}
T_{1}^{\prime\prime}(\vec{s})   &\leq  Cn^{-H} \| \bar{f} \|_1^2 (1\vee \sqrt{ s_1^{2H} +s_2^{2H} })^{-1} ( s_1^{2H}+ s_2^{2H} )^{-\frac 32}
	( s_1s_2)^{H-\frac 12}.
\end{align*} 
Integrating over the interval $[0,nT]^2$ yields
\begin{align}   \notag
\int_{ [0,nT]^2} T_{1}^{\prime\prime}(\vec{s})   d\vec{s} & \le 
Cn^{-H} \| \bar{f} \|_1^2  \int_0^{nT} (1\vee \ s_2^H )^{-1}   s_2^{-H} ds_2  \\  \notag
&\le Cn^{-H} \| \bar{f} \|_1^2  \left( \int_0^1     s_2^{-H} ds_2   + \int_1^{nT} s_2^{-1-H} ds_2 \right) \\
& \le  C \| \bar{f} \|_1^2 \left( n^{-H} + n^{-2H} \right).  \label{R2}
\end{align}
 Finally,  applying  Lemma \ref{lem:integralfourierfinite} again, there exists a constant $C>0$, such that 
\[
T_{1}^{\prime\prime \prime}(\vec{s})   \leq \frac  Cn  \| \bar{f} \|_1^2 (1\vee \sqrt{ s_1^{2H} +s_2^{2H} })^{-2} 
( s^{2H}+ s_2^{2H} )^{-\frac 32} s^{H-\frac 12} 
	\wt {s}^{H+\frac 12}.
\]
 Integrating over the interval $[0,nT]^2$ yields
\begin{align}   \notag
\int_{ [0,nT]^2} T_{1}^{\prime\prime\prime}(\vec{s})   d\vec{s} & \le 
\frac Cn  \| \bar{f} \|_1^2  \int_0^{nT} (1\vee \ s_2^H )^{-2}  s_2^{1-H}   ds_2  \\  \notag
&= \frac Cn  \| \bar{f} \|_1^2  \left( \int_0^{1}   s_2^{1-H}   ds_2  + \int_1^{nT} s_2^{1-3H}   ds_2 \right) \\
&\le C  \| \bar{f} \|_1^2  \left( n^{-1} + n^{1-3H}  \right).  \label{R3}
\end{align}
From \eqref{R1}, \eqref{R2} and \eqref{R3}, we obtain 
\begin{align}\label{eq:T1limitzero}
\limsup_{n\rightarrow\infty}\ell_{n,H}^2\int_{\varepsilon}^{T}\int_{[0,nT]^2}T_{1}(\vec{s})d\vec{s}dr
  &
  =0.
\end{align}

\medskip
\noindent
{\it Estimation of the term $T_2(\vec{s})$:}  
To bound $T_2(\vec{s})$, we observe that, in view of the estimate \eqref{E7},  there exist constants $C, \delta>0$, such that 
\begin{align*}
|\Theta_{\vec{\eta},\vec{s}}^{(n,m)}|+|\Theta_{\vec{\eta},\vec{s}}^{(\infty,m)}|
  &\leq Ce^{-\delta s_1^{2H}\eta^2}
	(1\wedge|x \eta |)(1\wedge|\eta \wt{x}|)
	\leq Ce^{-c_H s_1^{2H}\eta^2}
	 |x\wt{x}|\eta^2,
\end{align*}
which, combined with the inequality 
$$\Indi{\{|n^{-H}\wt{\eta}-\eta|<\frac{|\eta|}{2}\}}\leq \Indi{\{|\eta|\leq 2n^{-H}|\wt{\eta}|\}}
\leq \Indi{\{|\eta|\leq 2mn^{-H}\}},$$
leads to 
\begin{align*}
|T_2(\vec{s})|
  &\leq C \| \bar{f} \|_1^2 n^{-4H}\int_{\R} (s_1s_2)^{H-\frac{1}{2}}
	e^{-\delta  s_1^{2H}\eta^2} d\eta
	\leq C   \| \bar{f} \|_1^2  n^{-4H} s_2^{H-\frac{1}{2}}s_1^{-\frac{1}{2}}.
\end{align*}
Integrating $s$ and $s_2$ over $[0, nT]$, we obtain the inequality
\begin{align*}
\int_{[0,nT]^2}T_{2}(\vec{s})d\vec{s}
	&\leq C \| \bar{f} \|_1^2 n^{1-3H}.
\end{align*}
From here we conclude that
\begin{align}\label{eq:T2limitzero}
\lim_{n\rightarrow\infty}\ell_{n,H}^2\int_{0}^{T}\int_{[0,nT]^2}T_{2}(\vec{s})d\vec{s}dr
  &=0,
\end{align}
as required. 

\medskip
\noindent
{\it Estimation of the term $T_3(\vec{s})$:}  
For bounding $T_3(\vec{s})$, we observe that by \eqref{E7}, there exist constants $C,\delta>0$, such that 
\begin{align*}
|\Theta_{\vec{\eta},\vec{s}}^{(n,m)}|
  &\leq Ce^{- \delta (s_1^{2H}+s_2^{2H})\eta^2}(1\wedge|x\eta|)(1\wedge|\wt{x}\eta|).
\end{align*}
Moreover,
\begin{align*}
\|e^{-\textbf{i}\wt{\eta} B_{r,r+\frac{s_2}{n}}}-e^{-\textbf{i}\wt{\eta}B_{r}}\|_{L^{2}(\Omega)}
  &=\|1-e^{-\textbf{i}\wt{\eta} (B_{r,r+\frac{s_2}{n}} - B_{r})}\|_{L^{2}(\Omega)}
	\leq C(1\wedge|\wt{\eta}\E[( B_{r,r+\frac{s_2}{n}} - B_{r})^2]^{\frac{1}{2}}|),
\end{align*}
for some $C>0$.  Since 
\[
\E[( B_{r,r+\frac{s_2}{n}} - B_{r})^2] = \E[ ( \E (B_{r+\frac{s_2}{n}} - B_{r} | \mathcal{F}_r))^2] 
\le  \E[  (B_{r+\frac{s_2}{n}} - B_{r} )^2]  = (s_2/n)^{2H},
\]
we have
\begin{align}\label{eq:L2normcomplexexponentials}
\|e^{-\textbf{i}\wt{\eta} B_{r,r+\frac{s_2}{n}}}-e^{-\textbf{i}\wt{\eta}B_{r}}\|_{L^{2}(\Omega)}
	&\leq C(1\wedge( s_2/n)^H).
\end{align}
Thus, 
\begin{align*}
|T_3(\vec{s})|
  &\leq C\int_{\R^3} \eta^2(s_1s_2)^{H-\frac{1}{2}}
	\bar{f}(x)\bar{f}(\wt{x})
	e^{-\delta (s_1^{2H}+s_2^{2H})\eta^2}
	(1\wedge|x \eta |)(1\wedge|\eta \wt{x}|)(1\wedge ( s_2/n)^H )d\eta d\vec{x}.
\end{align*}
Applying Lemma \ref{lem:integralfourierfinite} to the right-hand side, we have that 
\begin{align*}
|T_3(\vec{s})|
  &\leq C \| \bar{f} \|^2_1   (1\wedge \sqrt{ s_1^{2H} + s_2^{2H}}) ^{-2} (s_1^{2H} + s_2^{2H})^{-\frac 32} 
  (ss_2)^{H-\frac 12} \left( \frac {s_2} n\right)^H,
\end{align*}
and consequently,
\begin{align*}
\int_{[0,nT]^2}T_{3}(\vec{s})d\vec{s}
  &\leq C \| \bar{f} \|^2_1    n^{-H}   \int_0^{nT} (1\wedge  s_2^{H})^{-2}  s_2^{-2H} ds_2 \\
  & \le C \| \bar{f} \|^2_1    n^{-H} \left(
  \int_1^{nT}   s^{-4H}  ds_2  +   \int_0^1   ds_2   \right) \\
  & \le   C \| \bar{f} \|^2_1  ( n^{1-3H} + n^{-H}).
\end{align*}
As a result,
\begin{align}\label{eq:T3limitzero}
\lim_{n\rightarrow\infty}\ell_{n,H}^2\int_{\varepsilon}^{T}\int_{[0,nT]^2}T_{3}(\vec{s})d\vec{s}dr
  &=0,
\end{align}
as required.

\medskip
\noindent
{\it Estimation of the term $T_4(\vec{s})$:}  
To estimate $T_{4}(\vec{s})$,  we observe that \eqref{E7} implies the existence of constants $C,\delta >0$ such that 
\begin{align*}
|\Theta_{\vec{\eta},\vec{s}}^{(n,m)}|+|\Theta_{\vec{\eta},\vec{s}}^{(\infty,m)}|
  &\leq Ce^{-\delta (s_1^{2H}+s_2^{2H})\eta^2}(1\wedge|x\eta|)(1\wedge|\wt{x}\eta|),
\end{align*}
which gives
\begin{align*}
|T_4(\vec{s})|
  &\leq  \varepsilon C \int_{\R^3}\eta^2(s_1s_2)^{H-\frac{1}{2}}
	\bar{f}(x)\bar{f}(\wt{x})
	e^{-\delta(s_1^{2H}+s_2^{2H})\eta^2}
	(1\wedge|x \eta |)(1\wedge|\eta \wt{x}|) d\eta d\vec{x}.
\end{align*}
Consequently, by Lemma \ref{lem:integralfourierfinite}, 
\[
|T_4(\vec{s})|
  \leq  \varepsilon C 
  | \bar{f} \|^2_1   (1\wedge \sqrt{ s_1^{2H} + s_2^{2H}}) ^{-2} (s_1^{2H} + s_2^{2H})^{-\frac 32} 
  (s_1s_2)^{H-\frac 12}.
  \]
  Therefore,
  \begin{align*}
\int_{[0,nT]^2}T_{4}(\vec{s})d\vec{s}
  &\leq  \varepsilon C \| \bar{f} \|^2_1     \int_0^{nT} (1\wedge  s_2^{H})^{-2}  s_2^{-2H} ds_2 \\
  & \le  \varepsilon C \| \bar{f} \|^2_1    n^{-H} \left(
  \int_1^{nT}   s_2^{-4H}  ds_2  +   \int_0^1   ds_2   \right) \\
  & \le   \varepsilon C \| \bar{f} \|^2_1  ( n^{1-3H} + 1).
\end{align*}
From here it easily follows that 
\begin{align}\label{eq:T4limitzero}
\limsup_{n\rightarrow\infty}\ell_{n,H}^2 \left| \int_0^{\varepsilon}\int_{[0,nT]^2}T_{4}(\vec{s})d\vec{s}dr \right|
  & \le C  \varepsilon.
\end{align}
Relation \eqref{eq:Psininfgoal} follows 
from \eqref{eq:T1limitzero}, \eqref{eq:T2limitzero}, \eqref{eq:T3limitzero} and \eqref{eq:T4limitzero} by taking $\varepsilon \to 0$.

\medskip
\noindent
{\it Proof of \eqref{eq:Psininfgoal2}:} We distinguish the cases $H>\frac{1}{3}$ and 
$H=\frac{1}{3}$. If $H>\frac{1}{3}$,  taking into account the definition of $\mathcal{A}[f,g]$ given in \eqref{eq:AHfdefHbig}, it suffices to show that the integral of \eqref{eq:Psisinfmdef} is finite. To this end, 
notice that the absolute value of the integrand is bounded by 
\begin{align*}
C\eta^2(s_1s_2)^{H-\frac{1}{2}}\bar{f}(x)\bar{f}(\wt{x})
	e^{-\delta(s_1^{2H}+s_2^{2H})\eta^2}
	(1\wedge|\eta x|)(1\wedge|\eta \wt{x}|),
\end{align*}
for some constants $\delta ,C>0$. By Lemma \ref{lem:integralfourierfinite}, the integral over $\vec{x}\in\R^2$, $\eta\in\R$, 
$\wt{\eta}\in\R$ and $\vec{s}\in\R_{+}^2$, is bounded by 
\begin{align*}
C \| \bar{f} \|^2_1 \int_{\R_{+}^2}(s_1s_2)^{H-\frac{1}{2}}(  1\wedge \sqrt{ s_1^{2H}+s_2^{2H}})^{-2}(s_1^{2H}+s_2^{2H})^{-\frac 32}d\vec{s},
\end{align*}
which is finite due to the condition $H>\frac{1}{3}$.

To handle the case $H=\frac{1}{3}$, we first prove that 
\begin{align}\label{eq:psiinflog}
\lim_{n\rightarrow\infty} (\log n)^{-1}\int_{[0,nu]^2}\Psi_{\vec{s}}^{(\infty,m)}d\vec{s}
  &=\lim_{n\rightarrow\infty} (\log n)^{-1}\int_{[1,nu]^2}\Psi_{\vec{s}}^{(\infty,m)}d\vec{s}.
\end{align}
To show this, we proceed as in the case $H>\frac{1}{3}$, to deduce the bound
\begin{align*}
|\Psi_{\vec{s}}^{(\infty,m)}|
  &\leq C  \| \bar{f} \|^2_1 (s_1s_2)^{H-\frac{1}{2}}(s_1^{2H}+s_2^{2H})^{-\frac{3}{2}}(1\wedge \sqrt{s_1^{2H}+s_2^{2H}})^{-1}. 
\end{align*}
The right-hand side of the above  inequality is integrable over  $([1, \infty)\times\R_{+})\cup(\R_{+}\times[1, \infty))$ due to the condition $H\geq \frac{1}{3}$, and thus, 
\begin{align*}
&  \limsup_{n\rightarrow\infty}
\,(\log n)^{-1}\int_{([1, \infty)\times\R_{+})\cup(\R_{+}\times[1, \infty))}\Psi_{\vec{s}}^{(\infty,m)}d\vec{s}\\
& \qquad \leq 
C \| \bar{f} \|^2_1\,\, \limsup_{n\rightarrow\infty}\,
(\log n)^{-1}   \\
& \qquad \qquad \times \int_{([1, \infty)\times\R_{+})\cup(\R_{+}\times[1, \infty))}
(s_1s_2)^{H-\frac{1}{2}}(s_1^{2H}+s_2^{2H})^{-\frac{3}{2}}(1\wedge \sqrt{s_1^{2H}+s_2^{2H}})^{-1} d\vec{s}=0.
\end{align*}
Relation \eqref{eq:psiinflog} thus follows from the fact that 
\[
[0,nu]^2  \backslash [1,nu]^2 \subset ([1, \infty)\times\R_{+})\cup(\R_{+}\times[1, \infty)),
\]
for $n\geq \frac{1}{u}$.  It is therefore sufficient to analyze the right-hand side of \eqref{eq:psiinflog}. To do this, we write 
\begin{align}\label{eq:psiinflogd}
\int_{[1,nu]^2}\Psi_{\vec{s}}^{(\infty,m)}d\vec{s}
  &= \frac 1{2 \pi} \left(\int_0^{t_1\wedge t_2\wedge u}\int_{[-m,m]}e^{-\textbf{i}\wt{\eta} (B_{r} -\lambda)}d\wt{\eta}dr\right)\nonumber\\
	& \qquad \times\bigg(\int_{[1,nu]^2}T_{5}(\vec{s})d\vec{s}+\int_{[1,nu]^2}T_{6}(\vec{s})d\vec{s}\bigg),
\end{align}
where  $T_{5} (\vec{s})$ and  $T_{6} (\vec{s})$  are functions satisfying  
\begin{align*}
|T_{5}(\vec{s})|
  &\leq   \frac {\beta_{H,1}^2} {2\pi}\int_{\R^3} 
	\eta^2(s_1s_2)^{H-\frac{1}{2}}\bar{f}(x)\bar{f}(\wt{x})
	e^{-\frac{1}{2}(\beta_{H,2}(s_1^{2H}+s_2^{2H})+\beta_{H,3}(s,s_2))\eta^2}\\
	& \qquad \times 
	\big|(e^{\textbf{i} x \eta }-1\big)\big(1-e^{-\textbf{i}\eta \wt{x}}\big)-\eta^2x\wt{x}\big|
	d\eta d \vec{x}
\end{align*}
and 
\begin{align}\label{eq:T6def}
T_{6}(\vec{s})
  &:=\frac {\beta_{H,1}^2}{ 2\pi} 
	\left (\int_{\R}x f(x)dx\right)\left (\int_{\R}x g(x)dx\right)\int_{\R}\eta^4(s_1s_2)^{H-\frac{1}{2}}
	e^{-\frac{1}{2}(\beta_{H,2}(s_1^{2H}+s_2^{2H})+\beta_{H,3}(s_1,s_2))\eta^2}d\eta.
\end{align}
Taking into consideration \eqref{eq:psiinflog} and \eqref{eq:psiinflogd}, it suffices to show that
\begin{align}\label{eq:limlogT5tozero}
\lim_{n\rightarrow\infty}\frac{1}{\log n}\int_{[1,nu]^2}T_{5}(\vec{s})d\vec{s}=0
\end{align}
and 
\begin{align}\label{eq:limlogT6limAhf}
\lim_{n\rightarrow\infty}\frac{1}{\log n}\int_{[1,nu]^2}T_{6}(\vec{s})d\vec{s}
= \frac 12 \mathcal{A}_{\frac 13}[f,g].
\end{align}
To prove \eqref{eq:limlogT5tozero}, we  proceed as follows. First we use the inequality 
\begin{align*}
|(e^{\textbf{i} x \eta }-1\big)\big(1-e^{-\textbf{i}\eta \wt{x}}\big)-\eta^2x\wt{x}|
  &\leq x^2\wt{x}^2\eta^4
	+ (x^2|\wt{x}|+\wt{x}^2|x|)|\eta|^3
\end{align*}
together with 
Lemma  \ref{lem:integralfourierfinite}  and taking into account that $f,g$ are in $\Xi_2$, to deduce that
\begin{align*}
|T_{5}(\vec{s})|
  &\leq C \| \bar{f} \| ^2_2 \int_{\R} 
	(|\eta|^5+|\eta|^6)(s_1s_2)^{H-\frac{1}{2}}
	e^{-c_H(s_1^{2H}+s_2^{2H})\eta^2}d\eta\\
	&\leq C\| \bar{f} \| ^2_2   (s_1s_2)^{H-\frac{1}{2}}((s_1^{2H}+s_2^{2H})^{-3}+(s_1^{2H}+s_2^{2H})^{-\frac{7}{2}}),
\end{align*}
for some constant $C>0$. We can easily check that the right-hand side is integrable over $\vec{s}\in\R_{+}^2$ due to 
the condition $H=\frac{1}{3}$. Relation \eqref{eq:limlogT5tozero} follows from here.

  Next we prove \eqref{eq:limlogT6limAhf}.  Set
  \begin{align*}
 T_7(\vec{s})&:= \int_{\R}\eta^4(s_1s_2)^{-\frac 16}
	e^{-\frac{1}{2}(\beta_{\frac 13,2}(s_1^{\frac 23}+s_2^{\frac 23})+\beta_{\frac 13,3}(s_1,s_2))\eta^2}d\eta \\
	&=3\sqrt{2\pi}(s_1s_2)^{-\frac 16}(\beta_{\frac 13,2}(s_1^{\frac 23}+s_2^{\frac 23})+\beta_{\frac 13,3}(s_1,s_2))^{-\frac 52}\\
	&= 3\sqrt{2\pi}(  s_2^{-\frac {11} 6} s_1^{-\frac 16}(\beta_{\frac 13,2}(1+(s_1/ s_2)^{\frac 23})+\beta_{\frac 13,3}(1,s_1/s_2))^{-\frac 52},
\end{align*}
  where in the last equality we used the fact that $\beta_{\frac 13,3}(s_1,s_2)= s_2^{\frac 23} \beta_{\frac 13,3} (1, s_1/s_2)$.
  Using the fact that $\sup_{x\in[0,1]}\beta_{\frac 13,3}(1,x)<\infty$, we can check that 
\begin{align*}
\limsup_{n\rightarrow\infty}\int_{1}^n\int_{1}^{s_2} T_7(\vec{s}) d\vec{s}
  &=\infty,
\end{align*}
and thus, by L'H\^opital's rule, 
\begin{align}\label{eq:T6T7relation2}
\lim_{n\rightarrow\infty}\frac{1}{\log n }\int_{1}^{nu} \int_{1}^{s_2}  T_7(\vec{s}) d\vec{s}
  &=\lim_{n\rightarrow\infty}n u\int_{1}^{nu}  T_7(s,nu)  ds.
\end{align}
Applying the  change of variables $\frac s n \to s$, we deduce that  
\[
nu\int_{1}^{nu}  T_7(s,nu)  ds
=3\sqrt{2\pi}(\int_{1/nu}^1 s^{-\frac 16} (\beta_{\frac 13,2}(1+s^{\frac 23})+\beta_{\frac 13,3}(1,s))^{-\frac 52} ds.
\]
Therefore,
\begin{align*}
\lim_{n\rightarrow\infty}\frac{1}{\log n}\int_{[1,nu]^2}T_{6}(\vec{s})d\vec{s}
& =\frac {6 \beta_{\frac 13,1}^2}{ \sqrt{2\pi}} 
	\left (\int_{\R}x f(x)dx\right)\left (\int_{\R}x g(x)dx\right) \\
	& \qquad \times
	 \int_0^1 s^{-\frac 16} (\beta_{\frac 13,2}(1+s^{\frac 23})+\beta_{\frac 13,3}(1,s))^{-\frac 52} ds.
	\end{align*}
	This finishes the proof of \eqref{eq:Psininfgoal2} and completes the proof of \eqref{eq:integrallambdatolt}.\\

\noindent \textbf{Step III}\\
In this step we show the convergence  \eqref{eq:goalStepIII}, that is, for all $m\ge 1$, we have
\[
\lim_{n\rightarrow\infty}  \ell_{n,H}^2  \left \|\int_{0}^u{\Lambda}_{2,r}^{(n,m)}dr \right\|_{L^2(\Omega)}=0.
\]
 From  \eqref{Lambda2} and  using the estimate \eqref{E5} and Minkowski's inequality, we can write
\begin{align}   \notag
 \left \|\int_{0}^u{\Lambda}_{2,r}^{(n,m)}dr \right \|_{L^2(\Omega)}
 & \le  C 
 \int_{\R^4} \int_0^{nT}  \int_0^{nT}  (s_1s_2)^{H-\frac{1}{2}}  \Indi{ \{ |\wt{\eta} |\le m\}}    \Indi{ \{ |n^{-H}\wt{\eta} - \eta|\le  |\eta| \}} 
  \eta^2    (1\wedge | \eta x|) (1\wedge |\eta \wt{x} |) \\   
  & \qquad \times  |f(x) g(\wt{x})| T_{8}(\vec{\eta},\vec{s})
d\vec{s}d\vec{ \eta} d\vec{x}  =: C\Lambda^{(n,m)}_1, \label{EQ27}
\end{align}
where 
\begin{align*}
T_{8}(\vec{\eta},\vec{s})
  &:=\sup_{0\leq t\leq T}\left\|\int_{0}^t  \kappa^{(n)} _{r, \vec{s}}e^{-\frac{1}{2}\alpha_{r,\vec{s},\vec{\eta}}^{(n)}}
	(e^{-\textbf{i}\beta_{r,\vec{s},{\eta}}^{(n)}}- e^{-\frac{1}{2}\E[(\beta_{r,\vec{s},{\eta}}^{(n)})^2]})
	e^{-\textbf{i}\wt{\eta} (B_{r,r+\frac{s_2}{n}} -\lambda)}
	dr\right\|_{L^{2}(\Omega)}.
\end{align*}
Next, we make the following decomposition
\begin{equation} \label{EQ26}
\Lambda^{(n,m)}_1 = \Lambda^{(n,m)}_2 +\Lambda^{(n,m)}_3,
\end{equation}
where
\begin{align*}  
 \Lambda^{(n,m)}_2& :=
 \int_{\R^4} \int_0^{nT}  \int_0^{nT}  (s_1s_2)^{H-\frac{1}{2}}  \Indi{ \{ |\wt{\eta} |\le m\}}     \Indi{ \{  \frac { |\eta| }2 \le |n^{-H}\wt{\eta} - \eta|\le  |\eta| \}} 
  \eta^2    (1\wedge | \eta x|) (1\wedge |\eta \wt{x} |) \\   
  & \qquad \times  |f(x) g(\wt{x})| T_{8}(\vec{\eta},\vec{s})
d\vec{s}d\vec{ \eta} d\vec{x}   
\end{align*}
and
\begin{align*}  
 \Lambda^{(n,m)}_3& :=
 \int_{\R^4} \int_0^{nT}  \int_0^{nT}  (s_1s_2)^{H-\frac{1}{2}}  \Indi{ \{ |\wt{\eta} |\le m\}}     \Indi{ \{  \frac { |\eta| }2 >|n^{-H}\wt{\eta} - \eta| \}} 
  \eta^2    (1\wedge | \eta x|) (1\wedge |\eta \wt{x} |) \\   
  & \qquad \times  |f(x) g(\wt{x})| T_{8}(\vec{\eta},\vec{s})
d\vec{s}d\vec{ \eta} d\vec{x}.
\end{align*}
Using \eqref{E6} and the fact that $\Phi(r_1,r_2, \vec{s},  \vec{\eta}) =\overline{\Phi(r_2,r_1, \vec{s},  \vec{\eta}) }$, we can write
\begin{align}
T^2_{8}(\vec{\eta},\vec{s})  \notag
  &=\int_{[0,T]^2}   
	e^{-\frac{1}{2}\alpha_{r_1,\vec{s},\vec{\eta}}^{(n)}-\frac{1}{2} \alpha_{ r_2\vec{s},\vec{\eta}}^{(n)} } \kappa^{(n)} _{r_1, \vec{s}}\kappa^{(n)} _{r_2, \vec{s}} 
	\E[ \Phi(\vec{r}, \vec{s},  \vec{\eta}) ] d\vec{r} \\  \label{Eq10}
	& \le  2C\int_{[0,T]^2} \Indi{\{ r_1 \le r_2\}} 
	e^{-\frac{1}{2}\alpha_{r_1,\vec{s},\vec{\eta}}^{(n)}-\frac{1}{2}\alpha_{ r_2\vec{s},\vec{\eta}}^{(n)}}
	|\E[ \Phi(\vec{r}, \vec{s},  \vec{\eta}) ] |d\vec{r},
\end{align}	
where
	\[
\Phi(\vec{r}, \vec{s},  \vec{\eta})  =
e^{-\textbf{i}\wt{\eta} (B_{r_1,r_1+\frac{s_2}{n}}   
	-B_{r_2,r_2+\frac {s_2}{n}})}
(e^{-\textbf{i}\beta_{r_1,\vec{s},{\eta}}^{(n)}}- e^{-\frac{1}{2}\E[(\beta_{r_1,\vec{s},{\eta}}^{(n)})^2]})
	(e^{\textbf{i}\beta_{r_2,\vec{s},{\eta}}^{(n)}}- e^{-\frac{1}{2}\E[(\beta_{ r_2,\vec{s},{\eta}}^{(n)})^2]}).
\]
On the set   $\{  \frac { |\eta| }2 >|n^{-H}\wt{\eta} - \eta| \} \cap \{ | \wt{\eta}| \le m\}$ we have
\[
|n^{-H} \wt{\eta}| \ge  |\eta| - |n^{-H} \wt{\eta} -\eta|   \ge  | \eta| - \frac { |\eta|} 2 =\frac {| \eta|}2,
\]
which implies $|\eta| \le  2m  n^{-H}$.
As a consequence, taking into account that $ T_{8}(\vec{\eta},\vec{s})$ is bounded by a constant, we obtain
\[
 \Lambda^{(n,m)}_3 \le C  \| f\|_1 \| g \|_1 \int_0^{nT}  \int_0^{nT}  (s_1s_2)^{H-\frac{1}{2}} \int_0 ^{ 2mn^{-H}} \eta^4 d\eta d\vec{s}
 \le C n ^{1-3H}.
 \]
 Therefore,
 \begin{equation} \label{EQ25}
 \lim_{n\rightarrow \infty}  \ell^2_{n,H}  \Lambda^{(n,m)}_3=0.
 \end{equation}
To handle the term   $\Lambda^{(n,m)}_2$, we first make the decomposition
 \[
 T^2_{8}(\vec{\eta},\vec{s}) = 2C\left (T_{8,1}(\vec{\eta},\vec{s}) +T_{8,2}(\vec{\eta},\vec{s}) \right),
 \]
 where
 \begin{align*}
T_{8,1}(\vec{\eta},\vec{s})   
  &= \int_{[0,T]^2}  \Indi{ \{ 2\ep \le r_1+\varepsilon \le r_2\} } 
	e^{-\frac{1}{2}\alpha_{r_1,\vec{s},\vec{\eta}}^{(n)}-\frac{1}{2}\alpha_{ r_2\vec{s},\vec{\eta}}^{(n)}}
	|\E[ \Phi(\vec{r}, \vec{s},  \vec{\eta}) ] |d\vec{r},
\end{align*}	
and
 \begin{align*}
T_{8,2}(\vec{\eta},\vec{s})   
  &= \int_{[0,T]^2} \left( \Indi{\{|r_1- r_2| < \varepsilon\}} + \Indi{ \{ r_1 \le 2\ep\}} \right) \Indi{\{ r_1  \le r_2\}} 
	e^{-\frac{1}{2}\alpha_{r_1,\vec{s},\vec{\eta}}^{(n)}-\frac{1}{2}\alpha_{ r_2\vec{s},\vec{\eta}}^{(n)}}
	|\E[ \Phi(\vec{r}, \vec{s},  \vec{\eta}) ] |d\vec{r}.
\end{align*}
 
 \noindent
 {\it Estimation of  $T_{8,1}(\vec{\eta},\vec{s})$:}
We easily check that 
\begin{align*}
\E[\Phi(\vec{r}, \vec{s},  \vec{\eta}) ] &
 =\E[e^{-\textbf{i}\wt{\eta} (B_{r_1,r_1+\frac{s_2}{n}} -
	 B_{r_2, r_2+\frac{s_2}{n}} )
	-\textbf{i}\beta_{r_1,\vec{s},\eta}^{(n)}}
	(e^{\textbf{i}\beta_{r_2,\vec{s},\eta}^{(n)}}- e^{-\frac{1}{2}\E[(\beta_{r_2,\vec{s},\eta}^{(n)})^2]})]\\
	& \qquad -\E[e^{-\textbf{i}\wt{\eta} (B_{r_1, r_1+\frac{s_2}{n}} -B_{r_2,r_2+\frac{s_2}{n}} )
	-\frac{1}{2}\E[(\beta_{r_1,\vec{s},\eta}^{(n)})^2]
	}(e^{\textbf{i}\beta_{r_2,\vec{s},\eta}^{(n)}}- e^{\frac{1}{2}\E[(\beta_{r_2,\vec{s},\eta}^{(n)})^2]})].
\end{align*}
We can thus write
\[
|\E[\Phi(\vec{r}, \vec{s},  \vec{\eta}) ] | \le 
 R_1^n +   R_2^n,
\]
where 
\begin{align*}
R_1^n
  &=\Bigg|   \exp \left(-\frac{1}{2}
	\text{Var} \left[ \wt{\eta} 
	B_{r_1,r_1+\frac{s_2}{n}} -\wt{\eta}  
	B_{r_2, r_2+\frac{s_2}{n}} 
	+\beta_{r_1\vec{s},\eta}^{(n)}
	-\beta_{r_2,\vec{s},\eta}^{(n)} \right] \right) \\
	& \qquad 
	-\exp\left(-\frac{1}{2}
	\text{Var}\left[ \wt{\eta} B_{r_1,r_1+\frac{s_2}{n}} -\wt{\eta}  B_{r_2,r_2+\frac{s_2}{n}} 
	+\beta_{r_1,\vec{s},\eta}^{(n)}
	+\wt{\beta}_{r_2,\vec{s},\eta}^{(n)} \right] \right) \Bigg|
	\end{align*}
	and
	\begin{align*}		
R_2^n
  &=\exp \left(-\frac{1}{2}\E[(\beta_{r_1,\vec{s},\eta}^{(n)})^2] \right)
	  \Bigg| \exp \left(-\frac{1}{2}
	\text{Var}  \left[ \wt{\eta} B_{r_1,r_1+\frac{s_2}{n}} -\wt{\eta}  
	B_{r_2, r_2+\frac{ s_2}{n}} 
	-\beta_{r_2\vec{s},\eta}^{(n)} \right] \right) \\
	& \qquad
	- \exp \left(-\frac{1}{2}
	\text{Var} \left[ \wt{\eta} B_{r_1,r_1+\frac{s_2}{n}} 
	-\wt{\eta}  B_{r_2, r_2+\frac{s_2}{n}} 
	+\wt{\beta}_{r_2,\vec{s},\eta}^{(n)}\right] \right)   \Bigg|,
\end{align*}
where $\wt{\beta}_{r_2,\vec{s},\eta}^{(n)}$ is an independent copy of 
$\beta_{r_2,\vec{s},\eta}^{(n)}$. Thus, by the mean value theorem, 
\begin{align*}
R_1^n
  &\leq  |
	\text{Var}[ \wt{\eta} B_{r_1,r_1+\frac{s_2}{n}} -\wt{\eta}  
	B_{r_2,r_2+\frac{s_2}{n}} 
	+\beta_{r_1,\vec{s},\eta}^{(n)}
	-\beta_{r_2,\vec{s},\eta}^{(n)}]\\
	&-\text{Var}[ \wt{\eta} B_{r_1,r_1+\frac{s_2}{n}} 
	-\wt{\eta}  B_{r_2,r_2+\frac{s_2}{n}} 
	+\beta_{r_1,\vec{s},\eta}^{(n)}
	+\wt{\beta}_{r_2,\vec{s},\eta}^{(n)}]|\\
	&=  2|\E[ (\wt{\eta} B_{r_1,r_1+\frac{s_2}{n}}  
	-\wt{\eta}  B_{r_2,r_2+\frac{s_2}{n}}  
	+\beta_{r_1,\vec{s},\eta}^{(n)})\beta_{r_2,\vec{s},\eta}^{(n)}]|\\
  &=  2n^H|\eta||\E[(\wt{\eta} B_{r_1,r_1+\frac{s_2}{n}}  
	-\wt{\eta}  B_{r_2,r_2+\frac{s_2}{n}}  
	+n^H\eta(B_{r_1,r_1+\frac{s_2}{n}} - B_{r_1,r_1+\frac{s_2}{n}}))
	(B_{r_2,r_2+\frac{s}{n}} - B_{r_2,r_2+\frac{s_2}{n}})]|,
\end{align*}
where the last identity follows from \eqref{eq:beta1def}. Applying Lemma \ref{lem:Binclimitasd} and the fact that $|\wt{\eta}|\leq m$
and $2\varepsilon \le r_1+\varepsilon\leq r_2$, we thus obtain
\begin{align*}
R_1^n
&\leq  C_{\varepsilon}(n^{2H-2}\eta^2( s_1-s_2)^2
+n^{H-1}|\eta|| s_1-s_2| +  n^{H-\frac{3}{2}}|\eta|^{2}|s_1-s_2|^{H+\frac{3}{2}})).
\end{align*}
Similarly, we can show that 
\[
R_2^n
\leq  C_{\varepsilon} n^{H-1}|\eta||s_1-s_2|.
\]
 From here we conclude that 
\begin{align} 
T_{8,1}(\vec{\eta},\vec{s})
  &\leq C_{\varepsilon}\int_{[0,T]^2}   \nonumber
	 \exp \left(-\frac{1}{2} (\alpha_{r_1,\vec{s},\vec{\eta}}^{(n)} +\alpha_{r_2,\vec{s},\vec{\eta}}^{(n)}) \right) \\
	 & \qquad  \times
	(n^{2H-2}\eta^2(s_1-s_2)^2
+n^{H-1}|\eta||s_1-s_2| +  n^{H-\frac{3}{2}}\eta^{2}|s_1-s_2|^{H+\frac{3}{2}})d\vec{r}. \label{eq:T8bounddk}
\end{align}
On the set  $\{  \frac { |\eta| }2 \le |n^{-H}\wt{\eta} - \eta|\le  |\eta| \}$, in view of the estimate \eqref{E6}, we have
\begin{equation}
 \exp \left(-\frac{1}{2} (\alpha_{r_1,\vec{s},\vec{\eta}}^{(n)} +\alpha_{r_2,\vec{s},\vec{\eta}}^{(n)}) \right) 
 \le  \exp \left(-\delta \eta^2 (s_1^{2H} + s_2^{2H})\right). \label{EQ20}
 \end{equation}
Thus, from \eqref{eq:T8bounddk} and \eqref{EQ20}, we get
\begin{align}  \notag
T_{8,1}(\vec{\eta},\vec{s})  & \le C_\ep  \exp \left(-\delta \eta^2 (s_1^{2H} + s_2^{2H})\right)  \\
& \qquad  \times  (n^{2H-2}\eta^2(s_1-s_2)^2
+n^{H-1}|\eta||s_1-s_2| +  n^{H-\frac{3}{2}}\eta^{2}|s_1-s_2|^{H+\frac{3}{2}}).  \label{EQ21}
\end{align}

 \noindent
 {\it Estimation of  $T_{8,2}(\vec{\eta},\vec{s})$:}
In view of \eqref{EQ20}, we have
\begin{equation} \label{EQ22}
T_{8,2}(\vec{\eta},\vec{s}) \le C \ep \exp \left(-\delta \eta^2 (s_1^{2H} + s_2^{2H})\right).
\end{equation} 
Thus, by \eqref{EQ21} and \eqref{EQ22},  we can write
\begin{align}
T_{8}(\vec{\eta},\vec{x}))
  &\leq 
 e^{-\delta  (s_1^{2H}+s_2^{2H})\eta^{2}}
	\big(C\varepsilon^{\frac 12}+C_{\varepsilon}(n^{H-1}|\eta||s_2-s|
+n^{\frac{H-1}{2}}|\eta|^{\frac{1}{2}}|s-s_2|^{\frac{1}{2}}\nonumber\\
&+n^{\frac{H}{2}-\frac{3}{4}}|\eta||s_2-s|^{\frac{H}{2}+\frac{3}{4}})\big).\label{eq:T8finalboundextra}
\end{align}
Therefore
\begin{align*}
\Lambda^{(n,m)}_s & \le   
\int_{\R^3} \int_{[0,nT]^2}  (s_1s_2)^{ H-\frac 12} 
	\eta^2 e^{-\delta( s_1^{2H}+s_2^{2H})\eta^2}(1\wedge|\eta x|)(1\wedge|\eta \wt{x}|)|\bar{f}(x)g(\wt{x})|\\
	& \qquad \times\big (C\varepsilon^{\frac 12}+C_{\varepsilon}(n^{H-1}|\eta||s_2-s|
+n^{\frac{H-1}{2}}|\eta|^{\frac{1}{2}}|s-s_2|^{\frac{1}{2}}\nonumber\\
& \qquad +n^{\frac{H}{2}-\frac{3}{4}}|\eta||s_2-s|^{\frac{H}{2}+\frac{3}{4}})\bigg)
d\vec{s}d\wt{\eta}d\eta d\vec{x}.
\end{align*}
By Lemma \ref{lem:integralfourierfinite}, the previous quantity is bounded by 
\begin{equation} \label{EQ23}
 \| f \|_1 \|g \|_1 \int_{[0,nT]^2}
 \left( A_1^{(n)}( \vec{s})+A_2^{(n)}( \vec{s})+A_3^{(n)}( \vec{s})+A_4^{(n)}( \vec{s}) \right) d\vec{s},
 \end{equation}
 where
\[
 A_1^{(n)}= C \ep^{\frac 12}    (s_1s_2)^{ H-\frac 12}   \left( s_1^{2H}+s_2^{2H}\right)^{-\frac 32}  \left( 1\vee \sqrt{ s_1^{2H}+s_2^{2H}}\right)^{-2},
 \] 
 \[
 A_2^{(n)}= C_\ep  n^{H-1} |s_1-s_2| (s_1s_2)^{ H-\frac 12}   \left( s_1^{2H}+s_2^{2H}\right)^{-2}  \left( 1\vee \sqrt{ s_1^{2H}+s_2^{2H}}\right)^{-2},
 \] 
   \[
 A_3^{(n)}= C_\ep  n^{\frac {H-1}2} |s_1-s_2| ^{\frac 12}(s_1s_2)^{ H-\frac 12}   \left( s_1^{2H}+s_2^{2H}\right)^{-\frac 72}  \left( 1\vee \sqrt{ s_1^{2H}+s_2^{2H}}\right)^{-2},
 \] 
  and
     \[
 A_4^{(n)}= C_\ep  n^{\frac H2 -\frac 34} |s_1-s_2| ^{\frac H2+\frac 34}(s_1s_2)^{ H-\frac 12}   \left( s_1^{2H}+s_2^{2H}\right)^{-2}  \left( 1\vee \sqrt{ s_1^{2H}+s_2^{2H}}\right)^{-2},
 \] 
  The  quantity  \eqref{EQ23} can be bounded by distinguishing the cases $s_1\leq s_2$ and $s_1\geq s_2$, leading to
\begin{equation}
\limsup_{n\rightarrow\infty}\ell_{n,H}^2 \Lambda_2 ^{(n,m)} 
  \leq  C \varepsilon^{\frac 12}.  \label{EQ24}
\end{equation}
Relation \eqref{eq:goalStepIII}  follows from  \eqref{EQ27}, \eqref{EQ26}, \eqref{EQ25} and \eqref{EQ24}.

\medskip
\noindent \textbf{Step IV}\\
Next we prove that 
\begin{align}\label{eq:StepIVgoal2}
\lim_{m\rightarrow\infty}\sup_{n}\ell_{n,H} ^2 \left \|\int_0^u\Lambda_{3,r}^{(n,m)}dr \right \|_{L^{2}(\Omega)}\rightarrow0.
\end{align}
From  \eqref{Lambda3} and the estimates \eqref{E6} and \eqref{E7}, we can write, using Minkowski's inequality,
\begin{align} \notag
\left \|\int_0^u\Lambda_{3,r}^{(n,m)}dr \right \|_{L^{2}(\Omega)}
  &\leq C \int_{[0,nT]^2}\int_{\R^3} (s_1 s_2 )^{H-\frac{1}{2}}
 |f(x)g (\wt{x})|    (1\wedge |\eta x|) (1\wedge |\eta \wt{x}|) \\
& \qquad  \times 
	\|\lambda_{3,r}^{(n,m)}({\eta},\vec{s})
	\|_{L^{2}(\Omega)} d\eta d\vec{x}d\vec{s}, \label{eq:lambdathreeboundinitial}
\end{align}
where
\begin{align*}
\lambda_{3,r}^{(n,m)}({\eta},\vec{s})
	&:=\int_0^u\int_{\R\backslash[-m,m]} 
	\Indi{ \{ |n^{-H} \wt{\eta} -\eta| \le |\eta|\}} (n^{-H} \wt{\eta} -\eta) \eta  \kappa^{(n)}_{r, \vec{s}}
	e^{-\frac{1}{2}\alpha_{r,\vec{s},{\eta},\wt{\eta}}^{(n)}}
	e^{-\textbf{i}(\beta_{r,\vec{s}, \eta}^{(n)}+\wt{\eta} (B_{r,r+\frac{s_2}{n}} -\lambda))}
	d\wt{\eta}dr.
\end{align*}
To handle $\|\lambda_{3,r}^{(n,m)}({\eta},\vec{s})
	\|_{L^{2}(\Omega)}$, we notice that 
\begin{align}  \notag
\E[ | \lambda_{3,r}^{(n,m)}(\eta,\vec{s})|^2]
	&\le 2\int_{0}^u\int_{0}^{r_2}\int_{(\R\backslash[-m,m])^2} 
	e^{-\frac{1}{2}\alpha_{r_1,\vec{s},\eta,\wt{\eta}}^{(n)}
	-\frac{1}{2}\alpha_{r_2,\vec{s},\eta,\wt{\eta}}^{(n)}} \\
	&\times  | \E[e^{-\textbf{i}(\beta_{r_1,\vec{s}, \eta}^{(n)}-\beta_{r_2,\vec{s}, \eta}^{(n)}
	+\eta B_{r_1,r_1+\frac{s_2}{n}}
	-\wt{\eta} B_{r_2,r_2+\frac{s_2}{n}} )}]  |d\eta d\wt{\eta}dr_1dr_2.   \label{EQ52}
\end{align}
Let us introduce the following notation
\[
\Phi_1(\vec{r}, \vec{s},  \vec{\eta})   := |\E[e^{-\textbf{i} ( \eta  B_{r_1,r_1+\frac{s_2}{n}}- \wt{\eta} B_{r_2,r_2+\frac{s_2}{n}}
	+\beta_{r_1,\vec{s},{\eta}}^{(n)}-\beta_{r_2,\vec{s},{\eta}}^{(n)}) }] | ^{\frac 13}
	e^{-\frac{1}{2}(\alpha_{r_1,\vec{s}, \eta, \wt{\eta}}^{(n)}
	+\alpha_{r_2,\vec{s},\eta, \wt{\eta}}^{(n)} ) }.
\]
Recall that we are assuming $r_1\le  r_2$.   We distinguish  the following three cases:
\begin{itemize}
\item[(i)]  $r_1+\frac{s_1\vee s_2}{n}<r_2+\frac{s_1\wedge s_2}{n}$, 
\item[(ii)]   $r_1+\frac{s_1 \vee s_2}{n}\geq r_2 +\frac{s_1\wedge s_2}{n}$ and  $\frac { |s_1-s_2|} n \le 2 (r_2-r_1)$,
\item[(iii)]  $r_1+\frac{s_1 \vee s_2}{n}\geq r_2 +\frac{s_1\wedge s_2}{n}$ and  $\frac { |s_1-s_2|} n > 2 (r_2-r_1)$.
\end{itemize}

We begin with the case (i). By equation \eqref{eq:last1} in  Lemma \ref{lem:last} and Lemma \ref{lem:Mulimit},
\begin{align}  \notag
\Phi_1(\vec{r}, \vec{s},  \vec{\eta}) & \leq e^{-\delta  s_1^{2H}\eta^{2}}  \exp \left(- \delta   {\rm Var}[ \wt{\eta}B_{r_1+\frac{s_{2}}{n}}
-\wt{\eta}B_{r_2+\frac{s_{2}}{n}}\ |\ B_{r_1+\frac{s_{2}}{n}}-B_{r_1+\frac{s_{1}}{n}},B_{r_2+\frac{s_{2}}{n}}-B_{r_2+\frac{s_{1}}{n}}] \right)\\  \notag
  & \qquad   \times \exp \left( - \delta \eta^2 n^{2H}{\rm Var}[B_{r_{2}+\frac{s_{2}}{n}}-B_{r_2+\frac{s_{1}}{n}}\ 
|\ B_{r_{1}+\frac{s_{2}}{n}},B_{r_1+\frac{s_{1}}{n}},B_{r_2+\frac{s_{2}}{n}}] \right)\\   \label{Eq5}
& \qquad   \times  \exp \left( - \delta \eta^2 n^{2H}{\rm Var}[B_{r_{1}+\frac{s_{2}}{n}}-B_{r_1+\frac{s_{1}}{n}}\ 
|\ B_{r_{2}+\frac{s_{2}}{n}},B_{r_2+\frac{s_{1}}{n}},B_{r_1+\frac{s_{2}}{n}}] \right).
\end{align}
 By the local non-determinism property of $B$ (see \eqref{LND}), when $r_1+\frac{s_1 \vee s_2}{n}<r_2+\frac{s_1\wedge s_2}{n}$ we have the inequalities
\begin{equation}  \label{Eq1}
n^{2H}\text{Var}[ B_{r_2+\frac{s_2}{n}}-B_{r_2+\frac{s_1}{n}} \ 
|\  B_{r_{1}+\frac{s_{2}}{n}},B_{r_1+\frac{s_{1}}{n}},B_{r_2+\frac{s_{2}}{n}}]
  \geq \delta |s_2-s_1|^{2H}.
\end{equation}
and
\begin{equation}  \label{Eq2}
n^{2H}{\rm Var}[B_{r_{1}+\frac{s_{2}}{n}}-B_{r_1+\frac{s_{1}}{n}}\ 
|\ B_{r_{2}+\frac{s_{2}}{n}},B_{r_2+\frac{s_{1}}{n}},B_{r_1+\frac{s_{2}}{n}}]
  \geq \delta |s_2-s_1||^{2H}.
\end{equation}

 Now we handle the case (ii), namely, when  $r_1+\frac{s_1\vee s_2}{n}\geq r_2 +\frac{s_1\wedge s_2}{n}$ and
 $\frac { |s_1-s_2|} n \le 2 (r_2-r_1)$. By  
equation \eqref{eq:last2} in  Lemma \ref{lem:last} and Lemma \ref{lem:Mulimit},
\begin{align*}
  \Phi_1(\vec{r}, \vec{s},  \vec{\eta})   &\leq 
  e^{-\delta s_1^{2H} \eta^2} \\   
	&\times \exp \left(-\delta 
	{\rm Var} \left[ \wt{\eta}B_{r_1+\frac{s_{2}}{n}}
-\wt{\eta}B_{r_2+\frac{s_{2}}{n}}\ |\ B_{r_{1}+\frac{s_{2}}{n}}-B_{r_1+\frac{s_{1}}{n}},B_{r_{2}+\frac{s_{2}}{n}}-B_{r_2+\frac{s_{1}}{n}}\right]\right)\\
&\times \exp \left(-\delta \eta^2 n^{2H}{\rm Var}\left[B_{r_{2}+\frac{s_{1}}{n}}-B_{r_1+\frac{s_{1}}{n}}\ 
|\  B_{r_1+\frac{s_{2}}{n}},B_{r_2+\frac{s_{2}}{n}} \right] \right).
\end{align*}
 Applying the local non-determinism of $B$  (see \eqref{LND}) and taking into account that $r_1+\frac{s_1\vee s_2}{n}\geq r_2 +\frac{s_1\wedge s_2}{n}$ and
 $\frac { |s_1-s_2|} n \ge 2 (r_2-r_1)$,  we obtain
 \begin{equation}  \label{EQ30}
 {\rm Var}\left[B_{r_2+\frac{s_{1}}{n}}
-B_{r_1+\frac{s_{1}}{n}}\ |\   B_{r_1+\frac{s_{2}}{n}},B_{r_2+\frac{s_{2}}{n}} \right]
 \ge  \delta (r_2-r_1) ^{2H} \ge  \frac \delta 2 |s_1-s_2| ^{2H} n^{-2H}.
 \end{equation} 
 
 Finally, we handle the case (iii), namely, when $r_1+\frac{s_1 \vee s_2}{n} \ge  r_2 +\frac{s_1\wedge s_2}{n}$ and $\frac{|s_1-s_2|}{n}>2(r_2-r_1)$. Notice that, by the local non-determinism property of the process $B$ (see \eqref{LND}),
 \begin{align*}
 &{\rm Var}\left[B_{r_2+\frac{s_{1}}{n}}
-B_{r_1+\frac{s_{1}}{n}}\ |\   B_{r_1+\frac{s_{2}}{n}}, B_{r_2+\frac{s_{2}}{n}} \right] \\
& \qquad \ge
{\rm Var}\left[B_{r_2+\frac{s_{1}}{n}}\ |\  B_{r_1+\frac{s_{1}}{n}},  B_{r_1+\frac{s_{2}}{n}}, B_{r_2+\frac{s_{2}}{n}} \right] 
\\
& \qquad \geq \delta  |r_2-r_1-\frac{|s_2-s_1|}{n}|^{2H},
\end{align*} 
which leads to the estimate
\begin{equation}
 {\rm Var}\left[B_{r_2+\frac{s_{1}}{n}}
-B_{r_1+\frac{s_{1}}{n}}\ |\   B_{r_1+\frac{s_{2}}{n}}, B_{r_2+\frac{s_{2}}{n}} \right] 
  \ge \frac \delta 2 |s_1-s_2 | ^{2H} n^{-2H}. \label{eq:helpfinallem}
\end{equation}
From \eqref{Eq5},  \eqref{Eq1}, \eqref{Eq2},  \eqref{EQ30} and \eqref{eq:helpfinallem}, we obtain
\begin{align*}
  \Phi_1(\vec{r}, \vec{s},  \vec{\eta})   &\leq 
  e^{-\delta (s_1^{2H} + |s_1 -s_2|^{2H}) \eta^2} \\   
	&\times \exp \left(-\delta 
	{\rm Var} \left[ \wt{\eta}B_{r_1+\frac{s_{2}}{n}}
-\wt{\eta}B_{r_2+\frac{s_{2}}{n}}\ |\ B_{r_{1}+\frac{s_{2}}{n}}-B_{r_1+\frac{s_{1}}{n}},B_{r_{2}+\frac{s_{2}}{n}}-B_{r_2+\frac{s_{1}}{n}}\right]\right).
\end{align*}

  Next we bound from below the variance appearing in the right-hand side of the above expression. 
Let $\Sigma$ denote the covariance matrix of 
$(B_{r_1+\frac{s_2}{n}}, B_{r_2+\frac{s_2}{n}})$ given the 
$\sigma$-algebra generated by $B_{r_{1}+\frac{s_1}{n}}-B_{r_1+\frac{s_2}{n}}$ and $B_{r_{2}+\frac{s_1}{n}}-B_{r_2+\frac{s_2}{n}}$.
Define also the matrix
\[
\wt{\Sigma}:=\Sigma^{-1}=(\det \Sigma)^{-1}\left(\begin{array}{cc}\Sigma_{2,2}&-\Sigma_{1,2}\\-\Sigma_{1,2}&\Sigma_{1,1}\end{array}\right).
\]
Notice that 
\begin{align*}
e^{-\delta\text{Var}[\wt{\eta}B_{r_1+\frac{s_2}{n}}
-\wt{\eta}B_{r_2+\frac{s_2}{n}}\ |\ 
B_{r_{1}+\frac{s_1}{n}}-B_{r_1+\frac{s_2}{n}},B_{r_{2}+\frac{s_1}{n}}-B_{r_2+\frac{s_2}{n}}]}
  &=(\det \Sigma)^{-\frac{1}{2}}p_{ \wt{\Sigma}}( \sqrt{2\delta} \wt{\eta},-\sqrt{2\delta} \wt{\eta}),
\end{align*}
where $p_{\wt{\Sigma}}$ denotes the probability density of a centered Gaussian random vector $(N_1,N_2)$ in $\R^2$, with covariance 
$\wt{\Sigma}$. From here it follows that
\begin{align*}
&\int_{(\R\backslash[-m,m])^2}  e^{-\delta\text{Var}[\wt{\eta}B_{r_1+\frac{s_2}{n}}
-\wt{\eta}B_{r_2+\frac{s_2}{n}}   \ |\ 
B_{r_{1}+\frac{s_1}{n}}-B_{r_1+\frac{s_2}{n}},B_{r_{2}+\frac{s_1}{n}}-B_{r_2+\frac{s_2}{n}}]} 
d\wt{\eta}d\wt{\eta}\\
  &\leq  \frac 1{2\delta} (\det \Sigma)^{-\frac{1}{2}}\Pb[N_1\geq \sqrt{  2\delta} m, N_2\geq \sqrt{2\delta}m]\\
	&\leq \frac 1{2\delta}  (\det \Sigma)^{-\frac{1}{2}} e^{-\frac{ \delta m^2|\Sigma|}{\Sigma_{2,2}}},
\end{align*}
where the last inequality follows from Chebyshev inequality. 
Notice that
\[
\Sigma_{2,2} \le (2T)^{2H}.
\]
Therefore, for any $\gamma>0$ such that $(\frac 12+\gamma)H<1$, we can write
\begin{equation} \label{CL2}
\E[|\lambda_{3,r}^{(n,m)}(\eta,\vec{s})|^2] \le    C    m^{-2\gamma}  e^{-\delta (s_1^{2H} + |s_1 -s_2|^{2H}) \eta^2} \ \int_0^u \int_0^{r_2} 
   (\det \Sigma)^{-\frac{1}{2}-\gamma}   dr_1 dr_2.
\end{equation}
We claim that  
\begin{equation} \label{CL1}
\sup_{n\ge 1} \sup_{s_1,s_2 \in [0,nT]} \int_0^u \int_0^{r_2} 
   (\det \Sigma)^{-\frac{1}{2}-\gamma}   dr_1 dr_2 <\infty.
   \end{equation}
This follows easily from Lemma \ref{Lem:Varconditionedonincrements},  taking $a=r_1 + \frac {s_2}n$, $b=r_1 +\frac {s_2}n$ and $h= \frac {s_1-s_2}n$. Indeed we get  the upper bound
\begin{align*}
\det \Sigma &  \le \delta  \left(r_1 + \frac {s_2}n \right)^{2H} \left(r_2-r_1-\frac {s_1-s_2}n \right)^{2H}\Indi{\{0<\frac {s_1-s_2}n  < r_2-r_1\}} \\
& \quad +\delta  \left(r_1 + \frac {s_2}n \right)^{2H} \left( (r_2-r_1)\wedge \left( \frac {s_1-s_2}n  - (r_2-r_1) \right)\right)^{2H}\Indi{\{\frac {s_1-s_2}n  > r_2-r_1\}}  \\
& \quad +\delta  \left(r_1 + \frac {s_1}n \right)^{2H} \left(  \frac {s_2-s_1}n  \wedge \left( r_2-r_1-\frac {s_2-s_1}n\  \right)\right)^{2H}\Indi{\{0<\frac {s_2-s_1}n  < r_2-r_1\}}    \\
& \quad + \delta  \left(r_1 + \frac {s_1}n \right)^{2H} \left(   (r_2-r_1)  \wedge \left(   \frac {s_2-s_1}n -(r_2-r_1)   \right)\right)^{2H}\Indi{\{\frac {s_2-s_1}n  > r_2-r_1\}}.
\end{align*}
Making the change of variables $r_1=x$ and $r_2-r_1=y$  the claim \eqref{CL1}  follows.

 Therefore, by \eqref{eq:lambdathreeboundinitial}, \eqref{CL2} and \eqref{CL1}, we obtain
\begin{align*}
\left\|\int_0^t\Lambda_{3,r}^{(n,m)}dr \right\|_{L^{2}(\Omega)}
  &\leq Cm^{-2\gamma}\int_{[0,nt]^2}\int_{\R^3} (s_1s_2)^{H-\frac{1}{2}}
	e^{-\delta(s_1^{2H}+|s_2-s_1|^{2H})|\eta|^2}|\bar{f}(x)\bar{f}(\wt{x})| \\
	& \qquad \times (1\wedge |\eta x|) (1\wedge |\eta \wt{x}|)   d\eta d\vec{x}d\vec{s}.
\end{align*}
Applying Lemma \ref{lem:integralfourierfinite} as in the previous steps, we can deduce that
\begin{align*}
\left \|\int_0^t\Lambda_{3,r}^{(n,m)}dr \right\|_{L^{2}(\Omega)}
  &\leq Cm^{-2\gamma}  \| \bar{f} \|_1^2 \int_{[0,nt]^2} (s_1s_2)^{H-\frac{1}{2}}(s_1^{2H}+|s_2-s_1| ^{2H})^{-\frac{3}{2}} \\
  & \qquad \times 
	(1 \vee \sqrt{s_1^{2H}+|s_2-s_1|^{2H}})^{-1}d\vec{s}.
\end{align*}
Relation \eqref{eq:StepIVgoal2} is a consequence of Lemma \ref{lem1}.

%========================================================================================
\subsection{Proof of \textbf{(A2)}}
We can easily prove that 
 \begin{align}  \nonumber
\E\left[n^{1-H}\left (\int_0^ u F_{r,t}^{(f,n)}dr\right) ^2\right]
  &=-\frac{n^{-3H}}{4\pi^2}\int_{[0,u]^2}\int_{0}^{n(t-r_1)}\int_{0}^{n(t-r_2)}\int_{\R^4}(s_1s_2)^{H-\frac{1}{2}} \\
  & \qquad \times f(x)f(\wt{x})\nonumber
	 \xi \wt{\xi} e^{-\frac{1}{2}\hat{\alpha}_{\vec{r},\vec{s},\vec{\xi}}^{(n)}}
	\E\big[e^{-\textbf{i}\xi(B_{r_{1},r_{1}+\frac{s_{1}}{n}}-\lambda)-\textbf{i}\tilde{\xi}(B_{r_{2},r_{2}+\frac{r_{2}}{n}}-\lambda)}\big] \\  \label{e1}
	& \qquad \times 
	\big(e^{\textbf{i}\frac{x _1\xi }{n^H}}-1\big)(e^{\textbf{i}\frac{\wt{x}  \wt{\xi}}{n^H}}-1\big)\hat{\kappa}_{\vec{r},\vec{s}}^{(n)}d\vec{\xi} d\vec{x}d\vec{s}d\vec{r},
\end{align}
where 
\[
\hat{\kappa}_{\vec{r},\vec{s}}^{(n)}
  :=n^{2H-1}(s_1s_2)^{\frac{1}{2}-H}K_H(r_1+\frac{s_1}{n},r_1)K_H(r_2+\frac{s_2}{n},r_2)
  \]
  and
  \begin{equation} \label{e2}
\hat{\alpha}_{\vec{r},\vec{s},\vec{\xi}}^{(n)}
  :=\mu_{r_1,r_1+\frac{s_1}{n}} \xi^2 + \mu_{r_2,r_2+\frac{s_2}{n}}\wt{\xi}^2.
\end{equation}
  To estimate the expectation in the right-hand side of \eqref{e1}, we define the random variables
\begin{align*}
\begin{array}{ll}
N_1
  :=B_{r_1,r_1+\frac{s_1}{n}},\ \ \ \ \ 
  &
N_2
  :=B_{r_2,r_2+\frac{s_2}{n}},\\
N_3
  :=  B_{r_1+\frac{s_1}{n}}-B_{r_1,r_1+\frac{s_1}{n}} ,
\ \ \ \ \ 
  &
N_4
:=  B_{r_2+\frac{s_2}{n}}-B_{r_2,r_2+\frac{s_2}{n}}.
\end{array}
\end{align*}
Using \eqref{eq:Brsmursdef}  and \eqref{e2}, we can write
\begin{align*}
|\E\big[e^{-\textbf{i}\xi(B_{r_{1},r_{1}+\frac{s_{1}}{n}}-\lambda)-\textbf{i}\tilde{\xi}(B_{r_{2},r_{2}+\frac{r_{2}}{n}}-\lambda)}\big]|
	&=e^{-\frac{1}{2}\text{Var}[\xi N_1 +\wt{\xi} N_2 ]},
\end{align*}
and 
\begin{align*}
e^{-\frac{1}{2}\hat{\alpha}_{\vec{r},\vec{s},\vec{\xi}}^{(n)}}
  &=e^{-\frac{1}{2}(\text{Var}[\xi N_3]+\text{Var}[ \wt{\xi} N_4])}.
\end{align*}
Notice that by Jensen's inequality, 
\begin{align*}
\text{Var}[\xi B_{r_1+\frac{s_1}{n}}
+\wt{\xi} B_{r_2+\frac{s_2}{n}}
]
  &=\text{Var}[ \xi N_3+ \wt{\xi} N_4 
  + \xi N_1 +\wt{\xi}N_2 ]\\
  &\leq 3\big(\text{Var}[\xi N_3]+\text{Var}[\wt{\xi} N_4]  +\text{Var}[\xi N_1 +\wt{\xi}N_2  ]\big)\\
	&=3(\hat{\alpha}_{\vec{r},\vec{s},\vec{\xi}}^{(n)}+\text{Var}[\xi  N_1 +\wt{\xi}N_2]),
\end{align*}
and consequently,  
\begin{align*}
& \wt{\xi}^2\text{Var}[B_{r_2+\frac{s_2}{n}}\ |\ B_{r_1+\frac{s_1}{n}}]
+\xi^2\text{Var}[ B_{r_1+\frac{s_1}{n}} |\ B_{r_2+\frac{s_2}{n}}]
\\
&\leq6( \hat{\alpha}_{\vec{r},\vec{s},\vec{\xi}}^{(n)} +\text{Var}[\xi  N_1 +\wt{\xi}N_2]).
\end{align*}
The above inequality combined with  Lemma  \ref{lem:Mulimit} implies the existence of a constant $\delta>0$ such that 
\begin{align*}
&\E\left[n^{1-H}\left(\int_0^uF_{r,t}^{(f,n)}dr\right)^2\right]
  \leq Cn^{-3H}\int_{[0,t]^2}\int_{[0,nt]^2}\int_{\R^4}
	|\xi||\tilde{\xi}|(s_1s_2)^{H-\frac{1}{2}}|f(x)f(\wt{x})|\\
	&\times(1\wedge|n^{-H}\xi x|)(1\wedge|n^{-H}\tilde{\xi} \wt{x}|)e^{-\delta n^{-2H}(s_1^{2H}|\xi|^2+s_2^{2H}|\tilde{\xi}|^2)}\\
	& \qquad\times
	 e^{-\frac{1}{24}(
	\xi^2\text{Var}[ B_{r_1+\frac{s_1}{n}}\ |\ B_{r_2+\frac{s_2}{n}}])
	+\wt{\xi}^2\text{Var}[B_{r_2+\frac{s_2}{n}}\ |\ B_{r_1+\frac{s_1}{n}}]}
	d\vec{\xi} d\vec{x}d\vec{s}d\vec{r}.
\end{align*}
By the local non-determinism property of $B$, we thus conclude that 
\begin{align*}
&\E\left[n^{1-H}\left(\int_0^uF_{r,t}^{(f,n)}dr\right)^2\right]
  \leq Cn^{-3H}\int_{[0,t]^2}\int_{[0,nt]^2}\int_{\R^4}
	|\xi||\tilde{\xi}|(s_1s_2)^{H-\frac{1}{2}}|f(x)f(\wt{x})|\\
	&\times(1\wedge|n^{-H}\xi x|)(1\wedge|n^{-H}\tilde{\xi} \wt{x}|)e^{-\delta n^{-2H}(s_1^{2H}|\xi|^2+s_2^{2H}|\tilde{\xi}|^2)}\\
	& \qquad\times
	 e^{-\delta
	|\xi|^2(r_1+\frac{s_1}{n})^{2H}\wedge|r_2-r_1+\frac{s_2-s_1}{n}|^{2H}
	-\delta|\wt{\xi}|^2(r_2+\frac{s_2}{n})^{2H}\wedge|r_2-r_1+\frac{s_2-s_1}{n}|^{2H}}
	d\vec{\xi} d\vec{x}d\vec{s}d\vec{r}.
\end{align*}
An application of Lemma \ref{lem:integralfourierfinite} to the right hand side of the previous inequality leads to 
\begin{multline*}
\E\left[n^{1-H}\left(\int_0^uF_{r,t}^{(f,n)}dr\right)^2\right]\\
\begin{aligned}
  &\leq Cn^{-3H}\int_{[0,t]^2}\int_{[0,nt]^2}(s_1s_2)^{H-\frac{1}{2}}
	(n^{-2H}s_{1}^{2H}+(r_{1}+\frac{s_{1}}{n})^{2H}\wedge|r_{2}-r_{1}+\frac{s_{2}-s_{2}}{n}|^{2H})^{-1}\\
	&\times(1+n^{2H}(s_{1}^{2H}+(r_{1}+\frac{s_{1}}{n})^{2H}\wedge|r_{2}-r_{1}+\frac{s_{2}-s_{2}}{n}|^{2H}))^{-\frac{1}{2}}\\
	&\times(n^{-2H}s_{1}^{2H}+(r_{1}+\frac{s_{1}}{n})^{2H}\wedge|r_{2}-r_{1}+\frac{s_{2}-s_{2}}{n}|^{2H})^{-1}\\
	&\times(1+n^{2H}(s_{1}^{2H}+(r_{1}+\frac{s_{1}}{n})^{2H}\wedge|r_{2}-r_{1}+\frac{s_{2}-s_{2}}{n}|^{2H}))^{-\frac{1}{2}}
	d\vec{s}d\vec{r}.
\end{aligned}
\end{multline*}
Thus, by  changing of variables $(r_{1},r_{2})$ by $(r_{1}+\frac{s_{1}}{n},r_{2}+\frac{s_{2}}{n})$, we obtain
\begin{align*}
\E\left[n^{1-H}\left(\int_0^uF_{r,t}^{(f,n)}dr\right)^2\right]
  &\leq Cn^{-3H}\int_{[0,2t]^2}\int_{[0,nt]^2}(s_1s_2)^{H-\frac{1}{2}}
	(n^{-2H}s_{1}^{2H}+ r_{1} ^{2H}\wedge|r_{2}-r_{1} |^{2H})^{-1}\\
	&\times(1+n^{2H}(s_{1}^{2H}+ r_{1} ^{2H}\wedge|r_{2}-r_{1}|^{2H}))^{-\frac{1}{2}}\\
	&\times(n^{-2H}s_{1}^{2H}+r_{1}^{2H}\wedge|r_{2}-r_{1}|^{2H})^{-1}\\
	&\times(1+n^{2H}(s_{1}^{2H}+r_{1}^{2H}\wedge|r_{2}-r_{1}|^{2H}))^{-\frac{1}{2}}
	d\vec{s}d\vec{r}.
\end{align*}
By the symmetry on $r_{1}$ and $r_{2}$, we can assume without loss of generality that $r_{1}\leq r_{2}$. Thus, by  changing the coordinates $(r_{1},r_{2})$ by $(r:=r_{1},z:=r_{2}-r_{1})$, we obtain the inequality 
\begin{align*}
&\E\left[n^{1-H}\left(\int_0^uF_{r,t}^{(f,n)}dr\right)^2\right]
  \leq Cn^{-3H}\int_{[0,2t]^2}\int_{[0,nt]^2} (s_1s_2)^{H-\frac{1}{2}}\\
	&\times (n^{-2H}s_1^{2H}+r^{2H}\wedge z^{2H})^{-1}(1+n^{2H} s_1^{2H})^{-1/2}\\
	&\times (n^{-2H}s_2^{2H}+(r+z)^{2H}\wedge z^{2H})^{-1}(1+n^{2H} s_2^{2H})^{-1/2}d\vec{s}drdz.
\end{align*}
By distinguishing the cases $r\leq z$ and $z\leq r$, we get the bound
\begin{align}\label{eq:T1covdefTiT2decomp}
\E\left[n^{1-H}\left(\int_0^uF_{r,t}^{(f,n)}dr\right)^2\right]
  &\leq C(T_{1}^{n}+T_{2}^{n}),
\end{align}
where 
\begin{align}
T_{1}^{n}
  &:=Cn^{-3H}\int_{[0,2t]^2}\int_{[0,nt]^2}(s_1s_2)^{H-\frac{1}{2}} (n^{-2H}s_1^{2H}+r^{2H} )^{-1}(1+n^{2H} s_1^{2H})^{-1/2}\label{eq:T1covdef}\\
	&\times (n^{-2H}s_2^{2H}+ z^{2H})^{-1}(1+n^{2H} s_2^{2H})^{-1/2}d\vec{s}drdz\nonumber\\
T_{2}^{n}
	&:= Cn^{-3H}\int_{[0,2t]^2}\int_{[0,nt]^2} (s_1s_2)^{H-\frac{1}{2}}(n^{-2H}s_1^{2H}+ z^{2H})^{-1}(1+n^{2H} s_1^{2H})^{-1/2}\label{eq:T2covdef}\\
	&\times (n^{-2H}s_2^{2H}+ z^{2H})^{-1}(1+n^{2H} s_2^{2H})^{-1/2}d\vec{s}dzdr\nonumber.
\end{align}
Using the geometric mean/arithmetic mean inequality we deduce the existence of a constant $C>0$ such that 
\begin{align*}
T_{1}^{n}
  &\leq Cn^{-H}\int_{[0,2t]^2}\int_{[0,nt]^2}  (s_1s_2)^{-\frac{1}{2}} r^{-H}(1+n^{2H} s_1^{2H})^{-1/2} \\
	&\times z^{-H}(1+n^{2H} s_2^{2H})^{-1/2}d\vec{s}drdz\\
	&= C(1-H)^{-2}(2t)^{2-2H}n^{-H} \left(\int_{{[0,nt]}}  s^{-\frac{1}{2}} (1+n^{2H} s^{2H})^{-1/2} ds\right)^2.
\end{align*}
The convergence towards zero of the term $T_{1}^{n}$ is easy to get from here. In order to handle the term $T_{2}^{n}$,
we set
$\alpha:=\frac{5}{6}-\frac{1}{12H}$. Notice that $\alpha$ lies in the interval $(0,1)$ due to the condition $H\geq \frac{1}{3}$. Consequently, by the generalized geometric mean/arithmetic mean inequality, we deduce that for every $a,b>0$, 
\begin{align}\label{eq:auxarithgeommean}
(a^{2H}+b^{2H})^{-1}
  &\leq Ca^{-2H\alpha}b^{-2H(1-\alpha)}
	= Ca^{-\frac{5H}{3}+\frac{1}{6}}b^{-\frac{H}{3}-\frac{1}{6}}.
\end{align}
By first choosing $a=n^{-2H}s_1$, $b=z$ and then $a=n^{-2H}s_2$, $b=z$ in \eqref{eq:auxarithgeommean} and substituting the resulting inequalities in \eqref{eq:T2covdef}, we obtain the following bound for $T_{2}^{n}$:
\begin{align*}
T_{2}^{n}
	&\leq Cn^{\frac{1}{3}(H-1)}\int_{[0,2t]^2}\int_{[0,nt]^2} (zs_1s_2)^{-\frac{2H}{3}-\frac{1}{3}}(1+n^{2H} s_1^{2H})^{-1/2}(1+n^{2H} s_2^{2H})^{-1/2}d\vec{s}dzdr\\
	&= \frac{3}{2}(1-H)^{-1}(2t)^{\frac{1}{3}(5-2H)}Cn^{\frac{1}{3}(H-1)} \left(\int_{[0,nt]} s^{-\frac{2H}{3}-\frac{1}{3}}(1+n^{2H} s^{2H})^{-1/2}ds\right)^2.
\end{align*}
The right-hand side converges to zero due to the condition $H\geq\frac{1}{3}$. Condition  \textbf{(A2)} follows then from \eqref{eq:T1covdefTiT2decomp}. The proof is now complete.

%==================================================================================================================
%==================================================================================================================
\section{Proof of Theorem \ref{thm:main2}}  \label{sec:main2}
For any fixed $\lambda \in \R$, we put  
\[
\mathcal{D}_n(t):= n^{H}\int_0^{t } f(n^H(B_{s}-\lambda)) ds-L_{t}(\lambda)\int_{\R}f(x)dx
   -n^{-H}L_{t}^{\prime}(\lambda)\left(\int_{\R}yf(y)dy\right).
  \]
  Using the occupation measure formula and the change of variables  $n^H(x-\lambda)=y$, we can write
\begin{align*}
  n^{H}\int_0^{t } f(n^H(B_{s}-\lambda)) ds& =n^H \int_\R f(n^H(x-\lambda) L_t(x) dx\\
  &=  \int_\R f(y) L_t(\frac y{n^H} +\lambda) dy.
  \end{align*}
  As a consequence, we obtain
  \[
  \mathcal{D}_n(t)= \int_R f(y)  \Psi_n(y) dy,
  \]
  where
  \[
  \Psi_n(y): =L_t(\frac y{n^H} +\lambda)  - L_t(\lambda) - \frac y{n^H} L'_t(\lambda).
  \]
  Therefore,
  \begin{equation} \label{A1}
  \E [ n^{2H} \mathcal{D}^2_n(t)] =  n^{2H} \int_{\R^2} f(y) f(\wt{y})    \E[ \Psi_n(y) \Psi_n(\wt{y})]
 dyd\wt{y}.
 \end{equation}
 Then, we can compute the expectation  $ \E[ \Psi_n(y) \Psi_n(\wt{y})]$ using the Fourier  representation of the local time and its derivative given in  \eqref{eq:FourierrepL} and \eqref{eq:FourierrepLprime}:
 \[
\Phi_n(s,y)= \frac 1{2\pi}  \int_0^t \int_\R  e^{\textbf{i}\xi  (B_s-\lambda)}   \left[  e^{-\textbf{i}\frac{y\xi}{n^{H}}}-1+ \textbf{i}\frac{y}{n^{H}}\right]       d\xi  ds
\]
and in this way, we obtain
\begin{equation} \label{A2}
   \E[ \Psi_n(y) \Psi_n(\wt{y})] 
=   \frac{ 1}{4\pi^2}\int_{\R^{2}}\int_{[0,t]^2}  e^{-\frac{1}{2}\vec{\xi}^*\Sigma(\vec{s})\vec{\xi}}  
 \left[  e^{-\textbf{i}\frac{y\xi}{n^{H}}}-1+ \textbf{i}\frac{y}{n^{H}}\right]
  \left[  e^{-\textbf{i}\frac{\wt{y} \wt{\xi}}{n^{H}}}-1+ \textbf{i}\frac{\wt{y}}{n^{H}}\right]d\vec{s} d\vec{\xi},
\end{equation}
where $\vec{\xi}=(\xi,\wt{\xi})$, $\vec{y}=(y,\wt{y})$ and $\vec{s}=(s_1,s_2)$ and where $\Sigma(\vec{s})$ denotes the covariance matrix of $(B_{s_1},B_{s_2})$. Since $H<\frac{1}{3}$, there exists $0<\alpha<1\wedge \upsilon$, such that $3H+2H\alpha<1$. For such $\alpha$, we have that for all $z\in\R$
\begin{align*}
|e^{\textbf{i}z}-1-\textbf{i}z|
  &\leq|z|^{1+\alpha},
\end{align*}
which, combined with \eqref{A1} and \eqref{A2}, gives
\begin{equation}\label{eq:GtnormminusLoct2ss}
\E [ n^{2H} \mathcal{D}^2_n(t)] 
	\leq n^{-2\alpha H}\int_{\R^{4}}\int_{[0,T]^2}|\xi\wt{\xi}|^{1+\alpha} e^{-\frac{1}{2}\vec{\xi}^*\Sigma(\vec{s})\vec{\xi}}|y\wt{y}|^{1+\alpha}|f(y)f(\wt{y})|d\vec{s}d\vec{y}d\vec{\xi}.
\end{equation}
By applying Lemma 5.1 in the Appendix of \cite{JaNoPe} to the right-hand side, we get that 
\[
\E [ n^{2H} \mathcal{D}^2_n(t)] 
 \leq n^{-2\alpha H}\int_{\R^{2}}\int_{[0,T]^2}(s_1\vee s_2)^{-H}(s_1\wedge s_2\wedge|s-s_2|)^{-3H-2\alpha H}|y\wt{y}|^{1+\alpha}|f(y)f(\wt{y})|d\vec{s}d\vec{y}.
\]
Since $1-3H-2H\alpha>0$, from here we conclude that 
\[
\E [ n^{2H} \mathcal{D}^2_n(t)] 
  \leq C n^{- \alpha H}\int_{\R}|x|^{1+\alpha}|f(x)|dx.
\]
The result easily follows from here.
 
\section{Technical lemmas}\label{ref:technicallemmas}
We start with the proof of Lemma \ref{lemma1.2}, stated in Section 1.

\begin{proof} (of Lemma \ref{lemma1.2})
We use the inequality 
$|e^{\textbf{i}a}-1|\leq 2(1\wedge|a|)$ for all $a\in\R$, to deduce that 
\begin{align*}
|\mathcal{B}_{\eta}[f,g]|
  &\leq  4\int_{\R^2} |f(x )g(\wt{x})|(1\wedge|\eta x |)(1\wedge|\eta \wt{x}|)d\vec{x}.
\end{align*}
Consequently, 
\begin{multline*}
\int_{\R_{+}^2}\int_{\R} |\mathcal{B}_{\eta}[f,g]|\eta^2(s_1 s_2)^{H-\frac{1}{2}}
	e^{-\frac{1}{2}(\beta_{H,2}(s_1^{2H}+s_2^{2H})+\beta_{H,3}(s_1,s_2))|\eta^2}d\eta d\vec{s}
\\
\leq 4 \int_{\R_{+}^2}\int_{\R^3}|f(x )g(\wt{x})|(1\wedge|\eta x |)(1\wedge|\eta \wt{x}|)\eta^2(s_1s_2)^{H-\frac{1}{2}}
	e^{-\frac{1}{2}(\beta_{H,2}(s_1^{2H}+s_2^{2H})))\eta^2}d\vec{x}d\eta d\vec{s}.
\end{multline*}
By Lemma \ref{lem:integralfourierfinite} and taking into account that $f$ and $g$ belong to  $\Xi_1$, the integral in the right-hand side is bounded by a constant multiple of
\begin{align*}
& \|f\|_1 \|g \|_1\int_{\R_{+}^2}(s_1s_2)^{H-\frac{1}{2}}(s_1^{2H}+s_2^{2H})^{-\frac{3}{2}}(1\vee(s_2^{2H}+s_2^{2H}))^{-1}d\vec{s} \\
  & \qquad \leq 2 \|f\|_1 \|g \|_1 \int_{\R_{+}}\int_0^{s_2}(s_1s_2)^{H-\frac{1}{2}}s_2^{-3H}(1\wedge s_2^{-2H})d\vec{s}\\
	& \qquad = \frac{4}{2H+1} \|f\|_1 \|g \|_1 \int_{\R_{+}}s_2^{-H}(1\wedge s_2^{-2H})ds_2.
\end{align*}
The integral in the right-hand side is finite due to the condition $H>\frac{1}{3}$. The result follows from here.
\end{proof}

We now present a lemma containing a useful relationship between conditional variances with respect to different
$\sigma$-algebras generated by a Gaussian vector.

\begin{Lemma}\label{lem:flipvariance}
Let $(N_{1},\dots, N_{r},A,B)$ be a non-degenerate Gaussian vector and denote by $\Fc_{N},\Fc_{A},\Fc_{B}$ the $\sigma$-algebras 
generated by  $(N_{1},\dots, N_{r}), A$ and $B$ respectively. Then, 
\begin{align*}
{\rm Var} [B\ |\ \Fc_N\vee\Fc_{A}]
  &=\frac{{\rm Var}[A\ |\ \Fc_N\vee\Fc_{B}]{\rm Var}[B\ |\ \Fc_N]}{{\rm Var}[A\ |\ \Fc_N]}
\end{align*}
\end{Lemma}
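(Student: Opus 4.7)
The plan is to reduce the identity to a one-dimensional statement by first projecting out the information contained in $\Fc_N$. Define the residuals
\[
\widetilde{A}:=A-\E[A\,|\,\Fc_N],\qquad \widetilde{B}:=B-\E[B\,|\,\Fc_N].
\]
Since the vector $(N_1,\dots,N_r,A,B)$ is Gaussian, the pair $(\widetilde{A},\widetilde{B})$ is a centered Gaussian vector independent of $\Fc_N$. Moreover, $\Fc_N\vee\Fc_A=\Fc_N\vee\sigma(\widetilde{A})$ and similarly with $B$, which immediately yields the identifications
\[
\Var[A\,|\,\Fc_N]=\Var(\widetilde{A}),\quad \Var[B\,|\,\Fc_N]=\Var(\widetilde{B}),
\]
together with
$\Var[B\,|\,\Fc_N\vee\Fc_A]=\Var[\widetilde{B}\,|\,\widetilde{A}]$ and
$\Var[A\,|\,\Fc_N\vee\Fc_B]=\Var[\widetilde{A}\,|\,\widetilde{B}]$.

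Next I would apply the standard bivariate Gaussian conditioning formula to the pair $(\widetilde{A},\widetilde{B})$ to obtain
\[
\Var[\widetilde{B}\,|\,\widetilde{A}]=\Var(\widetilde{B})-\frac{\mathrm{Cov}(\widetilde{A},\widetilde{B})^{2}}{\Var(\widetilde{A})},\qquad \Var[\widetilde{A}\,|\,\widetilde{B}]=\Var(\widetilde{A})-\frac{\mathrm{Cov}(\widetilde{A},\widetilde{B})^{2}}{\Var(\widetilde{B})}.
\]
The non-degeneracy assumption on $(N_1,\dots,N_r,A,B)$ guarantees that $\Var(\widetilde A),\Var(\widetilde B)>0$, so all the ratios involved are meaningful.

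Finally, substituting into the right-hand side of the claimed identity gives
\[
\frac{\Var[A\,|\,\Fc_N\vee \Fc_B]\,\Var[B\,|\,\Fc_N]}{\Var[A\,|\,\Fc_N]}=\frac{\bigl(\Var(\widetilde{A})\Var(\widetilde{B})-\mathrm{Cov}(\widetilde{A},\widetilde{B})^{2}\bigr)}{\Var(\widetilde{A})}=\Var(\widetilde{B})-\frac{\mathrm{Cov}(\widetilde{A},\widetilde{B})^{2}}{\Var(\widetilde{A})},
\]
which coincides with $\Var[\widetilde{B}\,|\,\widetilde{A}]=\Var[B\,|\,\Fc_N\vee\Fc_A]$, completing the proof. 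There is no genuine obstacle here; the only delicate point is the bookkeeping ensuring that $\sigma(N_1,\dots,N_r,A)=\sigma(N_1,\dots,N_r,\widetilde{A})$, which follows because $\widetilde{A}$ differs from $A$ by an $\Fc_N$-measurable function, and symmetrically for $B$.
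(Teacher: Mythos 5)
Your proof is correct, but it follows a different route from the paper. The paper's argument is purely algebraic via determinants: it writes the determinant of the covariance matrix of $(N_1,\dots,N_r,A,B)$ in two ways as a product of successive conditional variances,
\[
{\rm Var}[N_1]\cdots{\rm Var}[N_r\,|\,N_1,\dots,N_{r-1}]\,{\rm Var}[A\,|\,\Fc_N]\,{\rm Var}[B\,|\,\Fc_A\vee\Fc_N]
={\rm Var}[N_1]\cdots{\rm Var}[N_r\,|\,N_1,\dots,N_{r-1}]\,{\rm Var}[B\,|\,\Fc_N]\,{\rm Var}[A\,|\,\Fc_B\vee\Fc_N],
\]
and cancels the common factors (legitimate by non-degeneracy), which gives the identity in one line. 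You instead project out $\Fc_N$ by passing to the residuals $\widetilde A=A-\E[A\,|\,\Fc_N]$, $\widetilde B=B-\E[B\,|\,\Fc_N]$, use that in the Gaussian setting these are independent of $\Fc_N$ and generate the same enlarged $\sigma$-algebras, and then apply the bivariate conditional-variance formula; the identity reduces to the symmetry of $\Var(\widetilde A)\Var(\widetilde B)-\mathrm{Cov}(\widetilde A,\widetilde B)^2$. Both arguments are complete: yours is more elementary in that it only invokes the two-dimensional Gaussian conditioning formula, at the cost of the (correctly handled) bookkeeping that $\Fc_N\vee\Fc_A=\Fc_N\vee\sigma(\widetilde A)$ and that $(\widetilde A,\widetilde B)$ is independent of $\Fc_N$; the paper's determinant factorization avoids any explicit computation with covariances and makes the symmetry between $A$ and $B$ manifest immediately.
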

\begin{proof}
The determinant of the covariance matrix of $(N_{1},\dots, N_{r},A,B)$ can be written as 
\begin{align*}
{\rm Var}[N_{1}]{\rm Var}[N_{2}|N_1]\cdots {\rm Var}[N_{r}|N_1,\dots, N_{r-1}]{\rm Var}[A|\Fc_{N}]{\rm Var}[B|\Fc_{A}\vee\Fc_{N}]
\end{align*}
and, also as
\begin{align*}
{\rm Var}[N_{1}]{\rm Var}[N_{2}|N_1]\cdots{\rm  Var}[N_{r}|N_1,\dots, N_{r-1}]{\rm Var}[B|\Fc_{N}]{\rm Var}[A|\Fc_{B}\vee\Fc_{N}].
\end{align*}
This equality implies the result.
\end{proof}

The following lemma gives an upper bound for the determinant of the covariance matrix of a bidimensional vector built from the increments of the fractional Brownian motion $B$.

\begin{Lemma}\label{Lem:Varconditionedonincrements}
Fix  $0<a<b$  and $h>-a$.   Let $\Sigma$ denote the covariance 
matrix of  $(\Delta_{a}B,\Delta_{b}B)$, where $\Delta_{a}B:=B_{a+h}-B_{a}$ and $\Delta_{b}B:=B_{b+h}-B_{h}$.
Then, there exists a constant $\delta>0$ such that
\[
\det \Sigma \ge  \delta 
\left\{
\begin{array}{ll}
a^{2H} (b-a-h)^{2H} & {\rm if} \,\,  0<h<b-a, \\
a^{2H} [(h-(b-a))\wedge (b-a)]^{2H} & {\rm if} \,\,   b-a<h, \\
(a- |h|)^{2H} [(b-a- |h|)\wedge  |h| ]^{2H}    & {\rm if} \,\,  h<0 \,\, {\rm and} \,\,  |h| < b-a, \\
(a- |h|)^{2H} [ (|h|-(b-a))\wedge  (b-a)]^{2H}    & {\rm if} \,\, h<0 \,\, {\rm and} \,\,  |h| > b-a. 
\end{array}
 \right.
\]
\end{Lemma}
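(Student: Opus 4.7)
The plan is to prove the lower bound by combining the Gaussian chain rule
$\det\Sigma = \text{Var}(\Delta_a B)\cdot\text{Var}(\Delta_b B\mid \Delta_a B)$
with the strong local non-determinism property \eqref{LND}, carrying out a separate analysis in each of the four geometric configurations of $\{a, a+h, b, b+h\}$ enumerated in the statement. The variance-flipping identity just established in Lemma \ref{lem:flipvariance} will play the role of a key bridge: it will let us re-express a conditional variance of an \emph{increment} of $B$ as a ratio of conditional variances of point values $B_t$, to which \eqref{LND} applies directly in terms of distances between $t$ and the conditioning times.

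For the model case (case~1, $0<h<b-a$), the four points order as $a<a+h<b<b+h$ and the plan is as follows. Starting from the chain-rule decomposition of $\det\Sigma$, I would apply Lemma \ref{lem:flipvariance} to $\text{Var}(\Delta_b B\mid \Delta_a B)$, taking the conditioning variables to be drawn from $\{B_a, B_{a+h}, B_b, B_{b+h}\}$. This reformulates the bound as a product of lower bounds of the form $\text{Var}(B_t\mid B_{t_1},\ldots,B_{t_k})\geq \kappa_H(\min_j |t-t_j|)^{2H}$. The two distance factors $a^{2H}$ (distance from $0$ to the first increment interval, which controls one of the conditional variances via \eqref{LND}) and $(b-a-h)^{2H}$ (the gap between the two increment intervals) then emerge as the two relevant minima, producing the claimed product bound.

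The remaining three cases proceed by the same template but with the four points in different orders, leading to different binding distances in \eqref{LND}. For cases 3 and 4 (with $h<0$), the point $a+h$ becomes the left endpoint of the first interval, which is why $a-|h|$ replaces $a$ throughout. The main obstacle will be cases~2 and~4 (where $|h|>b-a$): in these regimes the two increment intervals overlap substantially, and the four endpoints interleave in a non-trivial way, so that there is no single clear ``gap'' to exploit. The binding length scale there is $[(|h|-(b-a))\wedge(b-a)]^{2H}$, namely the shorter of the two disjoint subintervals carved out by the overlap, and verifying that \eqref{LND} delivers precisely this $\wedge$ expression will require careful case-work to identify, for each sub-configuration, the conditioning time that realizes the infimum $\min_j |t-t_j|$ in \eqref{LND}. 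Once this identification is done in each sub-case, the bookkeeping is routine and the constant $\delta$ can be taken to depend only on $H$ through $\kappa_H$.
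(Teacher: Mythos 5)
Your starting identity is the wrong one, and this derails the whole plan. If $\Sigma$ were literally the unconditional covariance matrix of $(\Delta_aB,\Delta_bB)$, then
$\det\Sigma=\mathrm{Var}(\Delta_aB)\,\mathrm{Var}(\Delta_bB\mid\Delta_aB)\le \mathrm{Var}(\Delta_aB)\,\mathrm{Var}(\Delta_bB)=|h|^{4H}$,
which tends to $0$ as $h\to0$ with $a,b$ fixed, while the claimed lower bound $\delta\,a^{2H}(b-a-h)^{2H}$ stays bounded away from $0$; so the inequality you are setting out to prove is false for that matrix. The object the lemma is actually about (and the one to which it is applied in Step IV) is the \emph{conditional} covariance matrix of the pair of point values $(B_a,B_b)$ given the two increments, whose determinant factors as $\mathrm{Var}[B_a\mid\Delta_aB,\Delta_bB]\cdot\mathrm{Var}[B_b\mid B_a,\Delta_aB,\Delta_bB]$. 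This is the only place the factor $a^{2H}$ can come from: the increments determine $B_a$ only up to a variance of order $a^{2H}$. There is no way to extract $a^{2H}$ from $\mathrm{Var}(\Delta_aB)=|h|^{2H}$, and your heuristic ``distance from $0$ to the first increment interval'' has no counterpart in your decomposition.

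Second, Lemma \ref{lem:flipvariance} does not do what you want it to do: it exchanges the roles of the two non-conditioned variables $A$ and $B$ while keeping the background $\sigma$-algebra $\mathcal F_N$ fixed, so applied to $\mathrm{Var}(\Delta_bB\mid\Delta_aB)$ it only produces conditional variances of \emph{increments}, never of point values $B_t$ to which \eqref{LND} applies directly. In the actual proof the flipping lemma is applied to the second factor $\mathrm{Var}[B_b\mid B_a,\Delta_aB,\Delta_bB]$, rewriting it as $\mathrm{Var}[\Delta_bB\mid B_a,\Delta_aB,B_b]\,\mathrm{Var}[B_b\mid B_a,\Delta_aB]\,/\,\mathrm{Var}[\Delta_bB\mid B_a,\Delta_aB]$, after which the numerator is bounded below by \eqref{LND} and the denominator is bounded above case by case; the first factor $\mathrm{Var}[B_a\mid\Delta_aB,\Delta_bB]\ge\delta a^{2H}$ is obtained by enlarging the conditioning to a chain of consecutive increments $A_1,A_2,A_3$ spanning the interval between the four endpoints and comparing two factorizations of the same $4\times4$ covariance determinant. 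Neither of these two steps, which carry essentially all the content of the proof, is reachable from your set-up, so the proposal cannot be completed as written.
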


\begin{proof}
We have the following formula for the determinant of $\Sigma$: 
\begin{equation} \label{EQ51}
\det \Sigma 
  ={\rm Var}[B_a\ |\ \Delta_aB,\Delta_bB]{\rm Var}[B_{b}\ |\ B_a,\Delta_aB,\Delta_bB].
\end{equation}
Let $\Sigma_1$ denotes the covariance matrix of the random vector $(B_a, \Delta_aB,\Delta_bB)$  and let 
$\Sigma_2$ be the covariance matrix of the random vector $(\Delta_aB,\Delta_bB)$.
Then,
\[
{\rm Var}[B_a\ |\ \Delta_aB,\Delta_bB] = \frac { \det \Sigma_1}{\det \Sigma_2} =  \frac { {\rm Var}[B_a] {\rm Var}[\Delta_a B  \ |\    B_a] {\rm Var}[\Delta_b B \ |\ B_a, \Delta_a B] }      {\det \Sigma_2}.
\]
We distinguish several cases:

\medskip
\noindent {\it Case (i)} Suppose $a<a+h<b <b+h$. 
Consider the random variables $A_1=\Delta _a B$, $A_2=B_b-B_{a+h}$ and $A_3= \Delta _b B$.
Then, by the local nondeterminism property of $B$, we can write
\begin{align*}
{\rm Var}[B_a\ |\ \Delta_aB,\Delta_bB]   &\ge {\rm Var}[B_a\ |\ A_1,A_2,A_3]   \\
& \ge \frac {{\rm Var} [B_a] {\rm Var} [A_1 \ |\ B_a] {\rm Var} [A_2 \ |\ B_a,A_1] {\rm Var} [A_3 \ |\ B_a,A_1,A_2]}
 {{\rm Var} [A_1] {\rm Var} [A_2 \ |\ A_1] {\rm Var} [A_3 \ |\ A_1,A_1] } \\
 & \ge \delta a^{2H}.
 \end{align*}
Then, by the local nondeterminism property of $B$, we can write
\[
{\rm Var}[B_a\ |\ \Delta_aB,\Delta_bB]  \ge \delta \frac { a^{2H}  h^{2H}  h^{2H}} {h^{4h}}  = \delta a^{2H}.
\]
Consider now the second factor in the right-hand side of \eqref{EQ51}.  Using Lemma  \ref{ref:technicallemmas} we obtain
\begin{align*}
{\rm Var}[B_{b}\ |\ B_a,\Delta_aB,\Delta_bB] & = \frac { {\rm Var}[\Delta_bB \ |\ B_a,\Delta_aB,B_b]  {\rm Var}[B_{b}\ |\ B_a,\Delta _a B ] }
{{\rm Var}[\Delta_b B\ |\ B_a,\Delta_aB] } \\
&\ge  \delta \frac { h^{2H} (b-a-h)^{2H}}  { h^{2H}} = \delta (b-a-h)^{2H}.
\end{align*}

\medskip
\noindent {\it Case (ii)} Suppose $a<b<a+h<b+h$. Proceeding as in case (i), but with the  random variables $A_1=B_b-B_a$, $A_2=B_{a+h}-B_b$ and $A_3=B_{b+h} -B_{a+h}$ we obtain
\[
{\rm Var}[B_a\ |\ \Delta_aB,\Delta_bB]  \ge {\rm Var}[B_a\ |\ A_1,A_2,A_3]  \ge   \delta a^{2H}.
\]
Consider now the second factor in the right-hand side of \eqref{EQ51}.  Using Lemma  \ref{ref:technicallemmas} we obtain
\begin{align*}
{\rm Var}[B_{b}\ |\ B_a,\Delta_aB,\Delta_bB] & = \frac { {\rm Var}[\Delta_bB \ |\ B_a,\Delta_aB,B_b]  {\rm Var}[B_{b}\ |\ B_a,\Delta _a B ] }
{{\rm Var}[\Delta_b B\ |\ B_a,\Delta_aB] } \\
&\ge  \delta \frac {(b-a)^{2H}   ((a+h-b)\wedge(b-a))^{2H}  } {{\rm Var}[\Delta_b B\ |\ B_a,\Delta_aB] }.
\end{align*}
We can get the following upper bound for the denominator of the above expression:
\begin{align*}
{\rm Var}[\Delta_b B\ |\ B_a,\Delta_aB]  & \le 2{\rm Var}[B_{b+h} -B_{a+h} \ |\ B_a,B_{a+h}]  +2{\rm Var}[B_{a+h} -B_{b} \ |\ B_a,B_{a+h}]  \\
& \le 2(b-a)^{2H}  +2{\rm Var}[B_{b} -B_{a} \ |\ B_a,B_{a+h}]  \le 4 (b-a)^{2H}.
\end{align*}
 As a consequence,
 \[
 {\rm Var}[B_{b}\ |\ B_a,\Delta_aB,\Delta_bB] \ge \delta ((a+h-b)\wedge(b-a))^{2H} .
 \]

\medskip
\noindent {\it Case (iii)}  Suppose $a+h<a < b+h <b$. Proceeding as in case (i), but with the  random variables $A_1=B_a-B_{a+h}$, $A_2=B_{b+h}-B_{a}$ and $A_3=B_{b} -B_{b+h}$ we obtain
\[
{\rm Var}[B_a\ |\ \Delta_aB,\Delta_bB]  =  {\rm Var}[B_{a+h}\ |\ \Delta_aB,\Delta_bB]  \ge {\rm Var}[B_{a+h}\ |\ A_1,A_2,A_3]  \ge   \delta (a+h)^{2H}.
\]
Consider now the second factor in the right-hand side of \eqref{EQ51}.  Using Lemma  \ref{ref:technicallemmas} we obtain
\begin{align*}
{\rm Var}[B_{b}\ |\ B_a,\Delta_aB,\Delta_bB] & = \frac { {\rm Var}[\Delta_bB \ |\ B_a,\Delta_aB,B_b]  {\rm Var}[B_{b}\ |\ B_a,\Delta _a B ] }
{{\rm Var}[\Delta_b B\ |\ B_a,\Delta_aB] } \\
&\ge  \delta \frac {    ((b-|h|-a)\wedge |h|)^{2H} (b-a)^{2H}  } {{\rm Var}[\Delta_b B\ |\ B_a,\Delta_aB] }.
\end{align*}
We can get the following upper bound for the denominator of the above expression:
\begin{align*}
{\rm Var}[\Delta_b B\ |\ B_a,\Delta_aB]  & \le |h|^{2H} \le (b-a)^{2H}.
\end{align*}
 As a consequence,
 \[
 {\rm Var}[B_{b}\ |\ B_a,\Delta_aB,\Delta_bB] \ge \delta   ((b-|h|-a)\wedge |h|)^{2H} .
 \]

\medskip
\noindent {\it Case (iv)}  Suppose $a+h<b+h <a<b$. Proceeding as in case (i), but with the  random variables $A_1=B_{b+h}-B_{a+h}$, $A_2=B_{a}-B_{b+h}$ and $A_3=B_{b} -B_{a}$ we obtain
\[
{\rm Var}[B_a\ |\ \Delta_aB,\Delta_bB]  =  {\rm Var}[B_{a+h}\ |\ \Delta_aB,\Delta_bB]  \ge {\rm Var}[B_{a+h}\ |\ A_1,A_2,A_3]  \ge   \delta (a+h)^{2H}.
\]
Consider now the second factor in the right-hand side of \eqref{EQ51}.  Using Lemma  \ref{ref:technicallemmas} we obtain
\begin{align*}
{\rm Var}[B_{b}\ |\ B_a,\Delta_aB,\Delta_bB] & = \frac { {\rm Var}[\Delta_bB \ |\ B_a,\Delta_aB,B_b]  {\rm Var}[B_{b}\ |\ B_a,\Delta _a B ] }
{{\rm Var}[\Delta_b B\ |\ B_a,\Delta_aB] } \\
&\ge  \delta \frac {    ((a-b-|h|)\wedge (b-a))^{2H} (b-a)^{2H}  } {{\rm Var}[\Delta_b B\ |\ B_a,\Delta_aB] }.
\end{align*}
We can get the following upper bound for the denominator of the above expression:
\begin{align*}
{\rm Var}[\Delta_b B\ |\ B_a,\Delta_aB]  & \le 2 {\rm Var}[B_{b+h} - B_{a+h} \ |\ B_a,\Delta_aB]  
+   2 {\rm Var}[B_{b} - B_{a} \ |\ B_a,\Delta_aB]   \\
& \le 4  (b-a)^{2H}.
\end{align*}
 As a consequence,
 \[
 {\rm Var}[B_{b}\ |\ B_a,\Delta_aB,\Delta_bB] \ge \delta   ((a-b-|h|)\wedge (b-a))^{2H} .
 \]
 \end{proof}

The next lemma gives lower and upper bounds for the kernel $K_H(t,s)$:
\begin{Lemma}\label{lem:Kbounds}
For all $0<H<1$ and $0\leq s\leq t$, the following bounds hold:
\begin{enumerate}
\item[(i)] If $H>\frac{1}{2}$,
\begin{align}\label{eq:Kbounds}
\Cth(H-1/2)^{-1}(t-s)^{H-\frac{1}{2}}
  \leq K_H(t,s)
  \leq \bigg(\frac{t}{s}\bigg)^{H-\frac{1}{2}}\Cth(H-1/2)^{-1}(t-s)^{H-\frac{1}{2}}.
\end{align}

\item[(ii)] If $H\leq \frac{1}{2}$,
\begin{multline}\label{eq:Kbounds2}
\Cth\left[\left(\frac{t}{s}\right)^{H-\frac{1}{2}}(t-s)^{H-\frac{1}{2}}+\frac{1/2-H}{1/2+H}t^{-1}\bigg(\frac{t}{s}\bigg)^{H-\frac{1}{2}}
  (t-s)^{H+\frac{1}{2}}\right]\\
\begin{aligned}
  &\leq K_H(t,s)
	\leq \Cth\left[\left(\frac{t}{s}\right)^{H-\frac{1}{2}}(t-s)^{H-\frac{1}{2}}
	+\frac{1/2-H}{1/2+H}s^{-1}
  (t	-s)^{H+\frac{1}{2}}\right].
\end{aligned}
\end{multline}
 
\end{enumerate}
\end{Lemma}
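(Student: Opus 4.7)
The plan is to obtain both sets of bounds by direct monotonicity arguments applied to the integrand inside the integral representations (\ref{eq:KhdefHbig}) and (\ref{eq:KhdefHbig2}), and then evaluating the resulting elementary integrals explicitly.

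For part (i), i.e.\ the case $H>\frac{1}{2}$, the integrand is $(u-s)^{H-3/2} u^{H-1/2}$ for $u\in[s,t]$. Since $H-\frac{1}{2}>0$, the factor $u^{H-1/2}$ is increasing on $[s,t]$, hence
\[
s^{H-1/2}\,\leq\, u^{H-1/2}\,\leq\, t^{H-1/2}.
\]
Substituting these two inequalities into $K_H(t,s)=\Cth s^{1/2-H}\int_s^t (u-s)^{H-3/2} u^{H-1/2}\,du$ and evaluating $\int_s^t(u-s)^{H-3/2}du = (H-\frac12)^{-1}(t-s)^{H-1/2}$ produces the constant factor $\Cth(H-\frac12)^{-1}(t-s)^{H-1/2}$ on the lower side, and multiplies it by $(t/s)^{H-1/2}$ on the upper side. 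This matches \eqref{eq:Kbounds} exactly.

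For part (ii), i.e.\ the case $H\le \frac{1}{2}$, the key observation is that the correction term
\[
J(t,s):=s^{1/2-H}\int_s^t u^{H-3/2}(u-s)^{H-1/2}\,du
\]
enters $K_H(t,s)$ with coefficient $-(H-\frac12)=\frac12-H\ge 0$, so lower/upper bounds on $J(t,s)$ translate to lower/upper bounds on $K_H$. Since $H-\frac{3}{2}<0$, the factor $u^{H-3/2}$ is decreasing on $[s,t]$, giving
\[
t^{H-3/2}\,\leq\, u^{H-3/2}\,\leq\, s^{H-3/2}.
\]
Integrating $(u-s)^{H-1/2}$ over $[s,t]$ yields $(H+\frac12)^{-1}(t-s)^{H+1/2}$, so the lower bound for $J$ is $\frac{1}{H+1/2} s^{1/2-H}t^{H-3/2}(t-s)^{H+1/2}$ and the upper bound is $\frac{1}{H+1/2}s^{-1}(t-s)^{H+1/2}$. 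Recognizing $s^{1/2-H}t^{H-3/2}=t^{-1}(t/s)^{H-1/2}$ in the lower bound rewrites it in the form displayed in \eqref{eq:Kbounds2}, and the upper bound is already in the desired form.

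There is no real obstacle; the only subtle point is the sign bookkeeping in part (ii), where one must carefully track that $-(H-\frac12)\ge 0$ so that bounding $J(t,s)$ from below (resp.\ above) yields a bound on $K_H(t,s)$ in the same direction. Once this is kept straight, the proof reduces to the two one-line monotonicity observations on $u^{H-1/2}$ and $u^{H-3/2}$, together with explicit evaluation of an elementary power integral.
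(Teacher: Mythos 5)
Your proof is correct and follows essentially the same route as the paper: bound the monotone factor ($u^{H-1/2}$ for $H>\frac12$, $u^{H-3/2}$ for $H<\frac12$) by its values at the endpoints of $[s,t]$ and evaluate the remaining power integral explicitly, with the sign of the coefficient $\frac12-H$ tracked in case (ii). The only cosmetic difference is that the paper first performs the shift $u\to s+u$ before bounding, and it also records that the case $H=\frac12$ is trivial ($K_{1/2}(t,s)=\mathbf{1}_{\{s<t\}}$, so both sides of \eqref{eq:Kbounds2} equal $1$), a one-line remark worth adding since your part (ii) is stated for $H\le\frac12$ while the representation \eqref{eq:KhdefHbig2} you use covers only $H<\frac12$.
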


\begin{proof}
The inequalities are trivial in the case $H=\frac{1}{2}$, so we only consider the cases $H>\frac{1}{2}$ and $H<\frac{1}{2}$.\\

\noindent {\it Case} $H>\frac 12$:
In this case, by \eqref{eq:KhdefHbig} we have
\begin{align*}
K_H(t,s)
	&\geq \Cth s^{\frac{1}{2}-H}\int_{s}^{t}(u-s)^{H-\frac{3}{2}}s^{H-\frac{1}{2}}du
	  =\Cth(H-1/2)^{-1}(t-s)^{H-\frac{1}{2}}, 
\end{align*}
as required. For the upper bound, we observe that 
\begin{align*}
K_H(t,s)
  &=\Cth s^{\frac{1}{2}-H}\int_{0}^{t-s}u^{H-\frac{3}{2}}(s+u)^{H-\frac{1}{2}}du\\
  &\leq \Cth s^{\frac{1}{2}-H}t^{H-\frac{1}{2}}\int_{0}^{t-s}u^{H-\frac{3}{2}}du
	=\bigg(\frac{t}{s}\bigg)^{H-\frac{1}{2}}\Cth(H-1/2)^{-1}(t-s)^{H-\frac{1}{2}}.
\end{align*}
Relation \eqref{eq:Kbounds} follows from here.

\medskip
\noindent {\it Case $H<\frac 12$:}
We use \eqref{eq:KhdefHbig2} to write
\begin{align*}
K_H(t,s)
	&=\Cth\left[\left(\frac{t}{s}\right)^{H-\frac{1}{2}}(t-s)^{H-\frac{1}{2}}+(\frac{1}{2}-H)s^{\frac{1}{2}-H}
	\int_{0}^{t-s}(s+u)^{H-\frac{3}{2}}u^{H-\frac{1}{2}}du\right].
\end{align*}
The integral of the second term in parenthesis in the right-hand side can be bounded as follows
\begin{multline*}
(H+\frac{1}{2})^{-1}t^{H-\frac{3}{2}}(t-s)^{H+\frac{1}{2}}
  =\int_{0}^{t-s}t^{H-\frac{3}{2}}u^{H-\frac{1}{2}}du\\
\begin{aligned}
  &\leq \int_{0}^{t-s}(s+u)^{H-\frac{3}{2}}u^{H-\frac{1}{2}}du
	\leq \int_{0}^{t-s}s^{H-\frac{3}{2}}u^{H-\frac{1}{2}}du=(H+\frac{1}{2})^{-1}s^{H-\frac{3}{2}}(t-s)^{H+\frac{1}{2}}
\end{aligned}
\end{multline*}
Relation \eqref{eq:Kbounds2} thus follows by adding $\left(\frac{t}{s}\right)^{H-\frac{1}{2}}(t-s)^{H-\frac{1}{2}}$
in both sides of the previous inequality, and then multiplying by $\Cth$.\\
\end{proof}

The following estimates for the kernel $K_H$ plays a role in our proof of  \eqref{eq:Psininfgoal}.

\begin{Lemma}  \label{lem5.4}
Recall the constant $\beta_{H,1}$ introduced in  \eqref{eq:betadmain}.  Then,  for any $r,s >0$ and $n\in \N$ we have
\begin{equation}   \label{ECU2}
n^{H-\frac{1}{2}} s^{\frac 12-H} K_H(r+\frac{s}{n},r) \le  C
\begin{cases}
 \left(1+ \frac s{nr} \right)^{H-\frac 12}  & \text{if} \,\, H>\frac 12 \\
 1+ \frac 1r \left( \frac sn \right)^{H+\frac 12}& \text{if} \,\, H\le\frac 12,
 \end{cases}
\end{equation}
and
 \begin{equation}\label{eq:Ktolimrate}
|n^{H-\frac{1}{2}} s^{\frac 12-H} K_H(r+\frac{s}{n},r)-\beta_{H,1} |
  \leq C \frac{s}{rn},
\end{equation}
for some constant $C>0$ only depending on $H$.
\end{Lemma}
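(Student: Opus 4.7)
The proof will proceed by substituting $t = r + s/n$ (with second argument $r$) into the explicit two-sided estimates for $K_H$ furnished by the preceding Lemma \ref{lem:Kbounds}, and then multiplying through by the normalizing factor $n^{H-\frac12} s^{\frac12-H}$. Under this substitution, $t-s$ becomes $s/n$ and $t/s$ becomes $1 + s/(nr)$, so the entire proof reduces to elementary manipulations of the expression $(1+s/(nr))^{H-\frac12}$ together with a Taylor-type bound.

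In the case $H > \frac12$, Lemma \ref{lem:Kbounds}(i) combined with the definition $\beta_{H,1} = \Cth(H-\tfrac12)^{-1}$ yields, after multiplication by $n^{H-\frac12}s^{\frac12-H}$, the sandwich
\[
\beta_{H,1} \le n^{H-\frac12} s^{\frac12-H} K_H(r+\tfrac{s}{n},r) \le \beta_{H,1}\bigl(1 + \tfrac{s}{nr}\bigr)^{H-\frac12}.
\]
The right-hand inequality is precisely \eqref{ECU2}, while \eqref{eq:Ktolimrate} follows from the elementary concavity bound $(1+x)^{a}-1 \le a x$, valid for $a \in (0,1)$ and $x \ge 0$, applied with $a = H-\tfrac12$ and $x = s/(nr)$. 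The case $H = \tfrac12$ is immediate: $K_H \equiv \Indi{\{s<t\}}$ and $\beta_{H,1}=1$, so both estimates are trivial.

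In the case $H < \tfrac12$ we invoke Lemma \ref{lem:Kbounds}(ii); after multiplying by $n^{H-\frac12}s^{\frac12-H}$, the double inequality reads
\[
\Cth(1+\tfrac{s}{nr})^{H-\frac12}\!\left[1 + \tfrac{\frac12-H}{\frac12+H}\cdot\tfrac{s/n}{r+s/n}\right] \le n^{H-\frac12} s^{\frac12-H} K_H(r+\tfrac{s}{n},r) \le \Cth\!\left[(1+\tfrac{s}{nr})^{H-\frac12} + \tfrac{\frac12-H}{\frac12+H}\cdot\tfrac{s}{rn}\right].
\]
For the upper estimate \eqref{ECU2} one uses $(1+x)^{H-\frac12} \le 1$ (valid because $H-\tfrac12<0$) on the right-hand side; the resulting bound of the form $\Cth(1 + \text{const}\cdot s/(rn))$ is rearranged into the claimed form $C(1 + \frac1r(s/n)^{H+\frac12})$ by a case split on whether $s/n \le 1$ (where $s/n \le (s/n)^{H+\frac12}$) or $s/n \ge 1$ (where the remaining factor $(s/n)^{\frac12-H}$ is absorbed into the constant in the range of applications). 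For \eqref{eq:Ktolimrate} one applies the mean-value estimate $|(1+x)^{H-\frac12}-1| \le (\tfrac12-H)x$ on $[0,\infty)$, which follows from the bound $|\tfrac{d}{dx}(1+x)^{H-\frac12}| = (\tfrac12-H)(1+x)^{H-\frac32} \le \tfrac12-H$; this controls the deviation of the upper bound from $\Cth=\beta_{H,1}$, and combined with $(1+x)^{H-\frac12} \ge 1 - (\tfrac12-H)x$ applied to the lower bound (which itself exceeds $\Cth(1+s/(nr))^{H-\frac12}$ since the bracket is $\ge 1$), controls the deviation from below. Both deviations are of order $s/(rn)$, yielding \eqref{eq:Ktolimrate}.

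The proof is conceptually straightforward; the only nuisance is bookkeeping of the constants in the $H<\tfrac12$ regime so that $C$ depends only on $H$, and in particular recovering the specific form $(s/n)^{H+\frac12}$ in \eqref{ECU2} from the more natural quantity $s/n$ produced by Lemma \ref{lem:Kbounds}(ii) through the elementary case split described above.
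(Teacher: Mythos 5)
Your overall strategy is exactly the paper's: plug $t=r+\frac{s}{n}$ into the two-sided bounds of Lemma \ref{lem:Kbounds}, multiply by $n^{H-\frac12}s^{\frac12-H}$, and finish with elementary estimates on $(1+\frac{s}{rn})^{H-\frac12}$. Your proof of \eqref{eq:Ktolimrate} (both regimes, including the factorized form of the lower bound for $H<\frac12$ and the mean-value bounds) coincides with the paper's argument and is correct, as is \eqref{ECU2} for $H\ge\frac12$.

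The step that does not close is \eqref{ECU2} for $H<\frac12$. As you correctly computed, after normalization Lemma \ref{lem:Kbounds}(ii) yields $\Cth\big[(1+\frac{s}{rn})^{H-\frac12}+\frac{1/2-H}{1/2+H}\frac{s}{rn}\big]\le C\big(1+\frac{s}{rn}\big)$: the factor $n^{H-\frac12}s^{\frac12-H}$ converts $r^{-1}(\frac sn)^{H+\frac12}$ into $\frac{s}{rn}$, so the exponent $H+\frac12$ is gone. Your conversion back to the stated form $1+\frac1r(\frac sn)^{H+\frac12}$ then requires, on $\{\frac sn\ge 1\}$, absorbing the unbounded factor $(\frac sn)^{\frac12-H}$ into the constant, which you justify only ``in the range of applications''; that is not a proof of the lemma as stated, whose constant must depend only on $H$ and whose hypotheses allow all $r,s>0$, $n\in\N$. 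In fact no elementary manipulation of \eqref{eq:Kbounds2} can give the stated form in general: for $h=\frac sn\gg r$ the normalized kernel behaves like $(h/r)^{\frac12-H}$, which for $H<\frac16$ (take e.g. $h=r^2\to\infty$) is not dominated by $1+\frac{h^{H+\frac12}}{r}$. To be fair, the paper's own proof of \eqref{ECU2} is a one-line assertion, and what its cited estimates actually deliver is precisely your intermediate bound $C(1+\frac{s}{rn})$, which is all that is used downstream (in \eqref{E6} one has $r\ge\varepsilon$ and $s\le nT$, so either form is bounded by a constant depending on $\varepsilon$, $T$, $H$). The right move is therefore to flag the discrepancy rather than hide it in the constant: either prove and use the bound in the form $C(1+\frac{s}{rn})$, or restrict the statement to the range $s\le nT$, $r\ge\varepsilon$ actually needed.
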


\begin{proof}
The inequality \eqref{ECU2} is a consequence of the estimates  \eqref{eq:Kbounds} and \eqref{eq:Kbounds2}. 
It remains to show \eqref{eq:Ktolimrate}. In the case $H>\frac{1}{2}$, we use \eqref{eq:Kbounds}, to deduce that
\begin{align*}
\beta_{H,1} 
  \leq n^{H-\frac{1}{2}}s^{\frac 12- H}K_H(r+\frac{s}{n},r)
  \leq \big(1+\frac{s}{rn}\big)^{H-\frac{1}{2}}\beta_{H,1}.
\end{align*}
Inequality \eqref{eq:Ktolimrate} in the case $H>\frac{1}{2}$ then follows from the fact that 
\begin{align}\label{eq:ineqauxoneplussnor}
\big|\big(1+\frac{s}{rn}\big)^{H-\frac{1}{2}}-1\big|
  &\leq \frac{s}{nr}.
\end{align}

\noindent To handle the case $H<\frac{1}{2}$, we observe that by \eqref{eq:Kbounds2},
\begin{multline*}
\Cth\left[\left(1+\frac{s}{rn}\right)^{H-\frac{1}{2}} +\frac{1/2-H}{1/2+H}\frac{1}{n}(r+\frac{s}{n})^{-1}\left(1+\frac{s}{rn}\right)^{H-\frac{1}{2}}
  s \right]\\
\begin{aligned}
  &\leq n^{H-\frac{1}{2}} s^{ \frac 12 -H} K_H(r+\frac{s}{n},r)
	\leq \Cth\left[\left(1+\frac{s}{rn}\right)^{H-\frac{1}{2}} 
	+\frac{1/2-H}{1/2+H} \frac s{nr} 
  \right].
\end{aligned}
\end{multline*}
Thus, by \eqref{eq:ineqauxoneplussnor}, we obtain
\begin{align*}
\left|n^{H-\frac{1}{2}}  s^{\frac 12-H} K_H(r+\frac{s}{n},r)-\beta_{H,1}\right|
  &\leq \Cth  \left[\big|1-\big(1+\frac{s}{rn}\big)^{H-\frac{1}{2}}\big|
	+\frac{1/2-H}{1/2+H}\frac s {nr}
  \right]\\
	&\leq \Cth \frac s{nr}.
\end{align*}
This finishes the proof of \eqref{eq:Ktolimrate} in the case $H\leq\frac{1}{2}$.
\end{proof}
 
For $\mu_{r,s}$ defined by (\ref{eq:Brsmursdef}), we prove the following useful bounds.

\begin{Lemma}\label{lem:Mulimit}
Suppose that $s\ge 0$ and $r>0$ and  recall the definition of $\mu_{r,r+\frac{s}{n}}$  from (\ref{eq:Brsmursdef}). Then, if $H>\frac{1}{2}$,
\begin{align}\label{eq:murrsinequprev}
\frac{\Cth^2s^{2H}}{2H\big(H-1/2\big)^{2}}
  \leq n^{2H}\mu_{r,r+\frac{s}{n}}
  \leq \frac{\Cth^2s^{2H}}{2H\big(H-1/2\big)^{2}}\left(1+\frac s {rn}\right)^{2H-1},
\end{align}
while if $H<\frac{1}{2}$, 
\begin{align}\label{eq:murrsinequ}
\frac{\Cth^2s^{2H} }{2H}
  \leq n^{2H}\mu_{r,r+\frac{s}{n}}
  \leq \frac{\Cth^2s^{2H}}{2H}\left(1+\frac{1/2-H}{1/2+H}\frac{s}{rn}\right)^{2}.
\end{align}
In particular, there exists a constant $C>0$, only depending on $H$, such that
\begin{align}\label{eq:murrapproxbound}
|n^{2H}\mu_{r,r+\frac{s}{n}}-\beta_{H,2}s^{2H} |
  &\leq \frac{Cs^{2H+1}}{rn}\left(1+\frac s{rn})\right),
\end{align}
where  $\beta_{H,2}$ is defined in \eqref{eq:betadmain}.
\end{Lemma}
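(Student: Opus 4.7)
The plan is to plug the two-sided estimates for the kernel $K_H$ obtained in Lemma \ref{lem:Kbounds} directly into the defining integral
\[
\mu_{r,r+\frac{s}{n}} = \int_r^{r+\frac{s}{n}} K_H^2\bigl(r+\tfrac{s}{n},\theta\bigr)\,d\theta,
\]
and then to integrate term-by-term after a natural rescaling.

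First I would treat the case $H>\frac{1}{2}$. By \eqref{eq:Kbounds}, for $r\le \theta \le r+\frac{s}{n}$,
\[
\Cth(H-\tfrac{1}{2})^{-1}\bigl(r+\tfrac{s}{n}-\theta\bigr)^{H-\frac{1}{2}}
\le K_H(r+\tfrac{s}{n},\theta)
\le \Bigl(\tfrac{r+s/n}{\theta}\Bigr)^{H-\frac{1}{2}}\Cth(H-\tfrac{1}{2})^{-1}\bigl(r+\tfrac{s}{n}-\theta\bigr)^{H-\frac{1}{2}}.
\]
Squaring, integrating over $[r,r+\frac{s}{n}]$, and evaluating the elementary integral $\int_0^{s/n}u^{2H-1}du = (s/n)^{2H}/(2H)$ produces the lower bound in \eqref{eq:murrsinequprev}. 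For the upper bound, using $\theta\ge r$ gives the extra multiplicative factor $((r+s/n)/r)^{2H-1} = (1+s/(rn))^{2H-1}$, which yields \eqref{eq:murrsinequprev}.

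The case $H<\frac{1}{2}$ is analogous but more delicate because the two-sided bound \eqref{eq:Kbounds2} has an additional term $\frac{1/2-H}{1/2+H}(r+\tfrac{s}{n}-\theta)^{H+1/2}$ multiplied either by $(r+s/n)^{-1}$ (from below) or by $\theta^{-1}$ (from above). After squaring and integrating, the lower bound retains only the leading piece $\Cth^2 (s/n)^{2H}/(2H)$ (keeping the extra term non-negative is what I will need to verify carefully—one can simply discard it to retrieve a clean lower bound, noting the other bound in \eqref{eq:Kbounds2} gives $K_H \ge \Cth(r+s/n-\theta)^{H-1/2}(\theta/(r+s/n))^{1/2-H}\ge\Cth(r+s/n-\theta)^{H-1/2}$ after dropping the second term). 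For the upper bound, squaring the sum $A+B$ and using $(A+B)^2\le A^2(1+B/A)^2$, together with the explicit integrals $\int_0^{s/n}u^{2H-1}du$, $\int_0^{s/n}u^{2H}du$ and $\int_0^{s/n}u^{2H+1}du$, yields after factoring the estimate \eqref{eq:murrsinequ}. This bookkeeping step (keeping $r^{-1}$ factors versus $(r+s/n)^{-1}$ factors) is the main technical obstacle, but it is routine.

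Finally, the consequence \eqref{eq:murrapproxbound} is obtained from \eqref{eq:murrsinequprev} and \eqref{eq:murrsinequ} by the elementary inequalities
\[
0\le (1+x)^{2H-1}-1 \le C\,x(1+x)^{2H-2}\le Cx\qquad (H>\tfrac{1}{2}),
\]
and
\[
0\le \Bigl(1+\tfrac{1/2-H}{1/2+H}x\Bigr)^2-1 \le Cx(1+x)\qquad (H<\tfrac{1}{2}),
\]
applied with $x=s/(rn)$, after recalling the definition $\beta_{H,2}=(2H)^{-1}\beta_{H,1}^2$ from \eqref{eq:betadmain} so that the leading constants match. The factor $s^{2H+1}/(rn)$ on the right-hand side of \eqref{eq:murrapproxbound} comes precisely from multiplying the leading term $s^{2H}$ by the perturbation of size $s/(rn)$. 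The case $H=\frac{1}{2}$ is trivial since $K_{1/2}(t,s)=\Indi{\{s<t\}}$ and $\mu_{r,r+s/n}=s/n$, matching the limit $\beta_{1/2,2}s$ exactly with zero error.
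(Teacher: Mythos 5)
Your overall strategy is exactly the paper's: rewrite $\mu_{r,r+\frac{s}{n}}=\int_0^{s/n}K_H^2\bigl(r+\tfrac{s}{n},\,r+\tfrac{s}{n}-\theta\bigr)\,d\theta$, insert the two-sided kernel bounds of Lemma \ref{lem:Kbounds}, and integrate. The case $H>\frac12$, the upper bound for $H<\frac12$, and the passage to \eqref{eq:murrapproxbound} via the elementary inequalities you quote are all in line with the paper's argument and are fine.

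There is, however, a genuine gap in your lower bound for $H<\frac12$. Writing $t=r+\tfrac{s}{n}$ and $s'=t-\theta$, the first term of the lower bound in \eqref{eq:Kbounds2} is $\Cth\,(t/s')^{H-\frac12}(t-s')^{H-\frac12}$, and for $H<\frac12$ the prefactor $(t/s')^{H-\frac12}$ is $\le 1$, not $\ge 1$; so ``discarding the second term'' leaves you with something \emph{weaker} than $\Cth\,\theta^{H-\frac12}$, and the chain you assert (in which, moreover, the roles of $\theta$ and $r+\tfrac{s}{n}-\theta$ are interchanged) does not hold. Keeping both terms of \eqref{eq:Kbounds2} does not rescue the step either: squaring and integrating the full lower bound recovers the constant $\frac{\Cth^2}{2H}$ only up to a factor that degrades when $\tfrac{s}{rn}$ is large, whereas the lemma is claimed for all $s\le nT$ and $r>0$, and the exact constant is precisely what is needed to identify the limit $\beta_{H,2}s^{2H}$ in \eqref{eq:murrapproxbound}. (To be fair, the paper's own write-up of this step makes the same reduction, so you are reproducing its slip rather than inventing a new one.) A clean repair within the paper's toolkit: for $H<\frac12$ and fixed $s'$, the function $t\mapsto K_H(t,s')-\Cth(t-s')^{H-\frac12}$ tends to $0$ as $t\downarrow s'$ and, by \eqref{eq:derivKHts}, has derivative $(t-s')^{H-\frac32}\Cth\bigl[(s'/t)^{\frac12-H}+(\tfrac12-H)\bigr]>0$, hence $K_H(t,s')\ge\Cth(t-s')^{H-\frac12}$ pointwise; with this bound your computation then yields \eqref{eq:murrsinequ} and, combined with your upper bound, \eqref{eq:murrapproxbound} as you describe.
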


\begin{proof}
The case $H=\frac{1}{2}$ is clear, as in this instance $K_{H}(t,s)=1$ for all $0\leq s\leq t$. Thus, we will assume without loss of 
generality that $H\neq\frac{1}{2}$.\\

\noindent First we prove \eqref{eq:murrsinequprev} in the case $H>\frac{1}{2}$. Recall that $\mu_{r,s}= \int_r^s K_H^2(s,\theta) d\theta$, so that	
\begin{align}\label{eq:Kboundsprev1}
\mu_{r,r+\frac{s}{n}}= \int_r^{r+\frac{s}{n}} K_H^2(r+\frac{s}{n},\theta) d\theta
= \int_0^{\frac{s}{n}} K_H^2(r+\frac{s}{n},r+\theta) d\theta
= \int_0^{\frac{s}{n}} K_H^2(r+\frac{s}{n},r+\frac{s}{n}-\theta) d\theta.
\end{align}
We thus deduce from \eqref{eq:Kbounds} that
\begin{align*}
\mu_{r,r+\frac{s}{n}}
  &\leq \frac{\Cth^2}{\big(H-1/2\big)^{2}}\int_0^{\frac{s}{n}}\bigg(\frac{r+\frac{s}{n}}{r+\frac{s}{n}-\theta}\bigg)^{2H-1}\theta^{2H-1}d\theta\\
	&\leq \frac{\Cth^2}{\big(H-1/2\big)^{2}}\int_0^{\frac{s}{n}}\bigg(\frac{r+\frac{s}{n}}{r}\bigg)^{2H-1}\theta^{2H-1}d\theta,
\end{align*}
which gives
\begin{align}\label{eq:murrsinequ1}
\mu_{r,r+\frac{s}{n}}
	&\leq\frac{\Cth^2}{2H\big(H-1/2\big)^{2}}\left(1+\frac s{rn}\right)^{2H-1}\left(\frac sn\right)^{2H}.
\end{align}
To lower bound $\mu_{r,r+\frac{s}{n}}$, we apply \eqref{eq:Kboundsprev1}  and \eqref{eq:Kbounds} to get
\begin{align}\label{eq:murrsinequ2}
\mu_{r,r+\frac{s}{n}}
  &\geq \frac{\Cth^2}{\big(H-1/2\big)^{2}}\int_0^{\frac{s}{n}}\theta^{2H-1}d\theta
	=\frac{\Cth^2}{2H\big(H-1/2\big)^{2}}\left(\frac sn\right)^{2H}.
\end{align}
Relation \eqref{eq:murrsinequprev} follows from \eqref{eq:murrsinequ1} and  \eqref{eq:murrsinequ2}.

 To handle the case $H<\frac{1}{2}$, we use \eqref{eq:Kbounds2} to deduce that  
\begin{align}\label{eq:Kbounds2prime}
	K_H(t,s)\leq \Cth(t-s)^{H-\frac{1}{2}}\left(1 + \frac{1/2-H}{1/2+H}s^{-1}
  (t-s)\right).
\end{align}
The above inequality can be combined with \eqref{eq:Kboundsprev1},  and we can write
\begin{align*}
\mu_{r,r+\frac{s}{n}}
  &\leq \Cth^2\int_0^{\frac{s}{n}}\bigg(1+\frac{1/2-H}{1/2+H}
  \frac{s}{rn}\bigg)^{2}\theta^{2H-1}d\theta\\
	&= \frac{\Cth^2}{2H}\bigg(\frac{s}{n}\bigg)^{2H}\bigg(1+\frac{1/2-H}{1/2+H}
  \frac{s}{rn}\bigg)^{2}.
\end{align*}
On the other hand, by \eqref{eq:Kbounds2}, for all $0\leq s\leq t$,
\begin{align*}
\Cth\left(\frac{t}{s}\right)^{H-\frac{1}{2}}(t-s)^{H-\frac{1}{2}}
  &\leq K_H(t,s),
\end{align*}
which by \eqref{eq:Kboundsprev1} leads to 
\begin{align}\label{eq:Mrrpsv2proof22}
\mu_{r,r+\frac{s}{n}}
  &\geq \Cth^2\int_0^{\frac{s}{n}} \theta^{2H-1}d\theta
	\geq \frac{\Cth^2}{2H}\left(\frac sn\right)^{2H}.
\end{align}
Inequality \eqref{eq:murrsinequ} thus follows from \eqref{eq:Kbounds2prime} and \eqref{eq:Mrrpsv2proof22}.

Finally, relation \eqref{eq:murrapproxbound} in the case $H>\frac{1}{2}$ follows by applying the mean value theorem in \eqref{eq:murrsinequprev}, 
and in the case $H\leq\frac{1}{2}$, it follows by expanding the square in the right-hand side of \eqref{eq:murrsinequ}.
\end{proof}

The conclusion of the following lemma is needed in the proof of  \eqref{eq:Psininfgoal}.

\begin{Lemma}\label{lem:Binclimit}
Let $T,\ep>0$ be fixed. Then, there exists a constant $C>0$ only depending on 
$H,T$ and $\ep$, such that for all $s_1<s_2\leq nT$ and  $\ep<r\leq T$ ,
\begin{align}\label{eq:betatermvariancelimit}
|n^{2H}\E[(B_{r,r+\frac{s_1}{n}} - B_{r,r+\frac{s_2}{n}})^2]-\beta_{H,3}(s_1,s_2)|
  &\leq C\left( s_2^2 n^{2H-2} + s_2^{2H+1} n^{-1} \right),
\end{align}
where  $\beta_{H,3}(s_1,s_2)$ is defined in  \eqref{eq:c2Hssdefmain}.

\end{Lemma}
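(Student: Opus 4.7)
The approach is to compute $V_n := \E[(B_{r,r+s_1/n} - B_{r,r+s_2/n})^2]$ exactly via the Volterra representation of $B_{r,s}$ and then carry out a careful scaling analysis. By It\^o's isometry applied to $B_{r,s} = \int_0^r K_H(s,\theta)\, dW_\theta$, we have
\[
V_n = \int_0^r \bigl[K_H(r+s_2/n,\theta) - K_H(r+s_1/n,\theta)\bigr]^2 d\theta.
\]
Differentiating the formulas \eqref{eq:KhdefHbig} and \eqref{eq:KhdefHbig2} in $t$ yields the unified identity
\[
\partial_t K_H(t,\theta) = \beta_{H,1}(H-\tfrac 12)\, \theta^{\frac 12 - H}\, t^{H-\frac 12}\,(t-\theta)^{H-\frac 32},
\]
valid for every $H\neq 1/2$. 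Writing the kernel difference as $\int_{s_1/n}^{s_2/n}\partial_t K_H(r+v,\theta)\,dv$, squaring, and applying Fubini yields
\[
V_n = \beta_{H,1}^2(H-\tfrac 12)^2\int_0^r \theta^{1-2H}\left[\int_{s_1/n}^{s_2/n} (r+v)^{H-\frac 12}(r+v-\theta)^{H-\frac 32}\,dv\right]^2 d\theta.
\]

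Next I rescale via $\theta = r - u/n$ and $v = w/n$ to obtain
\[
n^{2H} V_n = \beta_{H,1}^2(H-\tfrac 12)^2\int_0^{rn}(r-u/n)^{1-2H}\left[\int_{s_1}^{s_2} (r+w/n)^{H-\frac 12}(u+w)^{H-\frac 32}\, dw\right]^2 du.
\]
Using the antiderivative identity $\int_{s_1}^{s_2}(u+w)^{H-3/2}\,dw = [(u+s_2)^{H-1/2}-(u+s_1)^{H-1/2}]/(H-1/2)$, the target constant reads
\[
\beta_{H,3}(s_1,s_2) = \beta_{H,1}^2(H-\tfrac 12)^2\int_0^{\infty}\left[\int_{s_1}^{s_2}(u+w)^{H-3/2}\,dw\right]^2 du,
\]
so $|n^{2H}V_n - \beta_{H,3}(s_1,s_2)|$ splits naturally into a tail contribution from $u\in[rn,\infty)$ in the ideal integral, plus a finite-$n$ correction on $[0,rn]$ coming from the factor $(r-u/n)^{1-2H}(r+w/n)^{2H-1} - 1$.

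For the tail, the estimate $\bigl[\int_{s_1}^{s_2}(u+w)^{H-3/2}dw\bigr]^2 \leq s_2^2\, u^{2H-3}$ (valid for $u\ge s_2$) integrated over $[rn,\infty)$ yields a contribution of order $s_2^2 n^{2H-2}$, using that $r\ge\varepsilon$ is bounded below. For the bulk integral, a first-order Taylor expansion on $u\le rn/2$ gives $|(r-u/n)^{1-2H}(r+w/n)^{2H-1} -1|\le C_{H,\varepsilon}(u+w)/n$, and this error, combined with the integrability of $(u+s_2)\bigl[\int_{s_1}^{s_2}(u+w)^{H-3/2}dw\bigr]^2$ in $u$, contributes $O(s_2^{2H+1}n^{-1})$.

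The main obstacle is the transition region $u\in[rn/2,rn]$, where $r-u/n$ becomes small and the Taylor-type bound above is no longer useful; note also that the exponent $1-2H$ on $(r-u/n)$ changes sign between $H>1/2$ and $H<1/2$, requiring separate treatment. There one must rely on cruder pointwise bounds, splitting further according to whether $u\geq s_2$ or $u<s_2$ and using $(u+w)^{H-3/2}\le (u\vee s_1)^{H-3/2}$ in each subregion; the careful bookkeeping needed to show that each sub-contribution fits into one of the two advertised rates $s_2^2 n^{2H-2}$ or $s_2^{2H+1}n^{-1}$, with no worse term appearing, is the genuinely delicate part of the argument.
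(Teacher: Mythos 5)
Your route is the same as the paper's: It\^o isometry for the increment variance, writing the kernel difference as an integral of $\partial_t K_H$, rescaling $\theta=r-u/n$, and splitting into a bulk region treated by a mean-value bound, a transition region near $u=rn$, and the tail of the limiting integral; your two rates $s_2^2n^{2H-2}$ and $s_2^{2H+1}n^{-1}$ correspond exactly to the paper's terms $A_n^2$, $A_n^3$ and the final tail estimate on one hand, and $A_n^1$ on the other.

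The genuine gap is the transition region $u\in[rn/2,rn]$, which you explicitly defer as ``the genuinely delicate part'': nothing is actually estimated there, so the advertised bound is asserted rather than proved on that region. In fact this piece is short (it is the paper's $A_n^3$): for $u\ge rn/2$ one has $\big|\int_{s_1}^{s_2}(u+w)^{H-3/2}dw\big|\le C(s_2-s_1)(rn/2)^{H-3/2}$, the factors $(r+w/n)^{H-1/2}$ are bounded above and below by constants depending only on $\varepsilon,T,H$, and $\int_{rn/2}^{rn}(r-u/n)^{1-2H}du=n\int_0^{r/2}v^{1-2H}dv\le Cn$ because $1-2H>-1$ for every $H\in(0,1)$, so the sign change of the exponent at $H=\tfrac12$ requires no genuinely separate treatment; bounding separately the contributions of $(n^{H-\frac12}\Delta_n)^2$ and of the limiting integrand on $[rn/2,rn]$ then gives $O(s_2^2n^{2H-2})$, one of your two rates. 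You should carry this out explicitly.

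Two further inaccuracies. First, in the bulk you invoke ``the integrability of $(u+s_2)\big[\int_{s_1}^{s_2}(u+w)^{H-3/2}dw\big]^2$ in $u$'' to conclude an $O(s_2^{2H+1}n^{-1})$ contribution; for $H>\tfrac12$ that function behaves like $s_2^2u^{2H-2}$ at infinity and is not integrable, and the correct outcome, obtained by splitting $u\le s_2$ and $u>s_2$ as in the paper's $A_n^1$ and $A_n^2$, is $O(s_2^{2H+1}n^{-1}+s_2^{2}n^{2H-2})$ --- still within the claimed bound, but not for the reason you give. Second, a constants issue for $H<\tfrac12$: your unified formula $\partial_tK_H(t,\theta)=\beta_{H,1}(H-\tfrac12)\theta^{\frac12-H}t^{H-\frac12}(t-\theta)^{H-\frac32}$ is the honest derivative of \eqref{eq:KhdefHbig2}, but it then produces the limit $\beta_{H,1}^2\int_0^\infty\big((u+s_2)^{H-\frac12}-(u+s_1)^{H-\frac12}\big)^2du$, which coincides with \eqref{eq:c2Hssdefmain} only when $H>\tfrac12$; for $H\in[\tfrac13,\tfrac12)$ your asserted identity for $\beta_{H,3}$ is not the paper's definition (which carries $\Cth^2|H-\tfrac12|^{-2}$), whereas the paper's computation matches its definition because it uses \eqref{eq:derivKHts}, which has no $(H-\tfrac12)$ factor. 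You must at least flag and reconcile this mismatch instead of silently identifying the two constants.
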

\begin{proof}
We first prove \eqref{eq:betatermvariancelimit}. To this end, we write
\begin{align*}
\E[(B_{r,r+\frac{s}{n}} - B_{r,r+\frac{s_2}{n}})^2]
  &=\int_0^r(K_H(r+\frac{s_1}{n},\theta)-K_H(r+\frac{s_2}{n},\theta))^2d\theta\\
  &=\frac{1}{n}\int_0^{nr}(K_H(r+\frac{s_1}{n},r-\frac{\theta}{n})-K_H(r+\frac{s_2}{n},r-\frac{\theta}{n}))^2d\theta.
\end{align*}
By defining $\Delta_n(\theta):=K_H(r+\frac{s_2}{n},r-\frac{\theta}{n})-K_H(r+\frac{s_1}{n},r-\frac{\theta}{n})$, we can write
\begin{align}\label{eq:Deltanthetaprev}
n^{2H}\E[(B_{r,r+\frac{s_1}{n}} - B_{r,r+\frac{s_2}{n}})^2]
  &=\int_0^{nr}(n^{H-\frac{1}{2}}\Delta_n(\theta))^2d\theta.
\end{align}
From \eqref{eq:KhdefHbig} and \eqref{eq:KhdefHbig2}, one can easily check that 
\begin{align}\label{eq:derivKHts}
\frac{\partial}{\partial t}K_H(t,s)
  &=\Cth s^{\frac{1}{2}-H} (t-s)^{H-\frac{3}{2}}t^{H-\frac{1}{2}},
\end{align}
and thus,
\begin{align*}
\Delta_n(\theta)
  &=\Cth(r-\frac{\theta}{n})^{\frac{1}{2}-H}\int_{r+\frac{s_1}{n}}^{r+\frac{s_2}{n}}(u+\frac{\theta}{n}-r)^{H-\frac{3}{2}}u^{H-\frac{1}{2}}du\\
	&=\Cth(r-\frac{\theta}{n})^{\frac{1}{2}-H}\int_{0}^{\frac{s_2-s_1}{n}}(\frac{s}{n}+u+\frac{\theta}{n})^{H-\frac{3}{2}}(r+\frac{s}{n}+u)^{H-\frac{1}{2}}du,
\end{align*}
so that 
\begin{align}\label{eq:Deltantheta}
n^{H-\frac{1}{2}}\Delta_n(\theta)
	&=\Cth(r-\frac{\theta}{n})^{\frac{1}{2}-H}\int_{0}^{ s_2-s_1 }(s_1+u+\theta)^{H-\frac{3}{2}}(r+\frac{s_1+u}{n})^{H-\frac{1}{2}}du.
\end{align}
Suppose first that $\theta\leq \frac{nr}{2}$. In this case, because   $T \ge r-\frac {\theta} n \ge \frac r2  \ge \frac \delta 2$, by the mean value theorem, for any $x>0$ we have
\begin{equation} \label{ECU4}
\left| \left( r-\frac \theta n \right)^{\frac 12-H}  \left( r-\frac \theta n  +x \right)^{H- \frac 12} -1 \right|
\le C x,
\end{equation}
for some constant $C$ depending on $T$ and $\delta$.   
From \eqref{eq:Deltantheta} and  the estimate \eqref{ECU4}, we can easily check that there exists a constant $C>0$, such that if $\theta\leq \frac{nr}{2}$, then 
\begin{align}\label{eq:Deltantheta2}
\bigg|n^{H-\frac{1}{2}}\Delta_n(\theta)
-\Cth\int_{0}^{s_2-s_1 }(s_1+u+\theta)^{H-\frac{3}{2}}du\bigg|
	&\leq C\bigg(\frac{\theta+s_2}{n}\bigg)\int_{0}^{ s_2-s _1}(s_1+u+\theta)^{H-\frac{3}{2}}du.
\end{align}
Moreover, it is easy to see that
\begin{align}\label{eq:Deltantheta2331}
n^{H-\frac{1}{2}}|\Delta_n(\theta)|
+\Cth\int_{0}^{s_2-s_1 }(s_1+u+\theta)^{H-\frac{3}{2}}du
	&\leq C \int_{0}^{ s_2-s }(s_1+u+\theta)^{H-\frac{3}{2}}du.
\end{align}
Thus, from \eqref{eq:Deltantheta2} and \eqref{eq:Deltantheta2331}, we deduce
\begin{multline}\label{eq:Deltantheta2ot}
\bigg|\big(n^{H-\frac{1}{2}}\Delta_n(\theta)\big)^2
- \frac{\Cth^2\big((s_2+\theta)^{H-\frac{1}{2}}-(s_1+\theta)^{H-\frac{1}{2}}\big)^2}{(H-1/2)^{2}}\bigg|\\
	\leq C\bigg(\frac{\theta+s_2}{n}\bigg)\big|(s_2+\theta)^{H-\frac{1}{2}}-(s_1+\theta)^{H-\frac{1}{2}}\big|^2.
\end{multline}
We therefore conclude that 
\begin{align*}
\int_{0}^{\frac{nr}{2}}\bigg|\big(n^{H-\frac{1}{2}}\Delta_n(\theta)\big)^2
- \frac{\Cth^2\big((s_2+\theta)^{H-\frac{1}{2}}-(s_1+\theta)^{H-\frac{1}{2}}\big)^2}{(H-1/2)^{2}}\bigg|d\theta
  &\leq C(A_{n}^1 +A_{n}^2) ,
\end{align*}
where 
\begin{align*}
A_{n}^1
	&:=\int_{\R_{+}}\Indi{\{\theta\leq s_2\}} \frac{ s_2}{n}\big|(s_2+\theta)^{H-\frac{1}{2}}-(s_1+\theta)^{H-\frac{1}{2}}\big|^2d\theta\\
A_{n}^2
	&:=\int_{0}^{\frac{nr}{2}}\Indi{\{\theta>s_2\}}\frac{\theta}{n}\big|(s_2+\theta)^{H-\frac{1}{2}}-(s_1+\theta)^{H-\frac{1}{2}}\big|^2d\theta.
\end{align*}
The term $A_{n}^2$ can be bounded by means of the inequality 
$$
\big|(s_2+\theta)^{H-\frac{1}{2}}-(s_1+\theta)^{H-\frac{1}{2}}\big|^2\leq   \left(H-\frac 12 \right)^2\theta^{2H-3}|s_2-s_1|^2,
$$
which is valid for all $s_2\leq \theta$. This gives
\begin{align}\label{eq:An2finalbound}
A_{n}^2
  &\leq C \frac{1}{n}\int_{0}^{\frac{nr}{2}}s_2^2\theta^{2H-2}d\theta\leq  Cr^{2H-1}s_2^{2}n^{2H-2}.
\end{align}
For handling $A_{n}^1$, we observe that by the change of variables  $\theta \to s_2 \theta$,   we can write
\begin{align*}
A_{n}^1
	&=\frac{ s_2^{2H+1}}{n}\int_0^1\big|(1+\theta)^{H-\frac{1}{2}}-(\frac{s_1}{s_2}+\theta)^{H-\frac{1}{2}}\big|^2d\theta.
\end{align*}
Then, we use the inequality 
\begin{align*}
\big|(1+\theta)^{H-\frac{1}{2}}-(\frac{s_1}{s_2}+\theta)^{H-\frac{1}{2}}\big|
  &\leq \Indi{\{H\geq \frac{1}{2}\}}2^{H-\frac{1}{2}} + \Indi{\{H<\frac{1}{2}\}}\theta^{H-\frac{1}{2}},
\end{align*}
in order to deduce that 
\begin{align}\label{eq:An1finalbound}
A_{n}^1
  &\leq \frac{Cs_2^{2H+1}}{n}\int_0^1
	  (1+\theta^{2H-1}) d\theta\leq Cs_2^{2H}\frac{s_2}{n}.
\end{align}
From \eqref{eq:An2finalbound} and \eqref{eq:An1finalbound} here we conclude that 
\begin{align}\label{eq:An1finalboundprime2}
\int_{0}^{\frac{nr}{2}}\bigg|\big(n^{H-\frac{1}{2}}\Delta_n(\theta)\big)^2
- \frac{\Cth^2\big((s_2+\theta)^{H-\frac{1}{2}}-(s_1+\theta)^{H-\frac{1}{2}}\big)^2}{(H-1/2)^{2}}\bigg|d\theta
  &\leq C \left( s_2^{2}   n ^{2-2H} + s_2 ^{2H+1} n^{-1} \right).
\end{align}
For handling the case $\theta\geq \frac{nr}{2}$, we use the inequality
\begin{align*}
\big|\big(r+\frac{s+u}{n}\big)^{H-\frac{1}{2}} - r^{H-\frac{1}{2}}\big|
  &\leq  \left( H-\frac 12 \right)r^{H-\frac{3}{2}}\frac{s_2}{n},
\end{align*}
as well as \eqref{eq:Deltantheta}, to conclude that 
\begin{align}\label{eq:An1finalboundprime}
\int_{\frac{nr}{2}}^{nr}\bigg|\big(n^{H-\frac{1}{2}}\Delta_n(\theta)\big)^2
- \frac{\Cth^2\big((s_2+\theta)^{H-\frac{1}{2}}-(s_1t+\theta)^{H-\frac{1}{2}}\big)^2}{(H-1/2)^{2}}\bigg|d\theta
  &\leq CA_{n}^3 ,
\end{align}
where $C$ is some constant and $A_{n}^3$ given by
\begin{align*}
A_{n}^3
	&:=\int_{\frac{rn}{2}}^{nr}\left (  r^{H-\frac 12} \left |(r-\frac{\theta}{n})^{\frac{1}{2}-H}-r^{\frac{1}{2}-H}\right|+
	 r^{H-\frac 32} \frac{ s_2}{n}(r-\frac{\theta}{n})^{\frac{1}{2}-H} \right) \\
	 & \qquad \times 
	\big|(s_2+\theta)^{H-\frac{1}{2}}-(s+\theta)^{H-\frac{1}{2}}\big|^2d\theta.
\end{align*}
By first using the inequality 
$$\big|(s_2+\theta)^{H-\frac{1}{2}}-(s_1+\theta)^{H-\frac{1}{2}}\big|\leq  C|s_2-s_1|(nr/2)^{H-\frac{3}{2}},$$
for $\theta \ge \frac {nr}2$
and then the fact that
\begin{align*}
\int_{\frac{nr}{2}}^{nr}\left (  r^{H-\frac 12} \left |(r-\frac{\theta}{n})^{\frac{1}{2}-H}-r^{\frac{1}{2}-H}\right|+
	 r^{H-\frac 32} \frac{ s_2}{n}(r-\frac{\theta}{n})^{\frac{1}{2}-H} \right) 
  &\leq Cn ,
\end{align*}
for some constant $C>0$,	we obtain 
\begin{align}\label{eq:An3finalbound}
A_{n}^3
	&\leq C n^{2H-2}|s_2-s_1|^2
	\leq C s_2^{2}n^{2H-2}.
\end{align}	
Inequality \eqref{eq:betatermvariancelimit} follows from \eqref{eq:An1finalboundprime2}, 
\eqref{eq:An1finalboundprime}, \eqref{eq:An3finalbound} and the fact that
\[
\int_{nt} ^\infty    |(\theta+s_1)^{H-\frac{1}{2}}-(\theta+s_2)^{H-\frac{1}{2}}|^2d\theta \le C s_2^2 n^{2H-2}.
\]
\end{proof}

The following lemma is used in the proof of  \eqref{eq:Psininfgoal2}.

\begin{Lemma}\label{lem:last}
For $ r_{1},r_{2},s_{2},s_{1}\geq 0$, we define the random variables 
$\beta_{r_1,\vec{s}, \eta}^{(n)},\beta_{r_2,\vec{s}, \eta}^{(n)},
\alpha_{r_1,\vec{s},\eta,\wt{\eta}}^{(n)},\alpha_{r_2,\vec{s},\eta,\wt{\eta}}^{(n)}$ as in \eqref{eq:alphadef} and \eqref{eq:beta1def}. Then, for every $\eta, \wt{\eta},\wt{\eta}\in\R$, 
\begin{align}   \notag
& \left|\E\left[ e^{-\textbf{i}(\beta_{r_1,\vec{s}, \eta}^{(n)}-\beta_{r_2,\vec{s}, \eta}^{(n)}
	+\wt{\eta} (B_{r_1,r_1+\frac{s_{2}}{n}} -\lambda)
	-\wt{\eta} (B_{r_2,r_2+\frac{s_{2}}{n}} -\lambda))}\right] \right|
\exp \left(-\frac{1}{2}(\alpha_{r_1,\vec{s},\eta,\wt{\eta}}^{(n)}
	+\alpha_{r_2,\vec{s},\eta,\wt{\eta}}^{(n)} ) \right)\\  \notag
	&  \leq  \exp \left(-\frac{1}{4}(\alpha_{r_1,\vec{s},\eta,\wt{\eta}}^{(n)}
	+\alpha_{r_2,\vec{s},\eta,\wt{\eta}}^{(n)} ) \right)\\   \notag
	&  \times  \exp \left(-\frac{1}{60}{\rm Var}[\wt{\eta}B_{r_1+\frac{s_{2}}{n}}
-\wt{\eta}B_{r_2+\frac{s_{2}}{n}}\ |\ B_{r_1+\frac{s_{2}}{n}}-B_{r_1+\frac{s_{1}}{n}},B_{r_2+\frac{s_{2}}{n}}-B_{r_2+\frac{s_{1}}{n}}] \right)\\  \notag
  &   \times \exp \left(-\frac{1}{60}{\rm Var}[n^H\eta(B_{r_{2}+\frac{s_{2}}{n}}-B_{r_2+\frac{s_{1}}{n}})\ 
|\ B_{r_{1}+\frac{s_{2}}{n}},B_{r_1+\frac{s_{1}}{n}},B_{r_2+\frac{s_{2}}{n}}] \right)\\ 
&  \times  \exp \left(-\frac{1}{60}{\rm Var}[n^H\eta(B_{r_{1}+\frac{s_{2}}{n}}-B_{r_1+\frac{s_{1}}{n}})\ 
|\ B_{r_{2}+\frac{s_{2}}{n}},B_{r_2+\frac{s_{1}}{n}},B_{r_1+\frac{s_{2}}{n}}] \right).
   \label{eq:last1}
\end{align}
and 
\begin{align}   \notag
&\left|\E\left[ e^{-\textbf{i}(\beta_{r_1,\vec{s}, \eta}^{(n)}-\beta_{r_2,\vec{s}, \eta}^{(n)}
	+\wt{\eta} (B_{r_1,r_1+\frac{s_{2}}{n}} -\lambda)
	-\wt{\eta} (B_{r_2,r_2+\frac{s_{2}}{n}} -\lambda))}\right] \right|
\exp \left(-\frac{1}{2}(\alpha_{r_1,\vec{s},\eta,\wt{\eta}}^{(n)}
	+\alpha_{r_2,\vec{s},\eta,\wt{\eta}}^{(n)} ) \right)\\  \notag
	&  \leq  \exp \left(-\frac{1}{4}(\alpha_{r_1,\vec{s},\eta,\wt{\eta}}^{(n)}
	+\alpha_{r_2,\vec{s},\eta,\wt{\eta}}^{(n)} ) \right)\\   
	&\times \exp \left(-\frac{1}{28}
	{\rm Var} \left[\wt{\eta}B_{r_1+\frac{s_{2}}{n}}
-\wt{\eta}B_{r_2+\frac{s_{2}}{n}}\ |\ B_{r_{1}+\frac{s_{2}}{n}}-B_{r_1+\frac{s_{1}}{n}},B_{r_{2}+\frac{s_{2}}{n}}-B_{r_2+\frac{s_{1}}{n}}\right]\right)\\
&\times \exp \left(-\frac{1}{28}{\rm Var}\left[n^H\eta(B_{r_{2}+\frac{s_{1}}{n}}-B_{r_1+\frac{s_{1}}{n}})\ 
|\  B_{r_1+\frac{s_{2}}{n}},B_{r_2+\frac{s_{2}}{n}} \right] \right).
	 \label{eq:last2}
\end{align}
\end{Lemma}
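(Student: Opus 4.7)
\textbf{Proof proposal for Lemma \ref{lem:last}.} Writing $Z:=\beta_{r_1,\vec s,\eta}^{(n)}-\beta_{r_2,\vec s,\eta}^{(n)}+\wt\eta(B_{r_1,r_1+\frac{s_2}{n}}-B_{r_2,r_2+\frac{s_2}{n}})$, the exponent on the left-hand side is a centered Gaussian, so the characteristic-function factor equals $\exp(-\tfrac12 \Var(Z))$. The full LHS is therefore $\exp(-\tfrac12 V)$ with $V:=\Var(Z)+\alpha^{(n)}_{r_1,\vec s,\eta,\wt\eta}+\alpha^{(n)}_{r_2,\vec s,\eta,\wt\eta}$. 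To expose a variance structure, I introduce four mutually independent centered Gaussians $\wt X_1,\wt X_2,\wt Y_1,\wt Y_2$, each independent of the fBm $B$, with $\Var(\wt X_i)=n^{2H}\eta^2\mu_{r_i,r_i+s_1/n}$ and $\Var(\wt Y_i)=(\wt\eta-n^H\eta)^2\mu_{r_i,r_i+s_2/n}$, so that their total variance is precisely $\alpha^{(n)}_{r_1}+\alpha^{(n)}_{r_2}$. Then $V=\Var(\wt Z)$ with $\wt Z:=Z+\wt X_1+\wt X_2+\wt Y_1+\wt Y_2$, and the lemma reduces to showing
\[
V\;\ge\;\tfrac12\big(\alpha^{(n)}_{r_1}+\alpha^{(n)}_{r_2}\big)+\tfrac{1}{30}\big(C_1+C_2+C_3\big)
\]
for \eqref{eq:last1} (and the analogous statement with coefficient $\tfrac{1}{14}$ for \eqref{eq:last2}), where $C_1,C_2,C_3$ are the three conditional variances on the right-hand side of the lemma.

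Since $V\ge\alpha^{(n)}_{r_1}+\alpha^{(n)}_{r_2}$ trivially, it is enough to establish $V\ge\tfrac{1}{10}C_k$ for $k=1,2,3$ and then split $V=\tfrac13V+\tfrac13V+\tfrac13V$ to collect the three contributions. For each $k$ I will use the elementary bound $\Var(\wt Z)\ge \Var(\wt Z\mid \mathcal G_k)$ for a carefully chosen sub-$\sigma$-algebra $\mathcal G_k$. The guiding identity is $B_{r_i,r_i+s_j/n}=B_{r_i+s_j/n}-D_i^{(j)}$ where $D_i^{(j)}:=B_{r_i+s_j/n}-B_{r_i,r_i+s_j/n}$ is independent of $\mathcal F_{r_i}$ with $\Var(D_i^{(j)})=\mu_{r_i,r_i+s_j/n}$; this lets me rewrite
\[
\wt Z=\wt\eta(B_{r_1+\frac{s_2}{n}}-B_{r_2+\frac{s_2}{n}})-n^H\eta\sum_{i=1}^{2}(-1)^{i-1}(B_{r_i+\frac{s_2}{n}}-B_{r_i+\frac{s_1}{n}})+R,
\]
where $R$ is a linear combination of the residuals $D_i^{(j)}$ and the independent padding $\wt X_i,\wt Y_i$, designed so that $R$ has unchanged variance when each $D_i^{(j)}$ is cancelled against the corresponding $\wt X_i$ or $\wt Y_i$.

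For $C_1$ I take $\mathcal G_1$ to be generated by the pair $(B_{r_1+\frac{s_2}{n}}-B_{r_1+\frac{s_1}{n}},B_{r_2+\frac{s_2}{n}}-B_{r_2+\frac{s_1}{n}})$: the two $n^H\eta$ terms in the displayed decomposition become $\mathcal G_1$-measurable, leaving $\wt\eta(B_{r_1+\frac{s_2}{n}}-B_{r_2+\frac{s_2}{n}})$ together with the independent part $R$, whose variance exceeds that of the projection of $\wt\eta(B_{r_1+\frac{s_2}{n}}-B_{r_2+\frac{s_2}{n}})$ onto the orthogonal complement of $\mathcal G_1$. For $C_2$ I take $\mathcal G_2=\sigma(B_{r_1+\frac{s_2}{n}},B_{r_1+\frac{s_1}{n}},B_{r_2+\frac{s_2}{n}},\wt X_1,\wt X_2,\wt Y_1,\wt Y_2)$; this pins down the $\wt\eta$ term and the increment on the $r_1$-side, while the only remaining randomness in $\wt Z$ comes from $n^H\eta(B_{r_2+\frac{s_2}{n}}-B_{r_2+\frac{s_1}{n}})$ (with $B_{r_2+\frac{s_2}{n}}$ already conditioned on). The $C_3$ bound is obtained by the symmetric choice swapping the roles of $r_1$ and $r_2$. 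In each case the conditional variance picks up exactly $C_k$ (times the square of the relevant coefficient) up to a universal constant absorbed in the factor $\tfrac{1}{10}$.

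The main technical difficulty is that the conditionings chosen for $C_2$ and $C_3$ leave live ``residuals'' $D_i^{(j)}$ inside $Z$ that do not cancel automatically with the $\wt X_i,\wt Y_i$; the purpose of the independent padding is precisely to absorb these cross-terms while paying at most an absolute constant factor in front of the target conditional variance. The argument for \eqref{eq:last2} is completely parallel but uses only two conditionings (hence the $\tfrac{1}{14}=\tfrac{2}{28}$ constant), reflecting that, in the geometric regime it is intended to handle, the increments on the $r_1$- and $r_2$-sides overlap and the natural quantity to isolate is $n^H\eta(B_{r_2+\frac{s_1}{n}}-B_{r_1+\frac{s_1}{n}})$, conditioned on $B_{r_1+\frac{s_2}{n}}$ and $B_{r_2+\frac{s_2}{n}}$.
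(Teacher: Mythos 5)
Your opening reduction is fine and coincides with the paper's starting point: both factors on the left are Gaussian exponentials, so the lemma is equivalent to $V:=\Var(Z')+\alpha^{(n)}_{r_1,\vec s,\eta,\wt\eta}+\alpha^{(n)}_{r_2,\vec s,\eta,\wt\eta}\ \ge\ \tfrac12\big(\alpha^{(n)}_{r_1,\vec s,\eta,\wt\eta}+\alpha^{(n)}_{r_2,\vec s,\eta,\wt\eta}\big)+\tfrac1{30}(C_1+C_2+C_3)$, with $Z'$ the Gaussian exponent of the characteristic function. The gap is in the step that carries all the weight, namely the lower bound of $V$ by the conditional variances $C_k$. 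Your $\wt Z=Z'+\wt X_1+\wt X_2+\wt Y_1+\wt Y_2$ involves the projections $B_{r_i,r_i+s_j/n}=\E[B_{r_i+s_j/n}\mid\Fc_{r_i}]$ and padding independent of $B$; after substituting $B_{r_i,r_i+s_j/n}=B_{r_i+s_j/n}-D_i^{(j)}$, the remainder $R$ is correlated with both the target variable and the conditioning variables, and the principle you invoke --- that $\Var(\mathrm{target}+R\mid\Gc_k)$ dominates a constant times $\Var(\mathrm{target}\mid\Gc_k)$ --- is not valid in general, since correlated remainders can cancel. Indeed the $\wt\eta D_i^{(2)}$ components of the target for $C_1$ cancel exactly against part of $R$, leaving the projected difference $\wt\eta(B_{r_1,r_1+s_2/n}-B_{r_2,r_2+s_2/n})$, while the padding that is supposed to compensate has variance proportional to $(\wt\eta-n^H\eta)^2\mu_{r_i,r_i+s_2/n}$, which vanishes when $\wt\eta=n^H\eta$ although $C_1$ need not. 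For $\Gc_2$ the situation is worse: the variables appearing in $Z'$ are $\Fc_{r_i}$-conditional expectations, which are not measurable with respect to $\sigma(B_{r_1+\frac{s_2}{n}},B_{r_1+\frac{s_1}{n}},B_{r_2+\frac{s_2}{n}})$, so the claim that this conditioning ``pins down'' the $\wt\eta$-term and the $r_1$-side increment is false. You explicitly defer exactly this point (``absorb these cross-terms \dots\ at most an absolute constant factor''), but that absorption is the content of the lemma and is not established.

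The paper handles this by a different mechanism that you may want to adopt: instead of independent padding, it adds the \emph{true} residuals $D_i^{(j)}$ with coefficients $\pm n^H\eta$ and $\pm(\wt\eta-n^H\eta)$ (whose variances are precisely the pieces of $\alpha^{(n)}_{r_1}+\alpha^{(n)}_{r_2}$), so that the total sum telescopes into a linear combination $Z$ of the genuine values $B_{r_i+s_j/n}$; then $\Var(Z)\le 5\big(\Var(Z')+\alpha^{(n)}_{r_1}+\alpha^{(n)}_{r_2}\big)=5V$ by the elementary bound for a sum of five dependent terms, and $C_k\le\Var(Z)$ because, for jointly Gaussian variables, a conditional variance is the minimum of $\Var(\mathrm{target}-\ell)$ over linear combinations $\ell$ of the conditioning variables, and $Z$ equals the target plus such a combination. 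This gives $C_k\le 5V$, which is also what your bookkeeping actually requires: with only $V\ge\tfrac1{10}C_k$, and after reserving $\tfrac12 V$ for the $\alpha$-term (your ``split $V=\tfrac13V+\tfrac13V+\tfrac13V$'' omits this reservation), you reach at best the exponent $-\tfrac14(\alpha^{(n)}_{r_1}+\alpha^{(n)}_{r_2})-\tfrac1{120}(C_1+C_2+C_3)$, not the stated $\tfrac1{60}$; the analogous issue arises for \eqref{eq:last2}. So both the key inequality and the constants need to be reworked, essentially along the lines of the telescoping-plus-projection argument.
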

\begin{proof}
First we show \eqref{eq:last1}. Define the random variables
\begin{align*}
\begin{array}{ll}
N_1
  :=n^{H}(B_{r_1,r_1+\frac{s_{2}}{n}}-B_{r_1,r_1+\frac{s_{1}}{n}})\ \ \ \ \ 
  &
N_2:=n^{H}(B_{r_2,r_2+\frac{s_{2}}{n}}-B_{r_2,r_2+\frac{s_{1}}{n}})\\
N_3
  :=n^{H}B_{r_1,r_1+\frac{s_{2}}{n}}\ \ \ \ \ 
  &
N_4:=n^{H}B_{r_2,r_2+\frac{s_{2}}{n}}\\
\wt{N}_1
  :=n^{H}(B_{r_1+\frac{s_{1}}{n}}-B_{r_1,r_1+\frac{s_{1}}{n}})\ \ \ \ \ 
  &
\wt{N}_2
:=n^{H}(B_{r_1+\frac{s_{2}}{n}}-B_{r_1,r_1+\frac{s_{2}}{n}})\\
\wt{N}_3
  :=n^{H}(B_{r_2+\frac{s_{1}}{n}}-B_{r_2,r_2+\frac{s_{1}}{n}})\ \ \ \ \ 
  &
\wt{N}_4:=n^{H}(B_{r_2+\frac{s_{2}}{n}}-B_{r_2,r_2+\frac{s_{2}}{n}}).
\end{array}
\end{align*}
Using \eqref{eq:alphadef}  and \eqref{eq:beta1def}, we can write
\begin{align}\notag
&\left|\E\left[e^{-\textbf{i}(\beta_{r_1,\vec{s}, \eta}^{(n)}-\beta_{r_2,\vec{s}, \eta}^{(n)}
	+\wt{\eta} (B_{r_1,r_1+\frac{s_{2}}{n}} -\lambda)
	-\wt{\eta} (B_{r_2,r_2+\frac{s_{2}}{n}} -\lambda))} \right]  \right | \\
	& \qquad = \exp \left(-\frac{1}{2}\text{Var}[\eta(N_1-N_2)+\wt{\eta} n^{-N}N_3-\wt{\eta} n^{-H}N_4]\right)
	\label{eq:boundformixbetaandalpha}
\end{align}
and 
\begin{align}
&\exp \left(-\frac{1}{2}\alpha_{r_1,\vec{s},\eta,\wt{\eta}}^{(n)}  \notag
	-\frac{1}{2}\alpha_{r_2,\vec{s},\eta,\wt{\eta}}^{(n)}\right) \\  \label{Var2}
  &\qquad =\exp \left(-\text{Var}[\eta\wt{N}_1]+\text{Var}[(n^{-H}\wt{\eta}-\eta)\wt{N}_2]
	+\text{Var}[\eta\wt{N}_3]+\text{Var}[(n^{-H}\wt{\eta}-\eta)\wt{N}_4]\right).
\end{align}
If we add all the random variables whose variances appear in the expressions \eqref{eq:boundformixbetaandalpha} and 
\eqref{Var2}, we obtain
\begin{align}  \notag
&
\eta(N_1-N_2)+\wt{\eta} n^{-N}N_3-\wt{\eta} n^{-H}N_4
+\eta\wt{N}_1+(n^{-H}\wt{\eta}-\eta)\wt{N}_2
	+\eta\wt{N}_3+(n^{-H}\wt{\eta}-\eta)\wt{N}_4\\   \label{Var3}
	&=\eta n^H \left( B_{r_1+ \frac {s_2}n} - B_{r_1+ \frac {s_1}n }+B_{r_2+ \frac {s_1}n }- B_{r_2+ \frac {s_2}n}  \right)
	+\wt{\eta}B_{r_1+ \frac {s_2}n}  -\wt{\eta}B_{r_2+ \frac {s_2}n}=:Z.
	\end{align}
As a consequence,
\begin{align}  \notag
\text{Var}(Z) 
  &\leq5\Big(\text{Var}[\eta\wt{N}_1]+\text{Var}[(n^{-H}\wt{\eta}-\eta)\wt{N}_2]
	+\text{Var}[\eta\wt{N}_3]\\    \notag
	&+\text{Var}[(n^{-H}\wt{\eta}-\eta)\wt{N}_4]+\text{Var}[\eta(N_1-N_2)+n^{-H}(\wt{\eta}N_3-\wt{\eta}N_4)]\Big)\\
	&=5(\alpha_{r_1,\vec{s},\eta,\wt{\eta}}^{(n)}
	+\alpha_{r_2,\vec{s},\eta,\wt{\eta}}^{(n)}+\text{Var}[\eta(N_1-N_2)+n^{-H}(\wt{\eta}N_3\wt{\eta}N_4)]).  \label{Var4}
\end{align}
The next step is to make use of the following inequalities, which are an immediate consequence of \eqref{Var3} and \eqref{Var4}:
\begin{align}      \notag
& {\rm Var}[\wt{\eta}B_{r_1+\frac{s_{2}}{n}}
-\wt{\eta}B_{r_2+\frac{s_{2}}{n}}\ |\ B_{r_{1}+\frac{s_{2}}{n}}-B_{r_1+\frac{s_{1}}{n}},B_{r_{2}+\frac{s_{2}}{n}}-B_{r_2+\frac{s_{1}}{n}}]\\ \notag
  &\leq \text{Var}[\eta(n^H(B_{r_{1}+\frac{s_{2}}{n}}-B_{r_1+\frac{s_{1}}{n}})
+n^H(B_{r_{2}+\frac{s_{1}}{n}}-B_{r_2+\frac{s_{2}}{n}})))+\wt{\eta}B_{r_1+\frac{s_{1}}{n}}
-\wt{\eta}B_{r_2+\frac{s_{1}}{n}}]\\
  &=\text{Var}(Z) \leq5(\alpha_{r_1,\vec{s},\eta,\wt{\eta}}^{(n)}
	+\alpha_{r_2,\vec{s},\eta,\wt{\eta}}^{(n)}+\text{Var}[\eta(N_1-N_2)+\wt{\eta}N_3-\wt{\eta}N_4]),  \label{eq:techpartfinallem}
\end{align}
as well as 
\begin{align}   \notag
&\text{Var}[n^H\eta(B_{r_{1}+\frac{s_{2}}{n}}-B_{r_1+\frac{s_{1}}{n}}) 
|\ B_{r_{1}+\frac{s_{2}}{n}},B_{r_2+\frac{s_{2}}{n}},B_{r_2+\frac{s_{1}}{n}}]\\   \notag
  &\leq \text{Var}[\eta(n^H(B_{r_{1}+\frac{s_{2}}{n}}-B_{r_1+\frac{s_{1}}{n}})
+n^H(B_{r_{2}+\frac{s_{1}}{n}}-B_{r_2+\frac{s_{2}}{n}})))+\wt{\eta}B_{r_1+\frac{s_{2}}{n}}
-\wt{\eta}B_{r_2+\frac{s_{2}}{n}}]\\  \label{Var5}
  &\leq5(\alpha_{r_1,\vec{s},\eta,\wt{\eta}}^{(n)}
	+\alpha_{r_2,\vec{s},\eta,\wt{\eta}}^{(n)}+\text{Var}[\eta(N_1-N_2)+\wt{\eta}N_3-\wt{\eta}N_4]),
\end{align}
and 
\begin{align}  \notag
&\text{Var}[n^H\eta(B_{r_{2}+\frac{s_{2}}{n}}-B_{r_2+\frac{s_{1}}{n}})
|\ B_{r_{1}+\frac{s_{2}}{n}},B_{r_2+\frac{s_{2}}{n}},B_{r_1+\frac{s_{1}}{n}}]\\  \notag
  &\leq \text{Var}[\eta(n^H(B_{r_{1}+\frac{s_{2}}{n}}-B_{r_1+\frac{s_{1}}{n}})
+n^H(B_{r_{2}+\frac{s_{1}}{n}}-B_{r_2+\frac{s_{2}}{n}})))+\wt{\eta}B_{r_1+\frac{s_{2}}{n}}
-\wt{\eta}B_{r_2+\frac{s_{2}}{n}}]\\  \label{Var6}
  &\leq5(\alpha_{r_1,\vec{s},\eta,\wt{\eta}}^{(n)}
	+\alpha_{r_2,\vec{s},\eta,\wt{\eta}}^{(n)}+\text{Var}[\eta(N_1-N_2)+\wt{\eta}N_3-\wt{\eta}N_4]).
\end{align}
Putting together \eqref{eq:techpartfinallem}, \eqref{Var5} and \eqref{Var6}, yields
\begin{multline}\label{eq:boundformixbetaandalpha2}
15(\alpha_{r_1,\vec{s},\eta,\wt{\eta}}^{(n)}
	+\alpha_{r_2,\vec{s},\eta,\wt{\eta}}^{(n)}+\text{Var}[\eta(N_1-N_2)+\wt{\eta}N_3-\wt{\eta}N_4])\\
\begin{aligned}
&\geq \text{Var}[\wt{\eta}B_{r_1+\frac{s_{2}}{n}}
-\wt{\eta}B_{r_2+\frac{s_{2}}{n}}\ |\ B_{r_{1}+\frac{s_{2}}{n}}-B_{r_1+\frac{s_{1}}{n}},B_{r_{2}+\frac{s_{2}}{n}}-B_{r_2+\frac{s_{1}}{n}}]\\
&+\text{Var}[n^H\eta(B_{r_{1}+\frac{s_{2}}{n}}-B_{r_1+\frac{s_{1}}{n}}) 
|\ B_{r_{1}+\frac{s_{2}}{n}},B_{r_2+\frac{s_{2}}{n}},B_{r_2+\frac{s_{1}}{n}}] \\
&+\text{Var}[n^H\eta(B_{r_{2}+\frac{s_{2}}{n}}-B_{r_2+\frac{s_{1}}{n}})\ 
|\ B_{r_{1}+\frac{s_{2}}{n}},B_{r_2+\frac{s_{2}}{n}},B_{r_1+\frac{s_{1}}{n}}].
\end{aligned}
\end{multline}
Relation \eqref{eq:last1} follows from \eqref{eq:boundformixbetaandalpha} and \eqref{eq:boundformixbetaandalpha2}.

  In order to show \eqref{eq:last2}, we observe that 
\begin{align*}
&\text{Var}[n^H\eta(B_{r_{2}+\frac{s_{1}}{n}}-B_{r_1+\frac{s_{1}}{n}})\ 
|\  B_{r_1+\frac{s_{2}}{n}},B_{r_2+\frac{s_{2}}{n}}]\\
    &\le  \text{Var}[\eta n^H(B_{r_{1}+\frac{s_{2}}{n}}-B_{r_1+\frac{s_{1}}{n}}
+B_{r_{2}+\frac{s_{1}}{n}}-B_{r_2+\frac{s_{2}}{n}})+\wt{\eta}B_{r_1+\frac{s_{2}}{n}}
-\wt{\eta}B_{r_2+\frac{s_{2}}{n}}]\\
  &\leq5(\alpha_{r_1,\vec{s},\eta,\wt{\eta}}^{(n)}
	+\alpha_{r_2,\vec{s},\eta,\wt{\eta}}^{(n)}+\text{Var}[\eta(N_1-N_2)+\wt{\eta}N_3-\wt{\eta}N_4]),
\end{align*}
Combining the previous inequality with \eqref{eq:techpartfinallem}, we obtain 
\begin{multline}\label{eq:boundformixbetaandalpha2p}
7(\alpha_{r_1,\vec{s},\eta,\wt{\eta}}^{(n)}
	+\alpha_{r_2,\vec{s},\eta,\wt{\eta}}^{(n)}+\text{Var}[\eta(N_1-N_2)+\wt{\eta}N_3-\wt{\eta}N_4])\\
\begin{aligned}
&\geq \text{Var}[\wt{\eta}B_{r_1+\frac{s_{2}}{n}}
-\wt{\eta}B_{r_2+\frac{s_{2}}{n}}\ |\ B_{r_{1}+\frac{s_{2}}{n}}-B_{r_1+\frac{s_{1}}{n}},B_{r_{2}+\frac{s_{2}}{n}}-B_{r_2+\frac{s_{1}}{n}}]\\
&+\text{Var}[n^H\eta(B_{r_{2}+\frac{s_{1}}{n}}-B_{r_1+\frac{s_{1}}{n}})\ 
|\ B_{r_{2}+\frac{s_{2}}{n}}-B_{r_{1}+\frac{s_{2}}{n}},B_{r_1+\frac{s_{2}}{n}},B_{r_2+\frac{s_{2}}{n}}].
\end{aligned}
\end{multline}
Relation \eqref{eq:last1} follows from \eqref{eq:boundformixbetaandalpha} and \eqref{eq:boundformixbetaandalpha2p}.
\end{proof}

%----------------------------------------------------------
 %=========================

The following lemma is also used in the proof of  \eqref{eq:Psininfgoal2}.

 \begin{Lemma}\label{lem:Binclimitasd}
Suppose that $H\neq\frac{1}{2}$ and let $T,t,\varepsilon>0$ be fixed. 
Then, 
for every $nT\geq s_2\geq s_1>0$, $nT\geq  v_2\geq v_1>0$ and $T\geq r_2\geq r_1+\varepsilon  \geq 2\varepsilon$, we have 
\begin{multline}\label{eq:betatermvariancelimitpprime}
n^{2H}
|\E[(B_{r_2, r_2+\frac{s_2}{n}} - B_{r_2, r_2+\frac{s_1}{n}})
(B_{r_1,r_1+\frac{v_2}{n}} - B_{r_1,r_1+\frac{v_1}{n}})]|\\
  \leq 
 C_{\varepsilon}(n^{2H-2}(s_2-s_1)(v_2-v_1) + n^{H-\frac{3}{2}}(v_2-v_1)^{H+\frac{1}{2}}(s_2-s_1)),
\end{multline}
and 
\begin{align}
n^{H}
|\E[(B_{r_2, r_2+\frac{s_2}{n}} - B_{r_2, r_2+\frac{s_1}{n}})
B_{r_1,r_1+\frac{v_1}{n}}]|
  &\leq C_{\varepsilon}n^{H-1}(s_2-s_1),\label{eq:betatermvariancelimitpprimedsdas}
\end{align}
for some constant $C_\varepsilon>0$ depending on $\varepsilon, r_1, r_2, T$ and $H$.
\end{Lemma}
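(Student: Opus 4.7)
\textbf{Proof plan for Lemma \ref{lem:Binclimitasd}.} Both estimates rest on the kernel representation $B_{r,s} = \int_0^r K_H(s,\theta)\,dW_\theta$. Since $r_1 < r_2$, It\^o isometry gives
\[
\E\big[(B_{r_2,r_2+s_2/n}-B_{r_2,r_2+s_1/n})(B_{r_1,r_1+v_2/n}-B_{r_1,r_1+v_1/n})\big] = \int_0^{r_1} \Delta_s(\theta)\Delta_v(\theta)\,d\theta,
\]
where $\Delta_s(\theta) := K_H(r_2+s_2/n,\theta)-K_H(r_2+s_1/n,\theta)$ and $\Delta_v$ is defined analogously around $r_1$. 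I plan to estimate $|\Delta_s|$ and $|\Delta_v|$ separately using the derivative formula (cf.\ \eqref{eq:derivKHts}) $\partial_t K_H(t,\theta) = \Cth\, \theta^{1/2-H}(t-\theta)^{H-3/2}t^{H-1/2}$. The key structural asymmetry is that for $\Delta_s$, the argument $t-\theta$ is bounded below by $r_2-r_1 \geq \varepsilon$ (since $t\geq r_2$ and $\theta \leq r_1$), so $(t-\theta)^{H-3/2}$ is uniformly controlled and $|\Delta_s(\theta)|\leq C_\varepsilon\, \theta^{1/2-H}(s_2-s_1)/n$. For $\Delta_v$, however, $t-\theta$ can be as small as $v_1/n$, so integration of $\partial_t K_H$ only yields
\[
|\Delta_v(\theta)| \leq \frac{C_{\varepsilon,T}}{|H-1/2|}\theta^{1/2-H}\,\big|(r_1+v_2/n-\theta)^{H-1/2}-(r_1+v_1/n-\theta)^{H-1/2}\big|.
\]

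Combining these bounds and performing the change of variable $\psi = r_1-\theta$, the first inequality reduces to the estimate
\[
I := \int_0^{r_1}(r_1-\psi)^{1-2H}\,\Phi(\psi,h)\,d\psi \leq C_\varepsilon(h + h^{H+1/2}),
\]
with $h:=(v_2-v_1)/n$ and $\Phi(\psi,h):=|(\psi+v_2/n)^{H-1/2}-(\psi+v_1/n)^{H-1/2}|$, since the prefactor $C_\varepsilon(s_2-s_1)/n$ matches the required $n^{-2}(s_2-s_1)(v_2-v_1) + n^{-H-3/2}(v_2-v_1)^{H+1/2}(s_2-s_1)$ after multiplication by $n^{2H}$. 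The case $H = 1/2$ is trivial since $K_H = \mathbf{1}_{s < t}$ forces $\Delta_s = \Delta_v = 0$ on $[0, r_1]$.

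For $H<1/2$, the weight $(r_1-\psi)^{1-2H}$ is uniformly bounded by $T^{1-2H}$. Computing $\int_0^{r_1}\Phi(\psi,h)\,d\psi$ explicitly via the antiderivative of $x^{H-1/2}$ and invoking the subadditivity $(x+h)^{H+1/2} \leq x^{H+1/2}+h^{H+1/2}$ (valid since $H+1/2\in(1/2,1)$) yields $I \leq C\, h^{H+1/2}$. For $H>1/2$, the weight is singular at $\psi = r_1$, so I split at $\psi = r_1/2$: on $[0,r_1/2]$ the weight is bounded by $(2/r_1)^{2H-1} \leq C_\varepsilon$ while $\int_0^{r_1/2}\Phi(\psi,h)\,d\psi \leq C_T h$ (since $x^{H+1/2}$ has bounded derivative on $[0,2T]$); on $[r_1/2,r_1]$ the mean value theorem applied to $x^{H-1/2}$ gives $\Phi(\psi,h)\leq C_\varepsilon h$ (as $\psi+v_i/n \geq \varepsilon/2$), while $\int_{r_1/2}^{r_1}(r_1-\psi)^{1-2H}d\psi$ is finite since $1-2H > -1$. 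This yields $I \leq C_\varepsilon h$, which again fits within the target.

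The second inequality \eqref{eq:betatermvariancelimitpprimedsdas} is simpler: starting from $|\E[\cdot]|\leq \int_0^{r_1}|\Delta_s(\theta)| K_H(r_1+v_1/n,\theta)\,d\theta$ and the bound on $|\Delta_s|$, the Cauchy--Schwarz inequality together with $\int_0^{r_1+v_1/n} K_H^2(r_1+v_1/n,\theta)\,d\theta = (r_1+v_1/n)^{2H}\leq (2T)^{2H}$ and $\int_0^{r_1}\theta^{1-2H}d\theta = r_1^{2-2H}/(2-2H) \leq T^{2-2H}/(2-2H)<\infty$ directly yields $|\E[\cdot]|\leq C_{\varepsilon,T}(s_2-s_1)/n$, giving the required bound after multiplication by $n^H$. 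The main obstacle will be the estimate of $I$ in the regime $H>1/2$, where the singularity of the weight at the right endpoint forces the dichotomy above in order to reconcile the regions where $\Phi$ is best bounded by $h$ (via MVT, away from $\psi=0$) with the region near $\psi = 0$ where only the integrated bound giving $h^{H+1/2}$-type behavior is available.
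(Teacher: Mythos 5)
Your proposal is correct, and for the main estimate \eqref{eq:betatermvariancelimitpprime} it follows essentially the paper's route: the same It\^o-isometry representation $\int_0^{r_1}\Delta_s(\theta)\Delta_v(\theta)\,d\theta$, the same use of $\partial_t K_H(t,\theta)=\Cth\,\theta^{\frac12-H}(t-\theta)^{H-\frac32}t^{H-\frac12}$, the same exploitation of $r_2-r_1\geq\varepsilon$ to extract the factor $(s_2-s_1)/n$ from the $r_2$-increment, and the same midpoint split of the remaining integral (the paper works after rescaling $\theta\to n\theta$ and splits at $nr_1/2$ in both regimes; you only need the split for $H>\frac12$, treating $H<\frac12$ globally via the antiderivative of $x^{H-\frac12}$ and the subadditivity of $x\mapsto x^{H+\frac12}$, which is exactly the paper's ingredient for producing the $(v_2-v_1)^{H+\frac12}$ term). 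Your reduction to $I\leq C_\varepsilon(h+h^{H+\frac12})$ with $h=(v_2-v_1)/n$ matches the target after multiplying back by $n^{2H-1}(s_2-s_1)$, and both halves of the dichotomy for $H>\frac12$ check out. Where you genuinely deviate is the second estimate \eqref{eq:betatermvariancelimitpprimedsdas}: the paper bounds $K_H(r_1+\frac{v_1}{n},\cdot)$ pointwise via Lemma \ref{lem:Kbounds} and runs separate computations for $H>\frac12$ and $H<\frac12$, whereas you finish in one stroke by Cauchy--Schwarz together with $\int_0^{t}K_H^2(t,\theta)\,d\theta=t^{2H}\leq(2T)^{2H}$ and $\int_0^{r_1}\theta^{1-2H}\,d\theta<\infty$; this is simpler and avoids the case analysis, at the cost of no sharper information (none is needed here). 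One immaterial slip: in your closing remark you say that near $\psi=0$ (for $H>\frac12$) the integrated bound yields $h^{H+\frac12}$-type behavior, whereas your own earlier computation correctly gives $C_T h$ there; either bound sits inside the target $C_\varepsilon(h+h^{H+\frac12})$, so nothing is affected.
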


\begin{proof}
Recall that $B_{r,s}= \int_0^r K_H(s,\theta) dW_\theta$. Define 
\begin{align}\label{eq:betatermvariancelimitpprimeH3}
\beta_{H,3}^{(n)}(\vec{s},\vec{v})
  &:=\E[(B_{r_2, r_2+\frac{s_2}{n}} - B_{r_2 ,r_2 +\frac{s}{n}})
(B_{r_1,r_1+\frac{v_2 }{n}} - B_{r_1,r_1+\frac{v_1}{n}})],
\end{align}
Notice that, since $r_1<r_2 $,
\begin{align*}
\beta_{H,3}^{(n)}(\vec{s},\vec{v})
  &=\int_0^{r_1}(K_H(r_2 +\frac{s_2}{n},\theta)-K_H(r_2 +\frac{s_1}{n},\theta))
	(K_H(r_1+\frac{v_2 }{n},\theta)-K_H(r_1+\frac{v}{n},\theta))d\theta\\
  &=\frac{1}{n}\int_0^{nr_1}
	(K_H(r_2 +\frac{s_2}{n},r_1-\frac{\theta}{n})-K_H(r_2 +\frac{s_1}{n},r_1-\frac{\theta}{n}))\\
	&\ \ \ \ \ \ \ \ \ \ \ \ \times(K_H(r_1+\frac{v_2 }{n},r_1-\frac{\theta}{n})
	-K_H(r_1+\frac{v_1}{n},r_1-\frac{\theta}{n}))d\theta.
\end{align*}
By defining 
\begin{align*}
\wt{\Delta}_n(\theta)
  &:=K_H(r_2 +\frac{s_2}{n},r-\frac{\theta}{n})-K_H(r_2 +\frac{s_1}{n},r_1-\frac{\theta}{n})\\
\widehat{\Delta}_n(\theta)
  &:=K_H(r_1+\frac{v_2 }{n},r_1-\frac{\theta}{n})
	-K_H(r_1+\frac{v_1}{n},r_1-\frac{\theta}{n}),
\end{align*}
we can write
\begin{align}\label{eq:Deltanthetaprevpsgprime}
n^{2H}\beta_{H,3}^{(n)}(\vec{s},\vec{v})
  &=\int_0^{nr_2 }(n^{H-\frac{1}{2}}\wt{\Delta}_n(\theta))(n^{H-\frac{1}{2}}\widehat{\Delta}_n(\theta))d\theta.
\end{align}
From \eqref{eq:KhdefHbig} and \eqref{eq:KhdefHbig2}, one can easily check that 
\begin{align*}
\frac{\partial}{\partial t}K_H(t,s)
  &=\Cth s^{\frac{1}{2}-H} (t-s)^{H-\frac{3}{2}}t^{H-\frac{1}{2}},
\end{align*}
which implies that the term $\widehat{\Delta}_n(\theta)$ satisfies
\begin{align*}
\widehat{\Delta}_n(\theta)
  &=\Cth(r_1-\frac{\theta}{n})^{\frac{1}{2}-H}\int_{r_1+\frac{v_1}{n}}^{r_1+\frac{v_2 }{n}}
	(u+\frac{\theta}{n}-r_1)^{H-\frac{3}{2}}u^{H-\frac{1}{2}}du\\
	&=\Cth(r-\frac{\theta}{n})^{\frac{1}{2}-H}\int_{0}^{\frac{v_2 -v_1}{n}}
	(\frac{v_1}{n}+u+\frac{\theta}{n})^{H-\frac{3}{2}}(r_1+\frac{v_1}{n}+u)^{H-\frac{1}{2}}du,
\end{align*}
so that, using the fact that $\varepsilon \le r_1 \le T$, we obtain 
\begin{align}\label{eq:Deltanthetasssprime}
n^{H-\frac{1}{2}}\widehat{\Delta}_n(\theta)
	&=\Cth(r_1-\frac{\theta}{n})^{\frac{1}{2}-H}\int_{0}^{v_2 -v_1}(v_1+u+\theta)^{H-\frac{3}{2}}
	(r_1+\frac{v_1+u}{n})^{H-\frac{1}{2}}du\nonumber\\
	&\leq C_{\varepsilon}(r-\frac{\theta}{n})^{\frac{1}{2}-H}\int_{0}^{v_2 -v_1}(v_1+u+\theta)^{H-\frac{3}{2}}du,
\end{align}
for some constant $C_\varepsilon>0$ depending also on $T$ and $H$. In particular, if $\theta\leq \frac{nr_1}{2}$,
\begin{align}\label{eq:Deltanthetasssprime2}
n^{H-\frac{1}{2}}\widehat{\Delta}_n(\theta)
	&\leq C_{\varepsilon}\int_{0}^{v_2 -v_1}(v_1+u+\theta)^{H-\frac{3}{2}}du
\end{align}
while if $\theta>\frac{nr_1}{2}$, 
\begin{align}\label{eq:Deltanthetasssprime23}
n^{H-\frac{1}{2}}\widehat{\Delta}_n(\theta)
	&\leq C_{\varepsilon}(r_1-\frac{\theta}{n})^{\frac{1}{2}-H} (v_2 -v_1)n^{H-\frac{3}{2}}.
\end{align}
\noindent On the other hand,   
\begin{align*}
\wt{\Delta}_n(\theta)
  &=\Cth(r_1-\frac{\theta}{n})^{\frac{1}{2}-H}
	\int_{r_2 +\frac{s_1}{n}}^{r_2 +\frac{s_2}{n}}(u+\frac{\theta}{n}-r_1)^{H-\frac{3}{2}}u^{H-\frac{1}{2}}du\nonumber\\
	&=\Cth(r_1-\frac{\theta}{n})^{\frac{1}{2}-H}
	\int_{0}^{\frac{s_2-s_1	}{n}}
	(r_2 -r_1+\frac{s_1}{n}+u+\frac{\theta}{n})^{H-\frac{3}{2}}(r_2 +\frac{s_1}{n}+u)^{H-\frac{1}{2}}du,
\end{align*}
so that,   using the fact that  $r_2  - r _1\ge \varepsilon$, we have
\begin{align}\label{eq:Deltanthetasdssasdprime}
n^{H-\frac{1}{2}}\wt{\Delta}_n(\theta)
	&\leq C_{\varepsilon}(r_1-\frac{\theta}{n})^{\frac{1}{2}-H}n^{H-\frac{3}{2}}(s_2-s_1),
\end{align}
for some constant $C_{\varepsilon}>0$.

 From \eqref{eq:Deltanthetasssprime2},\eqref{eq:Deltanthetasssprime23} and \eqref{eq:Deltanthetasdssasdprime} 
we conclude that there exists a constant $C_{\varepsilon}>0$, such that 
\begin{align*}
&\int_0^{nr_1}(n^{H-\frac{1}{2}}\hat{\Delta}_n(\theta)_n(\theta))(n^{H-\frac{1}{2}}\wt{\Delta}_n(\theta))d\theta
  \leq C_{\varepsilon}n^{H-\frac{3}{2}}(s_2-s_1)\int_{0}^{\frac{nr_1}{2}}\int_{0}^{v_2 -v_1}(v_1+u+\theta)^{H-\frac{3}{2}}dud\theta\\
	& \qquad \qquad  \qquad +C_{\varepsilon}n^{2H-3}(s_2-s_1)(v_2 -v_1)\int_{\frac{nr_1}{2}}^{nr_1}
	(r_1-\frac{\theta}{n})^{1-2H} d\theta \\
	& \qquad \qquad   =C_{\varepsilon} (H-\frac 12)^{-1} n^{H-\frac{3}{2}}(s_2-s_1) \int_{0}^{v_2 -v_1}   [ (v_1+u+\frac{nr}2)^{H-\frac{1}{2}}-
(v_1+u )^{H-\frac{1}{2}}]	du\\
	& \qquad \qquad  \qquad +C_{\varepsilon}  (H-\frac 12)^{-1}  n^{2H-2}(s_2-s_1)(v_2 -v_1) 
	(r/2)^{2-2H}.  
\end{align*}
If $H>\frac 12$, we use the inequality
\[
|(v_1+u+\frac{nr_1}2)^{H-\frac{1}{2}}-
(v_1+u )^{H-\frac{1}{2}} | \le Ch^{H-\frac 12},
\]
and for $H<\frac 12$, we use that
\begin{align*}
(H+\frac{1}{2})\int_{0}^{v_2 -v_1}(v_1+u)^{H-\frac{1}{2}}du
  &=v_2 ^{H+\frac{1}{2}}-v_1^{H+\frac{1}{2}}\leq (v_2 -v_1)^{H+\frac{1}{2}}
\end{align*}
In this way, we obtain
\begin{align*}
\int_0^{nr_1}(n^{H-\frac{1}{2}}\hat{\Delta}_n(\theta)_n(\theta))(n^{H-\frac{1}{2}}\wt{\Delta}_n(\theta))d\theta
  &\leq C_{\varepsilon}(n^{2H-2}(s_2-s_1)(v_2 -v_1) \\
  & \qquad + n^{H-\frac{3}{2}}(v_2 -v_1)^{H+\frac{1}{2}}(s_2-s_1)).
\end{align*}
Relation \eqref{eq:betatermvariancelimitpprime} then follows from \eqref{eq:Deltanthetaprevpsgprime}.

 In order to prove \eqref{eq:betatermvariancelimitpprimedsdas}, we proceed as in the proof of \eqref{eq:Deltanthetaprevpsgprime} 
to deduce that 
\begin{align*}
n^{H}
|\E[(B_{r_2 ,r_2 +\frac{s_2}{n}} - B_{r_2 ,r_2 +\frac{s_1}{n}})
B_{r,r+\frac{v}{n}}]|
  &=\int_0^{nr_1}(n^{H-\frac{1}{2}}\wt{\Delta}_n(\theta))(n^{-\frac{1}{2}}
	K_H(r_1+\frac{v_1}{n},r_1-\frac{\theta}{n}))
	d\theta,
\end{align*}
which by equation  \eqref{eq:Deltanthetasdssasdprime} and Lemma \ref{lem:Kbounds}, gives 
\begin{align}
&   n^{H}
|\E[(B_{r_2 ,r_2 +\frac{s_2}{n}} - B_{r_2 ,r_2 +\frac{s_1}{n}})
B_{r_1,r_1+\frac{v_1}{n}}]|
  \leq C_{\varepsilon}n^{-\frac{3}{2}}(s_2-s_1)\int_{0}^{nr_1}
	(r_1-\frac{\theta}{n})^{1-2H}| v_1+\theta|^{H-\frac{1}{2}}
	d\theta \nonumber\\
	& \qquad \qquad + C_{\varepsilon}n^{-\frac{5}{2}}(s_2-s_1)\Indi{\{H<\frac{1}{2}\}}\int_{0}^{nr_1}(r_1-\frac{\theta}{n})^{-\frac{1}{2}-H}
	| v_1+\theta|^{H+\frac{1}{2}}
	d\theta.\label{eq:nExpfixedtrm}
\end{align}
Consequently, for $H>\frac{1}{2}$, 
\begin{align*}
n^{H}
|\E[(B_{r_2 ,r_2 +\frac{s_2}{n}} - B_{r_2 ,r_2 +\frac{s_1}{n}})
B_{r_1,r_1+\frac{v_1}{n}}]|
  &\leq C_{\varepsilon}n^{H-2}(s_2-s_1)\int_{0}^{nr_1}
	(r-_1\frac{\theta}{n})^{1-2H}
	d\theta \\
	& 
	\leq C_{\varepsilon}^{\prime}n^{H-1}(s_2-s_1)
\end{align*}
In the case $H<\frac{1}{2}$,  equation \eqref{eq:nExpfixedtrm} gives% and \eqref{eq:nExpfixedtrm2}, 
\begin{align*}
&n^{H}
|\E[(B_{r_2 ,r_2 +\frac{s_2}{n}} - B_{r_2 ,r_2 +\frac{s_1}{n}})
B_{r_1,r_1+\frac{v_1}{n}}]|
  \leq C_{\varepsilon}n^{-\frac{3}{2}}(s_2-s_1)\int_{0}^{nr_1}
	| v_1+\theta|^{H-\frac{1}{2}}
	d\theta \\
	& \qquad \qquad +C_{\varepsilon}n^{-\frac{5}{2}}(s_2-s_1)\int_{0}^{nr_1}(r_1-\frac{\theta}{n})^{-\frac{1}{2}-H}
	| v_1+\theta|^{H+\frac{1}{2}}
	d\theta
\end{align*}
which leads to
\begin{align*}
n^{H}
|\E[(B_{r_2 ,r_2 +\frac{s_2}{n}} - B_{r_2 ,r_2 +\frac{s_1}{n}})
B_{r_1,r_1+\frac{v_1}{n}}]|
  &\leq C_{\varepsilon}n^{H-1}(s_2-s_1),
\end{align*}
as required.
\end{proof}

The following lemma is used both in the proof of  \eqref{eq:Psininfgoal} and in the proof of Lemma \ref{lemma1.2}.

\begin{Lemma}\label{lem:integralfourierfinite}
For every $a,b,c, \sigma>0$,  there is a constant $C$ such that
\begin{align}\label{eq:integralfinitegoal}
\int_{\R^3}
	|f(x)f(\wt{x})(1\wedge|x\eta|)^a(1\wedge|\eta \wt{x}|)^b\big||\eta|^c
	e^{ -\sigma^2\eta^2}d\eta d\vec{x}
	  &\leq \frac{C}{\sigma^{1+c}(1\vee\sigma)^{a+b}}\|f\|_{a}\|f\|_{b},
\end{align}
where, we recall that $\|f\|_{w}:=\int_{\R}|f(x)|(1+|x|^w)dx$ for $w>0$.\\

\noindent In addition, 
\begin{align}\label{eq:integralfinitegoal23}
\int_{\R^2}
	|f(x) (1\wedge|x\eta|)^a \big||\eta|^c
	e^{ -\sigma^2\eta^2}d\eta d x
	  &\leq \frac{C}{\sigma^{1+c}(1\vee\sigma)^{a}}\|f\|_{a},
\end{align}
\end{Lemma}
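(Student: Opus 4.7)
\textbf{Plan for the proof of Lemma \ref{lem:integralfourierfinite}.}
The strategy is to carry out the $\vec x$--integration first, reducing the problem to a one--dimensional Gaussian moment integral in $\eta$, and then to handle that integral separately on the regimes $\sigma\le 1$ and $\sigma\ge 1$.

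\textbf{Step 1: pointwise estimate in $x$.} The backbone of the argument is the auxiliary inequality
\[
\int_{\R}|f(x)|(1\wedge|x\eta|)^{a}dx\;\leq\;\|f\|_{a}\,(1\wedge|\eta|)^{a},\qquad \eta\in\R,
\]
valid for every $a\ge 0$ and every measurable $f$ with $\|f\|_a<\infty$. This is immediate by inspecting two cases. If $|\eta|\leq1$ then $(1\wedge|x\eta|)^{a}\leq|x\eta|^{a}=|\eta|^{a}|x|^{a}$, so the integral is at most $|\eta|^{a}\int|f(x)||x|^{a}dx\leq|\eta|^{a}\|f\|_{a}$, and $(1\wedge|\eta|)^{a}=|\eta|^{a}$. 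If $|\eta|>1$ then $(1\wedge|x\eta|)^{a}\leq1$, the integral is at most $\int|f(x)|dx\leq\|f\|_{a}$, and $(1\wedge|\eta|)^{a}=1$.

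\textbf{Step 2: reduction to a Gaussian integral in $\eta$.} Applying the estimate of Step 1 to the $x$-variable with exponent $a$ and to the $\wt{x}$-variable with exponent $b$, and then invoking Fubini's theorem, the left-hand side of \eqref{eq:integralfinitegoal} is majorized by
\[
\|f\|_{a}\|f\|_{b}\,\int_{\R}(1\wedge|\eta|)^{a+b}|\eta|^{c}\,e^{-\sigma^{2}\eta^{2}}\,d\eta\;=:\;\|f\|_{a}\|f\|_{b}\,I(\sigma).
\]
The second inequality \eqref{eq:integralfinitegoal23} is obtained in exactly the same fashion, taking $b=0$ and performing no integration against $\wt{x}$.

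\textbf{Step 3: control of $I(\sigma)$ in the two regimes of $\sigma$.}
If $\sigma\geq1$, I use the crude bound $(1\wedge|\eta|)^{a+b}\leq|\eta|^{a+b}$ and the change of variables $u=\sigma\eta$ to get
\[
I(\sigma)\;\leq\;\int_{\R}|\eta|^{a+b+c}e^{-\sigma^{2}\eta^{2}}d\eta\;=\;\frac{C}{\sigma^{a+b+c+1}}\;=\;\frac{C}{\sigma^{c+1}(1\vee\sigma)^{a+b}}.
\]
If $\sigma\leq1$, I instead use $(1\wedge|\eta|)^{a+b}\leq1$ and the same change of variables to obtain
\[
I(\sigma)\;\leq\;\int_{\R}|\eta|^{c}e^{-\sigma^{2}\eta^{2}}d\eta\;=\;\frac{C}{\sigma^{c+1}}\;=\;\frac{C}{\sigma^{c+1}(1\vee\sigma)^{a+b}},
\]
since $1\vee\sigma=1$ in this regime. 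Combining these two bounds with Step 2 yields both \eqref{eq:integralfinitegoal} and \eqref{eq:integralfinitegoal23}.

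The argument is of an elementary nature, and I do not anticipate any real obstacle. The only point one has to be slightly careful about is the splitting into the two regimes of $\sigma$, which is what produces the factor $(1\vee\sigma)^{a+b}$ in the denominator rather than $\sigma^{a+b}$; both sides of this dichotomy are matched by the corresponding sides of the two-case estimate on $(1\wedge|\eta|)^{a+b}$.
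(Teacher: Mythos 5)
Your proof is correct and follows essentially the same elementary route as the paper: a dichotomy on $\sigma\le 1$ versus $\sigma\ge 1$, bounding $1\wedge|x\eta|$ either by $|x\eta|$ or by $1$, and the scaling $u=\sigma\eta$ to produce the factor $\sigma^{-(1+c)}(1\vee\sigma)^{-(a+b)}$ together with $\|f\|_a\|f\|_b$. The only difference is cosmetic bookkeeping — you integrate out $x,\wt x$ first via the pointwise estimate $\int|f(x)|(1\wedge|x\eta|)^a dx\le\|f\|_a(1\wedge|\eta|)^a$, whereas the paper rescales first and applies the same two-case bound inside the full triple integral.
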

\begin{proof}
Define 
$$A:=\int_{\R^3}
	|f(x)f(\wt{x})\big|(1\wedge|x\eta|)^a(1\wedge|\eta \wt{x}|)^b|\eta|^c
	e^{- \sigma^2\eta^2}d\eta d\vec{x}.$$
By first making the change of variables $y=\sigma\eta$, we get 
\begin{align*}
A
  &=\sigma^{-1-c}\int_{\R^3}
	|f(x)f(\wt{x})\big|(1\wedge|yx /\sigma|)^a(1\wedge|y\wt{x}/\sigma|)^b|y|^c
	e^{- y^2}dy d\vec{x}.
\end{align*}
 If $\sigma\geq 1$, we bound the terms 
$1\wedge|yx /\sigma|$ and $1\wedge|y\wt{x}/\sigma|$ by $|yx /\sigma|$ and $|y\wt{x}/\sigma|$ respectively, which gives 
\begin{align}\label{eq:integralfinitegoal2}
A
  &\leq \frac{1}{\sigma^{1+a+b+c}}\int_{\R^3}|y|^{a+b+c}
	|x|^a|\wt{x}|^b|f(x)f(\wt{x})|e^{-y^2}d\eta d\vec{x} \notag \\
	&\leq \frac{C}{\sigma^{1+a+b+c	}}\|f\|_{a}\|f\|_{b}\int_{\R^3}|y|^{a+b+c}	e^{-y^2}d\eta.
\end{align}
If $\sigma\leq 1$, we bound the terms 
$1\wedge|yx /\sigma|$ and $1\wedge|y\wt{x}/\sigma|$ by one, to obtain
\begin{align}\label{eq:integralfinitegoal3}
A
  &\leq \frac{C}{\sigma^{1+c}}\int_{\R^3}|y|^c
	\big|f(x)f(\wt{x})\big|
	e^{-y^2}dy d\vec{x}\leq \frac{C}{\sigma^{1+c}}\|f\|_{a}\|f\|_{b}\int_{\R}|y|^c	e^{-y^2}dy.
\end{align}
Relation \eqref{eq:integralfinitegoal} follows from \eqref{eq:integralfinitegoal2} and \eqref{eq:integralfinitegoal3}. The proof of 
\eqref{eq:integralfinitegoal23} is proved in a similar way.

\end{proof}

To prove  \eqref{eq:StepIVgoal} we need the following lemma.

\begin{Lemma} \label{lem1}
For any $H>\frac 13$, we have
\[
I:= \int_0^\infty \int_0^\infty (s_1s_2)^{H-\frac{1}{2}}(s_1^{2H}+|s_2-s_1| ^{2H})^{-\frac{3}{2}} 
	(1 \vee \sqrt{s_1^{2H}+|s_2-s_1|^{2H}})^{-2}d\vec{s}<\infty.
	\]
 \end{Lemma}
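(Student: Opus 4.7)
The plan is to split the region of integration into the two natural pieces
\[
R_a := \{(s_1,s_2)\in\R_+^2 : s_2 \le 2s_1\}, \qquad R_b := \{(s_1,s_2)\in\R_+^2 : s_2 > 2s_1\},
\]
and to estimate each integral separately, using the condition $H>\frac13$ only at the very end to secure integrability at infinity.

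On the region $R_b$, one has $|s_2-s_1|\ge s_2/2$, hence
\[
s_1^{2H}+|s_2-s_1|^{2H} \ge (s_2/2)^{2H},
\]
so that the integrand is bounded by $C\,s_1^{H-\frac12} s_2^{-2H-\frac12}(1\vee s_2^H)^{-2}$. Integrating $s_1$ over $(0,s_2/2)$ gives a factor of order $s_2^{H+\frac12}$, and the remaining integral in $s_2$ becomes
\[
\int_0^\infty s_2^{-H}\,(1\vee s_2^H)^{-2}\,ds_2 = \int_0^1 s_2^{-H}\,ds_2 + \int_1^\infty s_2^{-3H}\,ds_2,
\]
which is finite precisely because $H<1$ and $H>\frac13$.

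On the region $R_a$, the substitution $s_2 = s_1 t$ (with $t\in(0,2]$ and $ds_2=s_1\,dt$) yields
\[
\iint_{R_a}(\cdots)\,d\vec s \;=\; \int_0^2 t^{H-\frac12}\,(1+|t-1|^{2H})^{-\frac32}\,\Phi(t)\,dt\,,
\]
where
\[
\Phi(t):=\int_0^\infty s_1^{-H}\Bigl(1\vee\bigl(s_1^{H}\sqrt{1+|t-1|^{2H}}\bigr)\Bigr)^{-2}\,ds_1.
\]
Since $|t-1|\le 1$ on $R_a$, the quantity $\sqrt{1+|t-1|^{2H}}$ stays between two positive constants, so $\Phi(t)$ is uniformly bounded by
\[
C\left(\int_0^1 s_1^{-H}\,ds_1 + \int_1^\infty s_1^{-3H}\,ds_1\right),
\]
which is finite again exactly when $H>\frac13$ (the factor $s_1^{-2H}$ from $(1\vee s_1^H)^{-2}$ is what makes the tail decay). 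The remaining integral in $t$ is finite since $H-\frac12>-1$ and the integrand is bounded on $(0,2]$.

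No step is really delicate; the main point is merely to check that both changes of variables (peeling off $s_1$ in $R_b$ and rescaling by $s_2=s_1 t$ in $R_a$) produce, up to harmless bounded factors, a single one-variable integral of the form $\int_0^\infty s^{-H}(1\vee s^H)^{-2}\,ds$, whose convergence at infinity is controlled precisely by the condition $H>\frac13$.
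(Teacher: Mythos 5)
Your proof is correct, but it follows a somewhat different route from the paper. The paper passes to the difference variable $y=s_2-s_1$ (with $s=s_1$), bounds the factor $(s+y)^{H-\frac12}$ by $s^{H-\frac12}+y^{H-\frac12}$ when $H>\frac12$ and by $(\max(s,y))^{H-\frac12}$ when $H<\frac12$, and then integrates out $s$ to land on the one-dimensional integral $\int_0^\infty y^{-H}(1\vee y^{H})^{-2}\,dy$, finite exactly when $\frac13<H<1$. You instead split the quadrant into the near-diagonal region $\{s_2\le 2s_1\}$, where the scaling substitution $s_2=s_1t$ factorizes the integral (the quantity $\sqrt{1+|t-1|^{2H}}$ staying between $1$ and $\sqrt{2}$), and the off-diagonal region $\{s_2>2s_1\}$, where $|s_2-s_1|\ge s_2/2$ lets you dominate the integrand by powers of $s_2$; both pieces again reduce to $\int_0^\infty u^{-H}(1\vee u^{H})^{-2}\,du$, so the condition $H>\frac13$ enters in exactly the same way. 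What your version buys is a cleaner bookkeeping: no case distinction between $H>\frac12$ and $H<\frac12$, and the full domain (including $s_2<s_1$, which the paper's displayed bound treats only implicitly) is covered explicitly; the paper's computation is shorter but terser. One cosmetic remark: in the final step on $R_a$ the factor $t^{H-\frac12}$ is not bounded near $t=0$ when $H<\frac12$; what you actually need (and do state) is $H-\frac12>-1$, so that $\int_0^2 t^{H-\frac12}\,dt<\infty$, while the remaining factor $(1+|t-1|^{2H})^{-\frac32}\le 1$ — so the conclusion stands.
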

 
 \begin{proof} 
 We can write, making the change of variable $s_2-s_1=y$ and $s_1=s$, 
\[
 I\le   \int_0^\infty \int_0^{\infty}  s ^{H-\frac{1}{2}}   (s+y) ^{H-\frac 12}  (s^{2H}+y^{2H})^{-\frac{3}{2}} 
	(1 \vee \sqrt{s^{2H}+y^{2H}})^{-2}dsdy.
	\]
For  $H>\frac 12$ we use the estimate  $(s+y) ^{H-\frac 12} \le s^{H-\frac 12}+ y^{H-\frac 12}  $ and for $H<\frac 12$ we write
$(s+y) ^{H-\frac 12} \le  [\max(s,y)]^{H-\frac 12}$. In this way we obtain
\begin{align*}
 I_1& \le  \int_0^\infty \int_0^y  s ^{2H-1}    \left( \Indi{\{H>\frac 12\}} s^{H-\frac 12} + y^{H-\frac 12} \right)   y^{-3H} (1\vee y^H)^{-2}
dsdy   \\
& \le C   \int_0^\infty   y^{-H} (1\vee y^H)^{-2} dy<\infty.
\end{align*}
 \end{proof}

\noindent \textbf{Acknowledgments:}\\
Arturo Jaramillo Gil was supported by the FNR grant R-AGR-3410-12-Z (MISSILe) at Luxembourg.
Ivan Nourdin was supported by the FNR grant APOGee (R-AGR-3585-10) at Luxembourg University.
David Nualart was supported by the NSF grant DMS 2054735.
G. Peccati was supported by the FNR grant FoRGES (R-AGR-3376-10) at Luxembourg University.

%\printbibliography

\bibliographystyle{plain}
\bibliography{bibliography}

\end{document}